\newtheorem{theorem}{Theorem}[section]
\newtheorem{corollary}[theorem]{Corollary}
\newtheorem{lemma}[theorem]{Lemma}
\newtheorem{proposition}[theorem]{Proposition}
\theoremstyle{definition}
\newtheorem{remark}[theorem]{Remark}
\begin{document}



\title{A general model for time-minimizing navigation  on~a~mountain slope under gravity}

\author{Nicoleta Aldea$^1$}
\author{Piotr Kopacz$^{2}$}



\affil{\small{$^1$Transilvania University of Bra\c{s}ov, Faculty of Mathematics and Computer Science\\ Iuliu Maniu 50, Bra\c{s}ov, Romania}}
\affil{$^2$Gdynia Maritime University, Faculty of Navigation \\  Al. Jana Paw{\l}a II 3, 81-345 Gdynia, Poland}

\date{}
\date{{\normalsize {\small {e-mail:} \texttt{codruta.aldea@unitbv.ro, p.kopacz@wn.umg.edu.pl}}}}
\maketitle

\begin{abstract} 

In this work, 
we solve the generalized Matsumoto's slope-of-a-mountain problem by means of Riemann-Finsler geometry, making close links with the Zermelo navigation problem. The time-minimizing navigation under gravity is analyzed in  the general model of a slippery mountain slope  being a Riemannian manifold. Both the transverse and longitudinal gravity-additives with respect to the direction of motion are admitted to vary simultaneously in the full ranges, showing the impact of cross- and along-traction  on the slippery slope.  By the anisotropic deformation of the background Riemannian metric and rigid translation with the use of the rescaled gravitational wind, we obtain the purely geometric solution for optimal navigation, which is given by a new Finsler metric belonging to the class of general $(\alpha, \beta)$-metrics. The related strong convexity conditions are established and time geodesics are described. Moreover, the evolution of time fronts and the behavior of  time-minimizing trajectories in relation to various gravity effects on the slippery slope, gravitational wind force and direction of motion  are thoroughly discussed and visualized by several two-dimensional examples.  
\end{abstract}


\bigskip \noindent \textbf{MSC 2020}: 53B40, 53C60.

\smallskip \noindent \textbf{Keywords:} Riemann-Finsler manifold, Time geodesic, Matsumoto's slope-of-a-mountain problem, Zermelo's navigation problem, Slippery slope metric, Gravity.



\section{Introduction}
The presented study focuses on finding optimal paths in the sense of time (time geodesics) in the general model of a slippery mountain slope  under action of gravity by means of Riemann-Finsler geometry. 

\subsection{Background of the study} 

We start by recalling the slope-of-a-mountain problem of Matsumoto (MAT for brevity) which was initially investigated in a purely geometric way in \cite{matsumoto}, where the author assumed that the slope was not slippery at all in the usual sense. The main objective was to find the time-minimizing paths  on the mountain side under the influence of gravity.  Such setting implied that the self-velocity (the control vector) $u$ of a walker or a moving craft on the slope and the corresponding resultant velocity $v$ always point in the same directions. In addition, the related  speeds differ from each other by the entire along-gravity additive, being the norm of the orthogonal projection of the  component of gravity\footnote{To be precise, the velocity induced on the walker by the gravitational force.},  
 tangent  to a slope on $u$, for any direction of motion. As is natural, there is higher resultant speed obtained in a downhill motion than in an uphill climbing, while  the self-speed of a walker or a craft on the slope is kept maximal and constant.  Thus, there is no drift (sliding) to any side taken into account. In other words, the cross component of the   gravitational force pushing a walker off the $u$-track on the slope is always fully compensated (cancelled) in Matsumoto's  model \cite{matsumoto}. Although somewhat simplified, this basic setting yields a quite natural  description of motion on the hill side for some applications,  where upward walking is more strenuous than going downhill.  It is worth mentioning that a Matsumoto metric has also been applied to a geometric description of the wildfire spread structure  \cite{markvorsen,JPS}.

Recently, a more general approach in the context of time-minimizing solutions has been presented in \cite{slippery}, describing a slippery slope model that admits  the side drifts. This time the velocities $u$ and $v$  are not collinear in general whilst on the move,  pointing  in different directions. In that study (SLIPPERY for short) a \textit{cross-traction coefficient} $\eta\in[0, 1]$ was introduced in particular, by which the transverse  effect (i.e. to the left or right side of the velocity $u$) of the gravitational force acting on a mountain slope was determined. Such reasoning enabled studying the Matsumoto's slope-of-a-mountain problem somewhat in the spirit of the Zermelo navigation problem (ZNP for short) on a Riemannian manifold $(M, h)$ in Finsler geometry as in, e.g. \cite{chern_shen,colleenshen}. Namely, this involved  a \textit{gravitational wind}, i.e. the component\footnote{A gravitational field $\mathbf{G}=\mathbf{G}^{T}+\mathbf{G}^{\perp}$, where $\mathbf{G}^{T}$  is tangent to a  mountain slope $M$ and acts in the steepest downhill direction, considering a 2-dimensional model of the slope, and $\mathbf{G}^{\perp }$ is normal to $M$.} $\mathbf{G}^{T}$ of gravity $\mathbf{G}$, which is tangent to $M$ and ``blows'' in the steepest downhill  direction (along the negative gradient).  We recall briefly that ZNP classically in low-dimensional formulations is to find the least time trajectories of a craft which moves at maximum speed with respect to a surrounding air or water between two positions at sea, on the river or in the air, where the water stream or wind modeled by a variable vector field is always taken into account entirely \cite{Zer}.  

It may be worth reminding the reader from the previous studies (\cite{slippery, cross, slipperyx}) 
 that a part of a gravitational wind $\mathbf{G}^{T}$ is compensated due to nonzero traction during motion on a mountain slope. The cancelled part is named a \textit{dead wind} which has actually no impact on time-minimizing paths. In turn, the remaining part we call an \textit{active wind} and denote by $\mathbf{G}_\eta$ in the slippery slope model (SLIPPERY). This plays an essential role in all  investigations on the slope navigation, being present in the equations of motion.  
 Moreover, the active wind decomposes next into two components, i.e. its orthogonal projections onto $u$ (called an \textit{effective wind}) and on the orthogonal direction to $u$ (called a \textit{cross wind}). Thus, for any given direction of motion and acceleration of gravity in the slippery slope model the sum of the maximum cross wind and maximum effective wind yields the entire  gravitational wind $\mathbf{G}^{T}$. For the sake of clarity, see \cref{fig_slope_general} in the current context.

In the SLIPPERY scenario, the effective wind always acts in full, for each direction of motion and wind force $||\mathbf{G}^{T}||_h$. At the same time, the cross wind is subject to compensation due to cross-traction on the slippery slope. 
  More precisely, the corresponding equation of motion reads
\begin{equation}
v_\eta=u+\mathbf{G}_\eta, 
\label{eqs_slippery}
\end{equation}
where  $\mathbf{G}_{{\eta}}=(1-\eta)\text{Proj}_{u^{\perp}}\mathbf{G}^{T}+\text{Proj}_{u}\mathbf{G}^{T}$, or  equivalently, $\mathbf{G}_{{\eta}}=\eta \mathbf{G}_{MAT}+(1-\eta )\mathbf{G}^{T}, \eta \in[0, 1]$, where  $u^\perp$ denotes the orthogonal direction to $u$ and $\mathbf{G}_{MAT}$ stands for $\text{Proj}_{u}\mathbf{G}^{T}$.  

Hence, in the boundary cases, the original Matsumoto  problem   on the non-slippery slope (in the usual sense) and the Zermelo navigation problem (\cite{Zer, colleenshen}) under a gravitational wind $\mathbf{G}^{T}$ are linked and generalized. Both become the particluar cases in the new setting, i.e. with $\eta=1$ (no lateral drift, MAT) and $\eta=0$ (maximum\footnote{For any given direction of motion indicated by $u$ and gravitational wind force $||\mathbf{G}^{T}||_{h}$.} lateral drift, ZNP), respectively.

 Furthermore, to complement the exposition including the sliding effect, analogous model to the aforementioned has been considered very recently, where the longitudinal drift with respect to the direction of motion indicated by the velocity $u$  was taken into account in the corresponding equations of motion (S-CROSS for short); see \cite{slipperyx} for more details. In this case, in turn, an \textit{along-traction coefficient} $\tilde{\eta}\in[0, 1]$ describes the range of another type of sliding, while the cross-gravity increment is always taken in full, i.e. $\eta=0$, for any direction and gravity force. In this way it was possible to create a direct connection between the cross slope problem (CROSS for short, $\tilde{\eta}=1$) investigated  in \cite{cross} and the classical Zermelo's  navigation ($\tilde{\eta}=0$) again. 
The resultant velocity in this case is defined as follows
\begin{equation}
v_{\tilde{\eta}}=u+\mathbf{G}_{\tilde{\eta}},   
\label{eqs_slipperyx}
\end{equation}

\ 

\noindent where  $\mathbf{G}_{\tilde{\eta}}=\text{Proj}_{u^{\perp}}\mathbf{G}^{T}+(1-\tilde{\eta})\text{Proj}_{u}\mathbf{G}^{T}$, which is equivalent to $\mathbf{G}_{\tilde{\eta}}=-\tilde{\eta}\mathbf{G}_{MAT}+\mathbf{G}^{T}, \tilde{\eta} \in[0, 1]$.    
For the sake of clarity, see also \cref{fig_slope_general} in this regard. 
It is worth noting that ZNP  is positioned  somewhat right in the middle between both above approaches to modeling the slippery slopes  pieced together, namely, SLIPPERY  with scaling the lateral  drift (\cite{slippery}) and S-CROSS with the longitudinal compensation of the gravity impact on time-optimal motion on the mountain slope (\cite{slipperyx}). 

Both introduced parameters settle  the type and range of compensation of the gravitational wind, however, varying individually, i.e. $\eta$ across the $u$-direction as considered in \cite{slippery} and $\tilde{\eta}$ along the $u$-direction  as studied in \cite{slipperyx}. Those compensations determine next the transverse and longitudinal gravity-additives (slides) to self-motion. In the current investigation, we aim at analyzing the general case, admitting arbitrary type of a slide during the least time navigation on the slippery slope.  
  This means that both traction coefficients $\eta, \tilde{\eta}$ will be incorporated in the general equations of motion.  As a consequence, this study will generalize and collect the preceding results on time-optimal navigation under the action of gravity, which were obtained with a purely geometric approach by means of Finsler geometry, in particular in  \cite{slippery, cross, slipperyx}. Moreover, the \textit{slippery slope} will gain now a much broader meaning in the context of modeling time-minimizing motion on the hill side, as explained in the next subsection. The essential part of the study will  refer to the strong convexity conditions, which correspond to the geometrically expressed conditions for optimality in the sense of time.

\subsection{Defining navigation problems on a mountain slope with the use of traction coefficients}

Let us observe that by a pair of the traction coefficients it is possible to define in fact each  \textit{navigation problem} $\mathcal{P}$ in the slope model under action of gravity above mentioned, namely, $\mathcal{P}_{\eta, \tilde{\eta}}=(\eta, \tilde{\eta})$, where both parameters are fixed\footnote{In general, the notation with both lower indices, i.e. $\mathcal{P}_{\eta \tilde{\eta}}=(\eta, \tilde{\eta})$ will be used  especially for the slope problems that have not been specifically named or abbreviated like, for example, MAT or ZNP.}. Thus, we have  $\mathcal{P}_{MAT}=(1, 0)$,  $\mathcal{P}_{ZNP}=(0, 0)$, $\mathcal{P}_{CROSS}=(0, 1)$ and $\mathcal{P}_{RIEM}=(1, 1)$ yields the Riemannian case, where the impact of gravity on  motion is completely cancelled, i.e. $v=u$. Furthermore,  our objective is to present the general solution including all scenarios with the full ranges of both traction coefficients taken into account together, i.e. $\eta, \tilde{\eta}\in[0, 1]$, and not just the boundary  values ($\eta, \tilde{\eta}\in\{0, 1\}$) as has been studied so far (\cite{matsumoto, Zer, colleenshen,cross}). This will lead to the new concrete problems on the slope like, for instance, $\mathcal{P}=(\frac12, \frac13)$ or $\mathcal{P}'=(\frac{\pi}{5}, \sqrt{0.7})$, which in general have not been considered before. Consequently, any such setting will yield different type of motion on the slope, determined by given tractions. Then for any $\mathcal{P}_{\eta, \tilde{\eta}}$,  the specific study leading to the time-optimal paths can be developed effectively, creating the corresponding model for a potential application based on the arbitrary $(\eta, \tilde{\eta})$-navigation under the influence of gravity.  

Moreover, in case at least one of the traction coefficients is varying (this means that the cross- or along-gravity additive is variable) as in SLIPPERY or S-CROSS so far, we denote by $\mathcal{T}_{\mathcal{P}}^{\mathcal{P}'}$ (interchangeably $\mathcal{T}_{\eta,\tilde{\eta}}^{\eta',\tilde{\eta}'})$ a \textit{transition} $\mathcal{T}$ between two specific slippery slope problems $\mathcal{P}=(\eta, \tilde{\eta})$ and $\mathcal{P}'=(\eta', \tilde{\eta}')$. For clarity, this is illustrated in \cref{fig_square2}.  Hence, $\mathcal{T}_{1,0}^{0,0}$ now describes SLIPPERY linking MAT and ZNP, with $\eta\in[0, 1]$ and $\tilde{\eta}=0$, as well as $\mathcal{T}_{0,1}^{0,0}$ represents S-CROSS linking CROSS and ZNP,  with $\eta=0$ and $\tilde{\eta}\in[0, 1]$ (\cite{slippery,slipperyx}). In such a way we can collect, compare and present all the above mentioned scenarios graphically in a clear and unified manner with the bird's eye view on the problem diagram in \cref{fig_square_0}. 
More generally, we aim at covering the cases in our solution, where the traction coefficients are running through the arbitrary intervals $I_\eta, I_{\tilde{\eta}} \subseteq [0, 1]$ as well as the transitions $\mathcal{T}_{\mathcal{P}}^{\mathcal{P}'}$ that also connect the new type of problems  $\mathcal{P}_{\eta, \tilde{\eta}}$ as above mentioned\footnote{Each fixed pair $(\eta,\tilde{\eta})$ yields a specific type of motion related to $\mathcal{P}_{\eta, \tilde{\eta}}$. In turn, the equation of motion are changing during transition. There is a certain analogy to a flight of a variable-sweep wing aircraft (a swing-wing design), modifying its geometry while flying.}, e.g. $\mathcal{T}_{\frac12, \frac13}^{\frac{\pi}{5}, \sqrt{0.7}}$. 
 
To sum up in short, making use of the traction coefficients has an advantage. Gathering the preceding investigations related to various types of navigation problems on a mountain slope under various gravity effects, we can present them conveniently and compare to each other in a clear way by the scaling parameters taken as the unified coordinates 
 $(\eta, \tilde{\eta})$ as illustrated in \cref{fig_square_0}. Actually, the figure also shows the state of the art in modeling time-optimal navigation on a slope of mountain under the action of gravity studied thus far. 

 \begin{figure}[H]
\centering
\includegraphics[width=0.45\textwidth]{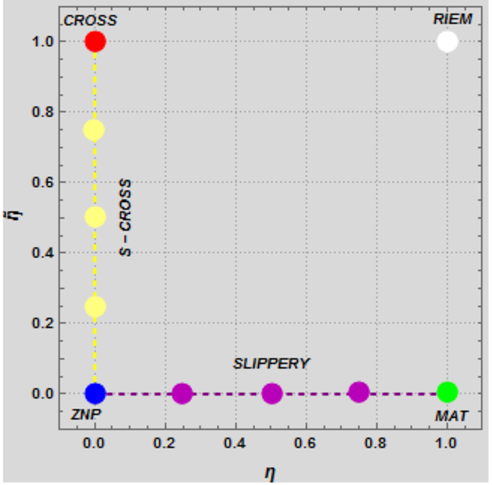}
\caption{The comparative presentation of the specific slope problems defined with the use of the traction coefficients $\eta, \tilde{\eta}$ on the problem diagram which have been studied in Riemann-Finsler geometry thus far.}
\label{fig_square_0}
\end{figure}

It is worth pointing out the meaning of  ''slippery''  and  ''non-slippery'' slope in the current context. As already emphasized, the initial Matsumoto's setting (\cite{matsumoto}) was treated naturally
as the non-slippery model in a usual sense, since the cross action of
the gravity force pushing a runner or craft on a slope to the sides (left or
right) off the $u$-track is cancelled entirely, while the longitudinal
effect is taken as it comes, i.e increasing the forward speed when going
downhill or decreasing it when climbing uphill. Roughly speaking, it is
often adopted in the interpretations of various real world applications that
the cross gravity effect is treated as somewhat ``unwanted'' or disturbing
self-motion (forward) indicated by the velocity $u$, while the along one is
fully accepted. However, there are the situations in nature so that the
approach can be exact opposite. It seems appropriate to mention the animals
that move sideways, e.g. a sidewinder rattlesnake on a desert slope as well
as the linear transverse ship's sliding motion side-to-side called sway on a
dynamic surface of the sea in marine engineering or the description of the
algebraic pedal curves and surfaces in geometry; see, e.g. \cite{cross,
slipperyx, Sabau_pedal}. In our study, both types of
gravity effects will be treated like having the same essence, without distingusihing which non-compensated gravity-additive (called a \textit{slide} in our model) is ``better'' or ``worse''. This means that we can be dragged off
forward-backward and sideways equally well on the slippery slope in the general model proposed.

Summarizing, the hitherto notion of a \textit{slippery slope (problem)} 
is taking much broader meaning in comparison to the preceding studies (\cite{slippery, slipperyx}). From
now on, each $\mathcal{P}=(\eta ,\tilde{\eta}), \eta, \tilde{\eta}\in[0, 1]$  
defines in fact a different and specific navigation problem, where at least
one type of gravity-increment (sliding) occurs. From such point of view only
the Riemannian case, where the impact of gravitational wind is cancelled
completely, i.e. $\eta =\tilde{\eta}=1$ represents the non-slippery
slope in the presence of gravity. Moreover, the current approach yields that
even the slope in the classical Matsumoto model is considered as being
slippery because of the longitudinal \textquotedblleft
slide\textquotedblright\ (i.e. the along-gravity gain) admitted, although it is construed as non-slippery at all in the usual
sense.

Remark that in order to avoid any confusion with the first slippery slope
model linking MAT and ZNP, i.e. $\eta \in \lbrack 0,1],$ $\tilde{\eta}=0$
which was introduced as a \textquotedblleft slippery
slope\textquotedblright\ in \cite{slippery} (called SLIPPERY herein) and its
natural generalization in the current study, we will slightly rename this
particular case now as a \textit{standard slippery slope}, referring to its meaning in the usual
sense and to be in agreement with our previous terminology.

\subsection{Problem formulation and main theorems}

We can now formulate the main task to which the rest of this article is
dedicated. Namely, the problem of time-minimizing navigation on a slippery
mountain slope under the action of gravity is posed in the following way:

\begin{quote}
\begin{itshape}
\noindent {Suppose a walker, craft or a vehicle has a certain constant maximum speed as measured on a horizontal plane, while gravity acts perpendicular to this plane.  Imagine now that the craft endeavours to move  on a slippery slope of a mountain under gravity, admitting a traction-dependent sliding in arbitrary (downward) direction. 
 What path should be followed by the craft to get from one point to another in the least time?}
\end{itshape}
\end{quote}

In the general context of an $n$-dimensional Riemannian manifold with $\mathbf{G}^{T}=-\bar{g}\omega ^{\sharp }$, where $\omega ^{\sharp }$ is the
gradient vector field and $\bar{g}$ is the rescaled gravitational
acceleration $g$ (see \cref{model} and \cite{slippery,Nicprw,cross,slipperyx}), we consider the active wind $\mathbf{G}_{\eta \tilde{\eta}}$ which represents the impact of gravity that is not compensated due to traction on the slippery slope, and defined by \eqref{wind_general}, with $(\eta ,\tilde{\eta})\in \mathcal{\tilde{S}}$, where $\mathcal{\tilde{S}}=[0,1]\times
\lbrack 0,1]$. Mention that it vanishes only when $\eta =\tilde{\eta}=1$, i.e. in the Riemannian case (RIEM for brevity). 
 Let us also fix $\mathcal{S=\tilde{S}}\smallsetminus \{(1,1)\}$.
The set $\mathcal{\tilde{S}}$ represents a complete problem square diagram
for our exposition (see \cref{fig_square2}). 
The key argument to answer to the above question is that in Finsler
geometry, the notion of arc length can be interpreted as time, thus making
the time-minimal paths parametrized by arc length the Finsler geodesics,
subsequently called the \textit{time geodesics} (see also \cite{matsumoto}). More precisely, the solution to the posed problem is given by the new slippery slope metric in the general case, which will be called a \textit{slippery slope metric of type} $(\eta ,\tilde{\eta})$ or an $(\eta ,\tilde{\eta})$\textit{-slope metric} for short, 
as well as the corresponding time geodesics. Our main results are
represented by the following two theorems.

\begin{theorem}
\label{Theorem1}
\textnormal{($(\eta ,\tilde{\eta})$-slope metric)} Let a 
slippery slope of a mountain be an $n$-dimensional Riemannian manifold $%
(M,h) $, $n>1$, with a cross-traction coefficient $\eta \in \lbrack 0,1]$,
an along-traction coefficient $\tilde{\eta}\in \lbrack 0,1]$ and a
gravitational wind $\mathbf{G}^{T}$ on $M$. The time-minimal paths on $%
(M,h)$ under the action of an active wind $\mathbf{G}_{\eta \tilde{\eta}}$
as in \eqref{wind_general} are the geodesics of an $(\eta ,\tilde{\eta})$-slope
metric $\tilde{F}_{\eta \tilde{\eta}}$, which satisfies 
\begin{equation}
\tilde{F}_{\eta \tilde{\eta}}\sqrt{\alpha ^{2}+2(1-\eta )\bar{g}\beta \tilde{%
F}_{\eta \tilde{\eta}}+(1-\eta )^{2}||\mathbf{G}^{T}||_{h}^{2}\tilde{F}%
_{\eta \tilde{\eta}}^{2}}=\alpha ^{2}+(2-\eta -\tilde{\eta})\bar{g}\beta 
\tilde{F}_{\eta \tilde{\eta}}+(1-\eta )(1-\tilde{\eta})||\mathbf{G}%
^{T}||_{h}^{2}\tilde{F}_{\eta \tilde{\eta}}^{2},  \label{TH_mama}
\end{equation}%
with $\alpha =\alpha (x,y),$ $\beta =\beta (x,y)$ given by \eqref{NOT},
where either $||\mathbf{G}^{T}||_{h}<\frac{1}{1-\tilde{\eta}}$ and $(\eta ,%
\tilde{\eta})\in \mathcal{D}_{1}\cup \mathcal{D}_{2}$, or $||\mathbf{G}%
^{T}||_{h}<\frac{1}{2|\eta -\tilde{\eta}|}$ and $(\eta ,\tilde{\eta})\in 
\mathcal{D}_{3}\cup \mathcal{D}_{4}$, where 
\begin{equation*}
\begin{array}{l}
\mathcal{D}_{1}=\left\{ (\eta ,\tilde{\eta})\in \mathcal{S}\text{ }|\text{ }%
\eta \geq \tilde{\eta}>2\eta -1\right\} , 
\qquad  \ \ 
\mathcal{D}_{2}=\left\{ (\eta ,\tilde{\eta})\in \mathcal{S}\text{ }|\text{ }%
\frac{3\tilde{\eta}-1}{2}<\eta <\tilde{\eta}\right\}, \\ 
\\ 
\mathcal{D}_{3}=\left\{ (\eta ,\tilde{\eta})\in \mathcal{S}\text{ }|\text{ }%
\eta \geq \frac{1}{2},\text{ }\tilde{\eta}\leq 2\eta -1\right\}, 
\quad  \ 
\mathcal{D}_{4}=\left\{ (\eta ,\tilde{\eta})\in \mathcal{S}\text{ }|\text{ }%
\tilde{\eta}\geq \frac{1}{3},\text{ }\eta \leq \frac{3\tilde{\eta}-1}{2}%
\right\} ,%
\end{array}%
\end{equation*}%
$\mathcal{S}=\bigcup\limits_{i=1}^{4}\mathcal{D}_{i}$ and $\mathcal{D}%
_{i}\cap \mathcal{D}_{j}=\varnothing ,$ for any $i\neq j,$ $i,j=1,...,4.$ No
 restriction should be imposed on $||\mathbf{G}^{T}||_{h}$ if $\eta =%
\tilde{\eta}=1.$ In particular, a slope metric of type $(0,0)$, $(1,0)$, $(0,1)$, $(1,1)$ is reduced to a Randers metric, a Matsumoto metric,  a cross slope metric and a Riemannian
metric $h$, respectively.
\end{theorem}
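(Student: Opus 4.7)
My plan is to derive equation \eqref{TH_mama} algebraically from the equation of motion $v=u+\mathbf{G}_{\eta\tilde\eta}$ with the unit self-speed constraint $h(u,u)=1$, and then to verify that this implicit relation defines a smooth positive strongly convex Finsler function on the stated domains. To expose the dependence on $u$, I would first rewrite the active wind using $\mathrm{Proj}_u \mathbf{G}^T=h(\mathbf{G}^T,u)\,u$, obtaining $\mathbf{G}_{\eta\tilde\eta}=(1-\eta)\mathbf{G}^T+(\eta-\tilde\eta)\,h(\mathbf{G}^T,u)\,u$. The Finslerian time interpretation $y/\tilde F_{\eta\tilde\eta}=u+\mathbf{G}_{\eta\tilde\eta}$ then collapses to
\[
c\,u=\frac{y}{\tilde F_{\eta\tilde\eta}}-(1-\eta)\mathbf{G}^T,\qquad c:=1+(\eta-\tilde\eta)\,h(\mathbf{G}^T,u).
\]
This is the key reformulation on which the rest of the argument rests.

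Next I would extract two scalar equations from this vector relation. Taking the $h$-norm squared of both sides and using $h(u,u)=1$ together with the notations $\alpha,\beta$ of \eqref{NOT} yields a quadratic expression for $c^2$ in $1/\tilde F_{\eta\tilde\eta}$; pairing instead with $\mathbf{G}^T$ and eliminating $h(\mathbf{G}^T,u)=(c-1)/(\eta-\tilde\eta)$ by substitution of the previous identity produces a linear-in-$c$ expression of the same shape. Equating $c$ with the positive square root of $c^2$ and clearing by $\tilde F_{\eta\tilde\eta}^2$ delivers \eqref{TH_mama}. The degenerate case $\eta=\tilde\eta$, where the substitution is ill-defined, must be treated separately but collapses consistently since there $c\equiv 1$ and the two scalar relations coincide. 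The four announced boundary specializations $(0,0),(1,0),(0,1),(1,1)$ then follow by direct algebraic checks against the known Randers, Matsumoto, cross-slope, and Riemannian forms.

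The main obstacle is the partition $\mathcal{S}=\bigcup_i \mathcal{D}_i$ and the identification of the sharp wind-force bounds that guarantee $\tilde F_{\eta\tilde\eta}$ is well-defined, positive and strongly convex. My approach is to regard \eqref{TH_mama} as a polynomial equation in $\tilde F_{\eta\tilde\eta}$ after squaring the radical, imposing two consistency conditions: the linear-in-$c$ expression must stay positive (so that the squaring step is reversible with the correct branch), and the relevant root of the resulting quadratic must be the physically admissible one, i.e. the one matching the positive branch of $c$. The sign of the leading coefficient in $\tilde F_{\eta\tilde\eta}$, controlled by $1-(1-\tilde\eta)^2\|\mathbf{G}^T\|_h^2$, together with the sign of $\eta-\tilde\eta$ that governs which quadratic factor dominates, naturally splits $\mathcal{S}$ along the curves $\eta=\tilde\eta$ and $\eta=(3\tilde\eta-1)/2$ into the four cells $\mathcal{D}_i$, producing the dichotomy $\|\mathbf{G}^T\|_h<1/(1-\tilde\eta)$ on $\mathcal{D}_1\cup\mathcal{D}_2$ versus $\|\mathbf{G}^T\|_h<1/(2|\eta-\tilde\eta|)$ on $\mathcal{D}_3\cup\mathcal{D}_4$. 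Once existence and uniqueness of $\tilde F_{\eta\tilde\eta}$ are pinned down on each cell, strong convexity should follow from an implicit-function computation of the Hessian of $\tilde F_{\eta\tilde\eta}^2$, paralleling the verifications carried out for the standard slippery slope in \cite{slippery,slipperyx}.
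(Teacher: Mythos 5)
Your derivation of the implicit relation \eqref{TH_mama} is sound: writing $y/\tilde F_{\eta\tilde\eta}=u+\mathbf{G}_{\eta\tilde\eta}$, isolating $c\,u$ with $c=1+(\eta-\tilde\eta)h(\mathbf{G}^T,u)$, and eliminating via the norm and the pairing with $\mathbf{G}^T$ is essentially Okubo's method applied to the composite motion, and it reproduces what the paper obtains by composing its two steps (for the record, after squaring the resulting polynomial in $\tilde F_{\eta\tilde\eta}$ is a quartic, not a quadratic). The genuine gap is in the second half, which is the actual content of the theorem: the sharp bounds and the partition $\mathcal{S}=\bigcup_i\mathcal{D}_i$. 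These do \emph{not} come from the sign of the leading coefficient $1-(1-\tilde\eta)^2\|\mathbf{G}^T\|_h^2$ together with the sign of $\eta-\tilde\eta$, and branch-selection in the squaring step does not produce the lines $\tilde\eta=2\eta-1$ and $\eta=\frac{3\tilde\eta-1}{2}$. In the paper they arise from intersecting two independent constraints obtained by splitting the active wind as $(\eta-\tilde\eta)\mathbf{G}_{MAT}+(1-\eta)\mathbf{G}^T$: (i) the anisotropic, direction-dependent deformation by $(\eta-\tilde\eta)\mathbf{G}_{MAT}$ yields a Matsumoto-type metric $F=\frac{\alpha^2}{\alpha-(\eta-\tilde\eta)\bar g\beta}$, which is strongly convex iff $\|\mathbf{G}^T\|_h<\frac{1}{2|\eta-\tilde\eta|}$ (an $(\alpha,\beta)$-metric criterion à la Lemma \ref{Lema1}); (ii) the rigid Zermelo translation by $(1-\eta)\mathbf{G}^T$ preserves strong convexity iff $F\big(x,-(1-\eta)\mathbf{G}^T\big)<1$, which computes to $\frac{1-(1-\tilde\eta)\|\mathbf{G}^T\|_h}{1-(\eta-\tilde\eta)\|\mathbf{G}^T\|_h}>0$. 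Comparing which of the two bounds $\frac{1}{1-\tilde\eta}$ and $\frac{1}{2|\eta-\tilde\eta|}$ is the binding one is exactly what generates the boundary lines and the dichotomy over $\mathcal{D}_1\cup\mathcal{D}_2$ versus $\mathcal{D}_3\cup\mathcal{D}_4$, with the degenerate rows $\eta=1$, $\tilde\eta=1$, $\eta=\tilde\eta$ handled separately.

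Your fallback, ``strong convexity should follow from an implicit-function computation of the Hessian of $\tilde F_{\eta\tilde\eta}^2$,'' is not a workable substitute as stated: $\tilde F_{\eta\tilde\eta}$ is only known as the positive root of a quartic, and a direct Hessian computation for such an implicitly defined function in $n$ dimensions is precisely what the two-step structure (strongly convex Matsumoto-type indicatrix plus the Zermelo navigation proposition guaranteeing that a rigid translation with $F(x,-W)<1$ yields a strongly convex indicatrix containing $y=0$) is designed to avoid. You would also need the necessity direction — showing that when $|\eta-\tilde\eta|\,\|\mathbf{G}_{MAT}\|_h\geq 1$ no genuine Finsler metric on all of $TM_0$ can result (only conic metrics), since a translation cannot repair a non-convex deformed indicatrix — which your proposal does not address. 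Without these ingredients the stated conditions on $\|\mathbf{G}^T\|_h$ and the four cells remain unproved.
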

Moreover, an $(\eta, 0)$-slope metric is a (standard) 
 slippery slope metric $\tilde{F}_{\eta }$ and a $(0,\tilde{\eta})$-slope
metric is a slippery-cross-slope metric $\tilde{F}_{\tilde{\eta}}$, both belonging to the class of the general $(\alpha ,\beta )$-metrics in Finsler geometry, which have been introduced and discussed recently
in \cite[Thm. 1.1]{slippery} and  \cite[Thm. 1.1]{slipperyx}, respectively. 
For clarity's sake, the partition of $\mathcal{\tilde{S}}$ into the mutually disjoint subsets $\mathcal{D}_{i},i=1,...,4$ is illustrated  in \cref{partition} on further
reading. It is also worth of mentioning that the above theorem now encompasses as particular cases  the solutions to: the original  
 Matsumoto's slope-of-a-mountain problem (MAT), Zermelo's navigation problem (ZNP) on a Riemannian manifold under a gravitational wind $\mathbf{G}^{T}$ as well as CROSS. Furthermore, $\tilde{F}_{\eta \tilde{\eta}}$ provides a new Finsler metric of general $(\alpha ,\beta )$ type, having the indicatrix $I_{\tilde{F}_{\eta \tilde{\eta}}}$ (i.e. $\tilde{F}_{\eta \tilde{\eta}}=1)$
defined by
\begin{equation}
\label{CCCINDICATRIX}
\sqrt{\alpha ^{2}+2(1-\eta )\bar{g}\beta +(1-\eta )^{2}||\mathbf{G}%
^{T}||_{h}^{2}}=\alpha ^{2}+(2-\eta -\tilde{\eta})\bar{g}\beta +(1-\eta )(1-%
\tilde{\eta})||\mathbf{G}^{T}||_{h}^{2}.  
\end{equation}

The second theorem enables us to find the time geodesics that correspond to
the $(\eta ,\tilde{\eta})$-slope metric (restricted to the indicatrix $I_{\tilde{F}_{\eta \tilde{\eta}}}$), giving an answer to the research question posed above. Namely, we have obtained

\begin{theorem}
\label{Theorem2} \textnormal{(Time geodesics)} 
Let a slippery slope of a mountain be an $n$-dimensional Riemannian manifold $(M,h) $, $n>1$, with a cross-traction coefficient $\eta \in \lbrack 0,1]$,
an along-traction coefficient $\tilde{\eta}\in \lbrack 0,1]$ and a gravitational wind $\mathbf{G}^{T}$ on $M$. The time-minimal paths on $(M,h)$ under the action of an active wind $\mathbf{G}_{\eta \tilde{\eta}}$
as in \eqref{wind_general} are the time-parametrized solutions $\gamma (t)=(\gamma^{i}(t)),$ $i=1,...,n$ of the ODE system 
\begin{equation}
\ddot{\gamma}^{i}(t)+2\tilde{\mathcal{G}}_{\eta \tilde{\eta}}^{i}(\gamma (t),%
\dot{\gamma}(t))=0,  \label{GGG}
\end{equation}%
where%
\begin{eqnarray*}
\tilde{\mathcal{G}}_{\eta \tilde{\eta}}^{i}(\gamma (t),\dot{\gamma}(t)) &=&%
\mathcal{G}_{\alpha }^{i}(\gamma (t),\dot{\gamma}(t))+\left[ \tilde{\Theta}%
(r_{00}+2\alpha ^{2}\tilde{R}r)+\alpha \tilde{\Omega}r_{0}\right] \frac{\dot{%
\gamma}^{i}(t)}{\alpha } \\
&&-\left[ \tilde{\Psi}(r_{00}+2\alpha ^{2}\tilde{R}r)+\alpha \tilde{\Pi}r_{0}%
\right] \frac{w^{i}}{\bar{g}}-\tilde{R}w_{\text{ }|j}^{i}\frac{\alpha
^{2}w^{j}}{\bar{g}^{2}},
\end{eqnarray*}%
with%
\begin{equation}
\begin{array}{l}
\mathcal{G}_{\alpha }^{i}(\gamma (t),\dot{\gamma}(t))=\frac{1}{4}%
h^{im}\left( 2\frac{\partial h_{jm}}{\partial x^{k}}-\frac{\partial h_{jk}}{%
\partial x^{m}}\right) \dot{\gamma}^{j}(t)\dot{\gamma}^{k}(t),\qquad \mathit{%
\tilde{\Psi}}=\frac{\bar{g}^{2}\alpha ^{2}}{2\tilde{E}}[\alpha ^{4}\tilde{A}%
^{2}\tilde{B}+(\tilde{\eta}-\eta )^{2}], \\ 
~ \\ 
r_{00}=-\frac{1}{\bar{g}}w_{j|k}\dot{\gamma}^{j}(t)\dot{\gamma}%
^{k}(t),\qquad r_{0}=\frac{1}{\bar{g}^{2}}w_{j|k}\dot{\gamma}%
^{j}(t)w^{k},\qquad r=-\frac{1}{\bar{g}^{3}}w_{j|k}w^{j}w^{k}, \\ 
~ \\ 
\tilde{R}=\frac{(1-\eta )\bar{g}^{2}}{2\alpha ^{4}\tilde{B}}[\left( 1-\tilde{%
\eta}\right) \alpha ^{2}\tilde{B}-(\tilde{\eta}-\eta )],\qquad \mathit{%
\tilde{\Theta}}=\frac{\bar{g}\alpha }{2\tilde{E}}[\alpha ^{6}\tilde{A}\tilde{%
B}^{2}-(\tilde{\eta}-\eta )^{2}\bar{g}\beta ], \\ 
~ \\ 
\mathit{\tilde{\Omega}}=\frac{(1-\eta )\bar{g}^{2}}{\alpha ^{2}\tilde{B}%
\tilde{E}}\{[\left( 1-\tilde{\eta}\right) \alpha ^{2}\tilde{B}-(\tilde{\eta}%
-\eta )][\alpha ^{6}\tilde{B}^{3}+(\tilde{\eta}-\eta )^{2}||\mathbf{G}%
^{T}||_{h}^{2}]-(\tilde{\eta}-\eta )^{2}\alpha ^{2}(\bar{g}\beta \tilde{B}+||%
\mathbf{G}^{T}||_{h}^{2}\tilde{A})\}, \\ 
~ \\ 
\mathit{\tilde{\Pi}}=\frac{(1-\eta )\bar{g}^{3}}{2\alpha ^{3}\tilde{B}\tilde{%
E}}\{[\left( 1-\tilde{\eta}\right) \alpha ^{2}\tilde{B}-(\tilde{\eta}-\eta
)][2\alpha ^{6}\tilde{A}\tilde{B}^{2}-(\tilde{\eta}-\eta )^{2}\bar{g}\beta
]+(\tilde{\eta}-\eta )^{2}\alpha ^{2}\tilde{B}[2\alpha ^{2}+(1-\eta )\bar{g}%
\beta ]\}, \\ 
~ \\ 
\tilde{A}=-\frac{1}{\alpha ^{2}}\{(1-\eta )\left[ 1-(2-\eta -\tilde{\eta}%
)\left( 1-\tilde{\eta}\right) ||\mathbf{G}^{T}||_{h}^{2}\right] -(2-\eta -%
\tilde{\eta})^{2}\bar{g}\beta -(2-\eta -\tilde{\eta})\alpha ^{2}\}, \\ 
~ \\ 
\tilde{B}=-\frac{1}{\alpha ^{2}}\{[1-2(1-\eta )(1-\tilde{\eta})||\mathbf{G}%
^{T}||_{h}^{2}]-2(2-\eta -\tilde{\eta})\bar{g}\beta -2\alpha ^{2}\}, \\ 
~ \\ 
\tilde{C}=\frac{1}{\alpha }\left( \alpha ^{2}\tilde{B}+\bar{g}\beta \tilde{A}%
\right) ,\qquad \tilde{E}=\alpha ^{6}\tilde{B}\tilde{C}^{2}+(||\mathbf{G}%
^{T}||_{h}^{2}\alpha ^{2}-\bar{g}^{2}\beta ^{2})[\alpha ^{4}\tilde{A}^{2}%
\tilde{B}+(\tilde{\eta}-\eta )^{2}]%
\end{array}
\label{geo_tilde}
\end{equation}%
and $\alpha =\alpha (\gamma (t),\dot{\gamma}(t)),$ $\beta =\beta (\gamma (t),%
\dot{\gamma}(t))$, and $w^{i}$ denoting the components of $\mathbf{G}^{T}$. 
Moreover, along the time-minimal paths $\gamma (t)$, equation \eqref{CCCINDICATRIX} holds. 
\end{theorem}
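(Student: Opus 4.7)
The plan is to exploit Theorem~\ref{Theorem1}: since the time-minimal paths on $(M,h)$ under the active wind $\mathbf{G}_{\eta\tilde{\eta}}$ coincide with the Finsler geodesics of $\tilde{F}_{\eta\tilde{\eta}}$, it suffices to compute the spray coefficients of $\tilde{F}_{\eta\tilde{\eta}}$ and to write down the standard geodesic equation $\ddot{\gamma}^{i}+2\tilde{\mathcal{G}}_{\eta\tilde{\eta}}^{i}=0$, restricted to the indicatrix $\tilde{F}_{\eta\tilde{\eta}}=1$ so that the parameter $t$ is arc length, i.e.\ time. The identity \eqref{CCCINDICATRIX} appearing at the end of the statement is then just \eqref{TH_mama} evaluated along a unit-speed geodesic, so the last claim of Theorem~\ref{Theorem2} comes for free once $\tilde{F}_{\eta\tilde{\eta}}$ is known to be the correct Finsler structure.

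First I would invoke the general spray formula for a Finsler metric in the class of general $(\alpha,\beta)$-metrics used in \cite{slippery, slipperyx}: the spray $G^i$ decomposes as the Riemannian spray $\mathcal{G}_\alpha^i$ plus correction terms built linearly from the invariants $r_{00}$, $r_0$, $r$ (the $\alpha$-traces of the covariant derivatives of $w$) and pointing along the three distinguished directions $\dot\gamma^{i}/\alpha$, $w^{i}$ and $w^{i}{}_{|j}w^{j}$. The scalar coefficients in this decomposition depend only on $\alpha$, $\beta$, $||\mathbf{G}^T||_h$, $\bar g$ and on the first and second partial derivatives $F_\alpha, F_\beta, F_{\alpha\alpha}, F_{\alpha\beta}, F_{\beta\beta}$ of the defining function $F=\tilde{F}_{\eta\tilde{\eta}}(\alpha,\beta)$.

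Since $\tilde{F}_{\eta\tilde{\eta}}$ is defined only implicitly through \eqref{TH_mama}, the crucial step is to extract these derivatives without solving for $\tilde{F}_{\eta\tilde{\eta}}$ in closed form. I would differentiate both sides of \eqref{TH_mama} in $\alpha$ and in $\beta$ separately, holding $||\mathbf{G}^T||_h$ and the traction coefficients fixed, and then solve linearly for $\partial\tilde{F}/\partial\alpha$ and $\partial\tilde{F}/\partial\beta$; a second round yields the second-order derivatives. The compact combinations $\tilde A,\tilde B,\tilde C,\tilde E$ defined in \eqref{geo_tilde} are precisely the algebraic blocks that factor out of these derivatives, while $(\tilde\eta-\eta)$ appears as the discriminant that governs how far the problem is from being a standard slippery-slope metric and produces all the asymmetric corrections not present in \cite{slippery, slipperyx}.

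Substituting these derivatives into the general $(\alpha,\beta)$-spray formula and regrouping the resulting terms along the three independent directions $\dot\gamma^{i}/\alpha$, $w^{i}/\bar g$ and $w^{i}{}_{|j}\alpha^{2}w^{j}/\bar g^{2}$ produces the coefficients $\tilde\Theta$, $\tilde\Omega$, $\tilde\Psi$, $\tilde\Pi$ and $\tilde R$ exactly as stated. The main obstacle is therefore not conceptual but bookkeeping: the implicit differentiation of \eqref{TH_mama} and the ensuing simplification into the factored forms of \eqref{geo_tilde} are long, and the cleanest strategy is to introduce $\tilde A,\tilde B,\tilde C,\tilde E$ as abbreviations from the very first line and to keep the rescaling $\mathbf{G}^T=-\bar g\,\omega^{\sharp}$ implicit throughout. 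The degenerate corners $(\eta,\tilde\eta)\in\{(0,0),(1,0),(0,1),(1,1)\}$, reducing respectively to Randers, Matsumoto, cross-slope and Riemannian sprays, then serve as sanity checks against the formulas already obtained in \cite{slippery, slipperyx}.
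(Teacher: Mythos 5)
Your overall architecture is the paper's: treat $\tilde{F}_{\eta\tilde{\eta}}$ as an implicitly defined general $(\alpha,\beta)$-metric (cf.\ \cref{PropXX}), extract the derivatives of the defining function by implicit differentiation of \eqref{TH_mama}, feed them into the spray formula for general $(\alpha,\beta)$-metrics (\cref{Prop2}), use closedness of $\beta$ to drop the $s$-terms, and finally restrict to unit $\tilde{F}_{\eta\tilde{\eta}}$-speed, so that \eqref{CCCINDICATRIX} is just \eqref{TH_mama} with $\tilde{F}_{\eta\tilde{\eta}}=1$. However, there is a concrete gap in the differentiation step you describe. You propose to differentiate \eqref{TH_mama} only in $\alpha$ and $\beta$, ``holding $||\mathbf{G}^{T}||_{h}$ fixed''. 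The function one must control is $\tilde{\phi}_{\eta\tilde{\eta}}=\tilde{\phi}_{\eta\tilde{\eta}}(||\mathbf{G}^{T}||_{h}^{2},s)$, and the spray formula of \cref{Prop2} requires not only $\tilde{\phi}_{\eta\tilde{\eta}2}$ and $\tilde{\phi}_{\eta\tilde{\eta}22}$ but also the derivatives with respect to $b^{2}=||\mathbf{G}^{T}||_{h}^{2}/\bar{g}^{2}$, namely $\tilde{\phi}_{\eta\tilde{\eta}1}$ and $\tilde{\phi}_{\eta\tilde{\eta}12}$, because the wind force varies with the position $x$ (its $x$-derivative is exactly what produces the $r_{i}$-type data). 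These $b^{2}$-derivatives are what generate the coefficients $\tilde{R}$, $\tilde{\Omega}$, $\tilde{\Pi}$ in \eqref{geo_tilde} — note the overall factor $(1-\eta)$ they carry, inherited from $\partial\tilde{\phi}_{\eta\tilde{\eta}}/\partial||\mathbf{G}^{T}||_{h}^{2}$ — and the corresponding terms $2\alpha^{2}\tilde{R}r$, $\alpha\tilde{\Omega}r_{0}$, $\alpha\tilde{\Pi}r_{0}$ and $\tilde{R}\,w^{i}_{\ |j}\alpha^{2}w^{j}/\bar{g}^{2}$ in the statement. Derivatives in $\alpha,\beta$ alone (with $||\mathbf{G}^{T}||_{h}$ frozen) can never produce them; your plan would only recover the reduced formula valid when $||\mathbf{G}^{T}||_{h}$ is constant. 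The fix is exactly the paper's \cref{Lema5}: differentiate the quartic identity also with respect to $||\mathbf{G}^{T}||_{h}^{2}$.

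A second, smaller omission: when you ``solve linearly'' the differentiated identity for the derivatives of the implicit function, you divide by the quantities the paper calls $C$ and $B$ (which become $\tilde{C}$, $\tilde{B}$ on the indicatrix and sit in the denominators of $\tilde{R}$, $\tilde{\Omega}$, $\tilde{\Pi}$ and in $\tilde{E}$). Their nonvanishing under the strong convexity bound $||\mathbf{G}^{T}||_{h}<\tilde{b}_{0}$ of \eqref{Strong_C} is not automatic; the paper needs a separate, nontrivial contradiction argument (\cref{Lema45}, with a case analysis on $1-2(1-\eta)(1-\tilde{\eta})||\mathbf{G}^{T}||_{h}^{2}$) to rule out zeros. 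Without this step the implicit differentiation, and hence the stated spray coefficients, are not justified everywhere on the admissible domain. Your unit-speed argument and the sanity checks at the four corners are fine and match the paper's use of the Zermelo-navigation fact that time-parametrized trajectories satisfy $\tilde{F}_{\eta\tilde{\eta}}(\gamma,\dot{\gamma})=1$.
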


\ 

The paper is organized in the following way. In \cref{Sec_2} we recall
some general notions and results regarding Riemann-Finsler geometry that
will serve as the necessary background in the proofs of the main results. In
our exposition, we keep in mind \cref{fig_square2}, which represents
the \textquotedblright map\textquotedblright\ of all problems $\mathcal{P}%
_{\eta ,\tilde{\eta}},$ $(\eta ,\tilde{\eta})\in \mathcal{\tilde{S}}$.
Taking into account both components  (transverse and longitudinal) of the gravitational wind $\mathbf{G}^{T}$ with respect to the direction of motion $u$, which are controlled by the traction coefficients $(\eta ,\tilde{\eta}%
)\in \mathcal{\tilde{S}}$, we significantly extend Matsumoto's slope-of-a-mountain problem 
$\mathcal{P}_{MAT}$ (or $\mathcal{P}_{1,0}),$ covering the whole square $\mathcal{%
\tilde{S}}$ and including the previous studies in this regard  \cite{slippery,cross,slipperyx}
(for clarity, in \cref{fig_square2}, $\mathcal{T}_{1,0}^{0,0}$ describes SLIPPERY, CROSS is defined by $\mathcal{P}_{0,1}$ and $\mathcal{T}_{0,1}^{0,0}$ stands for S-CROSS). Our
concern in \cref{model} is focused on the description of the general model
of a slippery mountain slope under gravity (precisely, \cref{Sec_3.2}),
starting with some special navigation problems (the so-called  reduced ZNP, reduced MAT and reduced CROSS) which are achieved by the transition
from the initial Riemannian background $\mathcal{P}_{RIEM}$ (or $\mathcal{P}%
_{1,1})$ to the Zermelo navigation problem (ZNP) under a weak wind $%
(1-\eta )\mathbf{G}^{T}$, Matsumoto slope-of-a-mountain problem (MAT)
and the cross slope problem (CROSS), respectively. Then, in \cref{Sec_4}, we
perform the proof of \cref{Theorem1}, dividing it into two steps including a
sequence of cases and lemmas, which gradually establish all statements of \cref{Theorem1}. The first one refers to the direction-dependent deformation of the background Riemannian metric $h$ by
the vector field $(\eta -\tilde{\eta})\mathbf{G}_{MAT}$, $\eta \neq \tilde{%
\eta}$, describing the effect of both traction coefficients.  
Precesily, we state that only under the assumption $\left. |\eta -%
\tilde{\eta}|\text{ }||\mathbf{G}_{MAT}||_{h}<1,\right. $ the
anisotropic deformation of $h$ by $(\eta -\tilde{\eta})\mathbf{G}%
_{MAT}$ provides a Finsler metric $F(x,y)=\frac{\alpha ^{2}}{\alpha -(\eta -%
\tilde{\eta})\bar{g}\beta }$ of Matsumoto type which is strongly convex when $||\mathbf{G}^{T}||_{h}<\frac{1}{2|\eta -\tilde{\eta}|}$ (%
\cref{Lema1,Lema2}). 
The outcome of the first step (i.e. the Matsumoto type metric $F$ for $\eta \neq \tilde{\eta})$ is the main tool for the next step from the perspective of the Zermelo navigation \cite{CJS,SH}.  By applying %
\cref{Prop3}, the second step develops a rigid translation of the indicatrix
of $F$ (provided in the first step) or $h$ if $\eta =\tilde{\eta}$ by the
rescaled gravitational wind $(1-\eta )\mathbf{G}^{T},$ restricted to $%
F(x,-(1-\eta )\mathbf{G}^{T})<1$. This leads to classical Finsler metrics,
namely the $(\eta ,\tilde{\eta})$-slope metrics, besides the necessary and
sufficient conditions for their strong convexity (\cref{Lema3}). In \cref{Sec_5}, we prove \cref{Theorem2},
deriving the spray coefficients related to the $(\eta ,\tilde{\eta})$-slope
metric $\tilde{F}_{\eta \tilde{\eta}}.$ This is based on an implicit
expression of the $(\eta ,\tilde{\eta})$-slope metrics, which belong to Finsler metrics of general $(\alpha ,\beta )$ type (\cref{PropXX}) and some technical
results formulated in \cref{Lema4,Lema5,Prop5}, including a key argument
that any time geodesic of an $(\eta ,\tilde{\eta})$-slope metric is unitary
with respect to $\tilde{F}_{\eta \tilde{\eta}}$ because it is a trajectory
in Zermelo's navigation developed in the second step. Therefore,  the 
time-minimal paths provided by \cref{Theorem2} represent the geodesics
of $\tilde{F}_{\eta \tilde{\eta}}$ restricted to the indicatrix $I_{\tilde{F}%
_{\eta \tilde{\eta}}}$. Finally, in \cref{Sec_Examples}, we conduct a $2$%
-dimensional study with a few examples, analyzing and comparing the behavior
of time geodesics and the evolution of time fronts under various gravity
effects determined by the cross- and along-traction 
as well as the gravitational wind force on the slippery slopes.

\section{Preliminaries}

\label{Sec_2}In this section, we briefly recall the notions and general facts from Riemann-Finsler geometry that are needed for presenting and proving our aforementioned results; for more details, see, for example, \cite{chern_shen,B-Miron,colleenshen,SH,Yu,Kristaly,Y-Sabau,CJS,Musznay}.

\subsection{Finsler manifolds}

Let $(M,h)$ be a Riemannian manifold, where $M$ is an $n$-dimensional $C^{\infty
} $-manifold, $n>1,$ and $h$ is a Riemannian metric on $M$. Let $T_{x}M$ be the
tangent space at $x\in M$ and $(x^{i}),$ $i=1,...,n$ be the local
coordinate system on a local chart in $x\in M$. The set $\left\{ \frac{%
\partial }{\partial x^{i}}\right\} ,$ $i=1,...,n$ denotes the natural basis
for the tangent bundle $TM=\underset{x\in M}{\cup }T_{x}M$ which is itself a 
$C^{\infty }$-manifold. Thus, for every $y\in T_{x}M$, one has $y=y^{i}\frac{%
\partial }{\partial x^{i}}$ and the coordinates on a local chart in $%
(x,y)\in TM$ are denoted by $(x^{i},y^{i}),$ $i=1,...,n$.

A natural generalization of a Riemannian metric is a \textit{Finsler metric}. Specifically, the pair $(M,F)$ is a Finsler manifold if $F:TM\rightarrow
\lbrack 0,\infty )$ is a continuous function with the following properties:

i) $F$ is a $C^{\infty }$-function on the slit tangent bundle $TM_{0}=TM\backslash \{0\}$;

ii) $F$ is positively homogeneous of degree one with respect to $y$, i.e. $%
F(x,{c}y)={c}F(x,y)$, for all ${c}>0$;

iii) the Hessian $g_{ij}(x,y)=\frac{1}{2}\frac{\partial ^{2}F^{2}}{\partial
y^{i}\partial y^{j}}$ is positive definite for all $(x,y)\in TM_{0}.$

\noindent Denoting by $I_{F}=\left\{ (x,y)\in TM\text{ }|\text{ }%
F(x,y)=1\right\} $ the indicatrix of $F$, one can remark that the property
iii) refers to the fact that $I_{F}$ is strongly convex. In particular, the
Finsler metric $F$ is a Riemannian metric if and only if $g_{ij}(x,y)$ does
not depend on $y,$ i.e. $g_{ij}(x,y)=g_{ij}(x).$

Let $\mathcal{A}$ be a conic open subset of $TM_{0}$. According to \cite%
{CJS,JS,JPS} this means that for each $x\in M,$ $\mathcal{A}_{x}=\mathcal{A\
\cap \ }T_{x}M$ is a conic subset, i.e. if $y\in \mathcal{A}_{x},$ then $%
\lambda y$ $\in \mathcal{A}_{x}$ for every $\lambda >0$. In particular, a 
\textit{conic Finsler metric} is a Finsler metric on $\mathcal{A}$, i.e. $F:%
\mathcal{A}\rightarrow \lbrack 0,\infty )$ is a continuous function
satisfying i), ii) and iii) $\ $for all $(x,y)\in \mathcal{A}$ (see \cite%
{CJS,JS}).

A smooth vector field on $TM_{0},$ locally expressed by $S=y^{i}\frac{%
\partial }{\partial x^{i}}-2\mathcal{G}^{i}\frac{\partial }{\partial y^{i}},$
is called a \textit{spray} on $M$. The functions $\mathcal{G}^{i}=\mathcal{G}%
^{i}(x,y),$ $i=1,...,n$ are positively\ homogeneous of degree two with
respect to $y,$ i.e. $\mathcal{G}^{i}(x,{c}y)={c}^{2}\mathcal{G}^{i}(x,y)$,
for all ${c}>0,$ and they are called the \textit{spray coefficients} \cite%
{chern_shen}. In the case where the spray is induced by a Finsler metric ${F=%
}\sqrt{g_{ij}(x,y)y^{i}y^{j}}$, the spray coefficients are given by 
\begin{equation}
\mathcal{G}^{i}(x,y)=\frac{1}{4}g^{il}\big\{\lbrack {F}%
^{2}]_{x^{k}y^{l}}y^{k}-[{F}^{2}]_{x^{l}}\big\}=\frac{1}{4}g^{il}\left( 2%
\frac{\partial g_{jl}}{\partial x^{k}}-\frac{\partial g_{jk}}{\partial x^{l}}%
\right) y^{j}y^{k},  \label{S1}
\end{equation}%
$(g^{il})$ being the inverse matrix of $(g_{il})$.

Let us consider a regular piecewise $C^{\infty }$-curve on $M,$ $\gamma
:[0,1]\rightarrow M$, $\gamma (t)=(\gamma ^{i}(t)),$ $i=1,...,n,$ where the
velocity vector of $\gamma $ is denoted by $\dot{\gamma}(t)=\frac{d\gamma 
}{dt}.$ The curve $\gamma $ is called $F$\textit{-geodesic} if $\dot{%
\gamma}(t)$ is parallel along the curve, i.e. in the local coordinates, $%
\gamma ^{i}(t),$ $i=1,...,n$ are the solutions of the ODE system%
\begin{equation}
\ddot{\gamma}^{i}(t)+2\mathcal{G}^{i}(\gamma (t),\dot{\gamma}(t))=0.
\label{geo1}
\end{equation}

It is worthwhile to mention that Zermelo's navigation, apart from
the fact that it is a classical optimal control problem, provides a technique
to construct a new Finsler metric by perturbing a given Finsler metric (the
so-called background metric) by a vector field $W,$ i.e. a time-independent
wind on a manifold $M$, under some constraints. In particular, by
considering that the background metric is a Riemannian one, denoted by $h,$
the Randers metric solves Zermelo's problem of navigation in the case of
weak wind $W$, i.e. $||W||_{h}<1$ \cite{colleenshen,chern_shen}. When $W$
is a critical wind, i.e. $||W||_{h}=1,$ the problem is solved by the Kropina
metric \cite{Y-Sabau}. In this respect, we mention the following result 
(see \cite[Lemma 3.1]{SH}, \cite[Proposition 2.14]{CJS}, \cite[Lemma 1.4.1]%
{chern_shen}).

\begin{proposition}
\label{Prop3} Let $(M,F)$ be a Finsler manifold and $W$ a vector field on $M$
such that $F(x,-W)<1$. Then the solution of the Zermelo's navigation problem
with the navigation data $(F,W)$ is a Finsler metric $\tilde{F}$ obtained by
solving the equation%
\begin{equation}
F(x,y-\tilde{F}(x,y)W)=\tilde{F}(x,y),\text{ }  \label{MAIN}
\end{equation}%
for any nonzero $y\in T_{x}M$, $x\in M.$
\end{proposition}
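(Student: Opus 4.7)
The plan is to recover $\tilde{F}$ from the Zermelo picture: a unit-$\tilde{F}$-speed trajectory has resultant velocity $v=u+W$ with $u\in I_F|_x$, so the indicatrix $I_{\tilde{F}}|_x$ must be the rigid translate $I_F|_x+W(x)$. The implicit equation \eqref{MAIN} is then just the analytic expression of this translation, once the one-homogeneity of $F$ is used to scale out $\tilde{F}(x,y)$: if $\tilde{F}(x,y)=\tau$, then $y/\tau - W$ should lie on $I_F|_x$, i.e.\ $F(x,y-\tau W)=\tau$.

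First I would establish existence and uniqueness of $\tilde{F}(x,y)$ for a fixed nonzero $y\in T_xM$. Consider $\phi(\tau):=F(x,y-\tau W)-\tau$ on $\tau\ge 0$. One has $\phi(0)=F(x,y)>0$, and the Finslerian triangle inequality combined with positive one-homogeneity gives $\phi(\tau)\le F(x,y)-\tau(1-F(x,-W))$, which tends to $-\infty$ because $F(x,-W)<1$. Moreover, $\phi'(\tau)=-F_{y^i}(x,y-\tau W)W^i-1$, and the Finslerian fundamental inequality $F_{y^i}(x,u)W^i\ge -F(x,-W)$ (valid for any nonzero $u$, since $F_{y^i}(x,u)(-W)^i\le F(x,-W)$ by Cauchy-Schwarz) yields $\phi'(\tau)\le F(x,-W)-1<0$. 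Hence $\phi$ has a unique positive root, and we set $\tilde{F}(x,y)$ equal to it, extending by $\tilde{F}(x,0):=0$.

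Positive one-homogeneity of $\tilde{F}$ then follows immediately by replacing $y$ with $cy$ in \eqref{MAIN} and invoking the one-homogeneity of $F$ in its second argument. Smoothness of $\tilde{F}$ on $TM_0$ comes from the implicit function theorem applied to $\Phi(x,y,\tau):=F(x,y-\tau W)-\tau$: the partial $\partial_\tau\Phi$ is nonzero by the preceding estimate, and at $\tau=\tilde{F}(x,y)$ the argument $u=y-\tilde{F}W$ satisfies $F(x,u)=\tilde{F}(x,y)>0$, hence $u\ne 0$ and $F$ is smooth there. Finally, the indicatrix $I_{\tilde{F}}|_x$ is the rigid translate $I_F|_x+W(x)$ of a strongly convex hypersurface, and is therefore itself strongly convex as an affine hypersurface in $T_xM$. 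Because $\tilde{F}$ is positively one-homogeneous, this Euclidean strong convexity of $I_{\tilde F}|_x$ is equivalent to positive-definiteness of the Hessian of $\tilde{F}^2/2$ on $T_xM\setminus\{0\}$, which completes the verification that $\tilde{F}$ is a Finsler metric.

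The main obstacle I expect is this last equivalence: since the indicatrix $I_{\tilde F}|_x$ is no longer centered at the origin, some care is needed to transfer its strong convexity as an affine hypersurface into the positive-definiteness of $g_{ij}^{\tilde F}=\frac{1}{2}(\tilde F^2)_{y^iy^j}$. Concretely, I would differentiate \eqref{MAIN} twice in $y$, express $\partial^2\tilde F/\partial y^i\partial y^j$ in terms of the Hessian of $F$ at $u=y-\tilde F W$, and use the identification $F_{y^i}u^i=F$ to show that the fundamental tensor of $\tilde F$ is obtained from $g^F(u)$ by a congruence that preserves positive-definiteness whenever $F(x,-W)<1$. The other pieces (existence, uniqueness, homogeneity, smoothness) are routine once the strict monotonicity of $\phi$ is established.
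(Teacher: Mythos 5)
The paper does not actually prove \cref{Prop3}: it is recalled as a known result, with citations to \cite[Lemma 3.1]{SH}, \cite[Proposition 2.14]{CJS} and \cite[Lemma 1.4.1]{chern_shen}, and the remarks following it only record the two facts you set out to establish (unique positive solvability of \eqref{MAIN}, and that $F(x,-W)<1$ guarantees $\tilde{F}$ is a Finsler metric with strongly convex indicatrix enclosing $y=0$). Your argument is essentially the standard proof from those sources, and it is sound: existence and uniqueness of the positive root via subadditivity and the fundamental inequality (giving $\phi'(\tau)\le F(x,-W)-1<0$), homogeneity by uniqueness, smoothness by the implicit function theorem (the solution stays away from $y-\tilde F W=0$ since $F$ there equals $\tilde F>0$), and strong convexity because $I_{\tilde F}|_x$ is the rigid translate $I_F|_x+W(x)$ with the origin in its interior — note that your existence/uniqueness step already proves the origin is enclosed, since every ray from $0$ meets the translated indicatrix exactly once; that is precisely where $F(x,-W)<1$ does its work, and it is worth saying so explicitly when you invoke the equivalence between strong convexity of the indicatrix and positive definiteness of $\tfrac12(\tilde F^2)_{y^iy^j}$ (for the homogeneity argument to apply, $\tilde F$ must be positive and smooth on all of $T_xM\setminus\{0\}$, which is exactly the origin-enclosed condition). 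Two small caveats: at a value $\tau_0$ with $y-\tau_0W=0$ the function $\phi$ is only continuous, not differentiable, so the strict-monotonicity argument should be run on the two subintervals (or one notes directly that $\phi(\tau)<0$ for $\tau\ge\tau_0$); and your fallback plan describing $\tilde g$ as obtained from $g^F(u)$ by a pure congruence is not literally accurate — already in the Randers case the relation involves rank-one correction terms — although positive definiteness does transfer, and the cleaner route is the geometric one you state first: $\tilde g_y$ splits orthogonally along $\mathbb{R}y\oplus T_yI_{\tilde F}$, with the tangential block a positive multiple of the second fundamental form of the translated indicatrix, which translation preserves.
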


\noindent Since the indicatrix $I_{F}$ is strongly convex and it is assumed
that $F(x,-W)<1$, \eqref{MAIN} admits a unique positive solution $\tilde{%
F}$ for any nonzero $y\in T_{x}M$ \cite{CJS,SH}. Another key remark
regarding the inequality $F(x,-W)<1$ is that it assures the fact that $%
\tilde{F}$ is a Finsler metric, having the indicatrix $I_{\tilde{F}}=\left\{
(x,y)\in TM\text{ \ }|\text{ }\tilde{F}(x,y)=1\right\} $ strongly convex, as
well as, for any $x\in M$, $y=0$ belongs to the region bounded by $I_{\tilde{%
F}}$; for more details, see \cite{CJS}. Additionally, any regular
piecewise $C^{\infty }$-curve $\gamma :[0,1]\rightarrow M$, parametrized by
time, that represents a trajectory in Zermelo's navigation problem has unit 
$\tilde{F}$-length, i.e. $\tilde{F}(\gamma (t),\dot{\gamma}(t))=1,$ where $%
\dot{\gamma}(t)$ is the velocity vector \cite[Lemma 1.4.1]{chern_shen}.

\subsection{General $(\protect\alpha ,\protect\beta )$-metrics}

Various examples of Finsler manifolds can be found in the literature and a few of
them are outlined in the sequel. Let $\alpha ^{2}=a_{ij}(x)y^{i}y^{j}$ be
a quadratic form, where $a_{ij}(x)$ is a Riemannian metric on $M$ and $\beta
=b_{i}(x)dx^{i}$ be a differential $1$-form on $M$, also expressed as $\beta
=b_{i}y^{i}$. The pair $(M,F)$ is called Finsler manifold with \textit{%
general} $(\alpha ,\beta )$-\textit{metric }if the Finsler metric $F$ can be
read as $F=\alpha \phi (b^{2},s)$, where $\phi (b^{2},s)$ is a positive $%
C^{\infty }$-function in the variables $b^{2}=||\beta ||_{\alpha
}^{2}=a^{ij}b_{i}b_{j}$ and $s=\frac{\beta }{\alpha }$, with $|s|\leq
b<b_{0} $ and $0<b_{0}\leq \infty $; for more details, see \cite{Yu}. 
The examples of general $(\alpha ,\beta )$-metrics are provided by the slippery
slope and slippery-cross-slope metrics, which have been presented recently in \cite%
{slippery,cross,slipperyx}. In the case where $\phi $ depends only on the
variable $s$, the function $F=\alpha \phi (s)$ is known as $(\alpha ,\beta )$%
-\textit{metric}. Such an example is the Randers metric $F=\alpha +\beta ,$
with $\phi (s)=1+s$, which solves Zermelo's navigation problem under the
influence of a weak wind, i.e. $|s|\leq b<1$ \cite{chern_shen}. Another
example is provided by the Matsumoto metric $F=\frac{\alpha ^{2}}{\alpha
-\beta },$ with $\phi (s)={\frac{1}{1-s}}$ and $|s|\leq b<{\frac{1}{2}},$
which carries out the solution to Matsumoto's slope-of-a-mountain problem 
\cite{matsumoto}.

From the theory of the general $(\alpha, \beta )$-metrics we recall only
a few key results for our arguments.

\begin{proposition}
\label{Prop1} \cite{Yu} Let $M$ be an $n$-dimensional manifold. $F=\alpha
\phi (b^{2},s)$ is a Finsler metric for any Riemannian metric $\alpha $ and $%
1$-form $\beta ,$ with $||\beta ||_{\alpha }<b_{0}$ if and only if $\phi
=\phi (b^{2},s)$ is a positive $C^{\infty }$-function satisfying%
\begin{equation*}
\phi -s\phi _{2}>0,\text{ \ \ }\phi -s\phi _{2}+(b^{2}-s^{2})\phi _{22}>0,
\end{equation*}%
when $n\geq 3$ or%
\begin{equation*}
\phi -s\phi _{2}+(b^{2}-s^{2})\phi _{22}>0,
\end{equation*}%
when $n=2$, where $s=\frac{\beta }{\alpha }$ and $b=||\beta ||_{\alpha }$
satisfy $|s|\leq b<b_{0}$.
\end{proposition}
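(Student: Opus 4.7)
The plan is to verify the three defining properties of a Finsler metric for $F=\alpha\phi(b^{2},s)$: smoothness on $TM_{0}$, positive $1$-homogeneity in $y$, and strong convexity of the fundamental tensor. Smoothness and $1$-homogeneity are immediate: $\alpha$ is smooth off the zero section and $\phi$ is smooth by hypothesis, while $\alpha$ is positively $1$-homogeneous in $y$, $b^{2}$ is independent of $y$, and $s=\beta/\alpha$ is $0$-homogeneous, so $F(x,cy)=cF(x,y)$ for $c>0$. The entire content therefore lies in analysing $g_{ij}=\tfrac{1}{2}(F^{2})_{y^{i}y^{j}}$.

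To compute $g_{ij}$ I would use the elementary derivatives $\alpha_{y^{i}}=\ell_{i}:=a_{ij}y^{j}/\alpha$, $\beta_{y^{i}}=b_{i}$, and $s_{y^{i}}=(b_{i}-s\ell_{i})/\alpha$. Differentiating $F^{2}=\alpha^{2}\phi^{2}(b^{2},s)$ twice and collecting terms yields a tensor of the form
\[
g_{ij}=p\,a_{ij}+p_{0}\,b_{i}b_{j}+p_{1}(b_{i}\ell_{j}+b_{j}\ell_{i})+p_{2}\,\ell_{i}\ell_{j},
\]
where $p,p_{0},p_{1},p_{2}$ are explicit polynomials in $\phi,\phi_{2},\phi_{22}$ evaluated at $(b^{2},s)$; in particular one reads off $p=\phi(\phi-s\phi_{2})$. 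The decisive structural observation is that $g_{ij}-p\,a_{ij}$ has rank at most two and acts nontrivially only on $\mathrm{span}(y,b^{\sharp})$.

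The main calculation is then the determinant identity. Choosing an $a$-orthonormal frame whose first two vectors span $\mathrm{span}(y,b^{\sharp})$ makes $g_{ij}$ block-diagonal: an $(n-2)\times(n-2)$ block equal to $p\,\mathrm{Id}$ plus a $2\times 2$ block whose entries are determined by the Gram data $\{1,s,s,b^{2}\}$ of $\{y/\alpha,b^{\sharp}\}$. A short computation then yields
\[
\det(g_{ij})=\phi^{\,n+1}(\phi-s\phi_{2})^{\,n-2}\bigl[(\phi-s\phi_{2})+(b^{2}-s^{2})\phi_{22}\bigr]\det(a_{ij}).
\]
Strong convexity is equivalent to all eigenvalues being positive; since $\phi>0$ is given, the $n-2$ eigenvalues from the orthogonal block are each $\phi(\phi-s\phi_{2})$, forcing $\phi-s\phi_{2}>0$ when $n\geq 3$, while positivity of the $2\times 2$ block (whose determinant is $\phi^{2}[(\phi-s\phi_{2})+(b^{2}-s^{2})\phi_{22}]$ up to a positive factor) yields the second inequality in every dimension. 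When $n=2$ the power $(n-2)=0$ drops out, so only the bracket condition survives.

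For the converse, fix a point $x_{0}$ and, given any admissible pair $(B,S)$ with $|S|\leq B<b_{0}$, construct locally a Riemannian metric $a$ and a $1$-form $\beta$ realising $b(x_{0})=B$ and $s(x_{0},y)=S$ for some $y\in T_{x_{0}}M$; positive definiteness of $g_{ij}$ at $(x_{0},y)$ forces the inequalities at $(B,S)$, and since $(B,S)$ was arbitrary the conditions on $\phi$ follow on the stated domain. I expect the main obstacle to be the clean identification of the $2\times 2$ block in the adapted orthonormal frame: the coefficients $p_{0}$, $p_{1}$, $p_{2}$ are individually cumbersome, and the fact that they combine into the compact factor $(\phi-s\phi_{2})+(b^{2}-s^{2})\phi_{22}$ requires careful bookkeeping. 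Once this block calculation is executed, the equivalence is immediate.
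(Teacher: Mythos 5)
Note first that the paper does not prove this proposition at all: it is recalled verbatim from Yu--Zhu \cite{Yu}, so the relevant comparison is with the standard proof there (and with the analogous Lemma 1.1.2 of \cite{chern_shen} for $(\alpha,\beta)$-metrics). Your overall strategy is the right one: smoothness and $1$-homogeneity are immediate, $b^{2}$ depends only on $x$, so the $y$-Hessian computation is formally the $(\alpha,\beta)$-metric computation with $\phi_{2},\phi_{22}$ in place of $\phi',\phi''$, and the identity $\det(g_{ij})=\phi^{n+1}(\phi-s\phi_{2})^{n-2}\bigl[\phi-s\phi_{2}+(b^{2}-s^{2})\phi_{22}\bigr]\det(a_{ij})$ together with the coefficient $p=\phi(\phi-s\phi_{2})$ of $a_{ij}$ is correct. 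The ``only if'' half (realizing an arbitrary admissible pair $(B,S)$ with $|S|\leq B<b_{0}$ by a suitable choice of $\alpha$, $\beta$ and $y$, then reading the inequalities off positive definiteness) is also fine in outline.

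The genuine gap is in the ``if'' direction. From your block decomposition, positive definiteness of $(g_{ij})$ at $(x,y)$ is equivalent to positivity of the $(n-2)$-fold eigenvalue $\phi(\phi-s\phi_{2})$ \emph{and} positive definiteness of the $2\times 2$ block on $\mathrm{span}(y,b^{\sharp})$, but you only control the determinant of that block. A symmetric $2\times 2$ matrix with positive determinant may be negative definite, so the hypotheses $\phi>0$, $\phi-s\phi_{2}>0$, $\phi-s\phi_{2}+(b^{2}-s^{2})\phi_{22}>0$ do not yet yield strong convexity from your computation as written. Two standard ways to close this: (i) the deformation argument used in \cite{Yu} and \cite{chern_shen} --- set $\phi_{t}=(1-t)+t\phi$, check that the hypotheses are preserved for every $t\in[0,1]$, so $\det(g_{t})$ never vanishes, and conclude by continuity of eigenvalues from the Riemannian case $t=0$; or (ii) within your frame computation, observe that $y$ lies in the distinguished $2$-plane and $g_{ij}y^{i}y^{j}=F^{2}>0$ by Euler's theorem, so the block always has a positive eigenvalue, and positive determinant then forces both eigenvalues to be positive. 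Either remark repairs the argument; you should also dispose explicitly of the degenerate configurations $s=\pm b$ (where $y$ and $b^{\sharp}$ are parallel and the $2$-plane collapses) and $b=0$, e.g.\ by continuity or by noting that the correction to $p\,a_{ij}$ becomes rank one there.
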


\noindent We notice that $\phi _{1}$ and $\phi _{2}$ denote the derivatives
of the function $\phi $ with respect to the first variable $b^{2}$ and the
second variable $s$, respectively. Similarly, $\phi _{12}$ and $\phi _{22}$
denote the derivatives of $\phi _{1}$ and $\phi _{2}$ with respect to $s.$
When $\phi $ is a function only of variable $s$, the derivatives $\phi _{2}$
and $\phi _{22}$ are simply denoted by $\phi ^{\prime }$ and $\phi ^{\prime
\prime }$, respectively.

To conclude the presentation of the desired results, we also need to recall
the following notation 
\begin{equation}
\begin{array}{l}
r_{ij}=\frac{1}{2}(b_{i|j}+b_{j|i}),\quad r_{i}=b^{j}r_{ji},\quad
r^{i}=a^{ij}r_{j},\quad r_{00}=r_{ij}y^{i}y^{j},\quad r_{0}=r_{i}y^{i},\quad
r=b^{i}r_{i},\quad \\ 
~ \\ 
s_{ij}=\frac{1}{2}(b_{i|j}-b_{j|i}),\quad s_{i}=b^{j}s_{ji},\quad
s^{i}=a^{ij}s_{j},\quad s_{0}^{i}=a^{ij}s_{jk}y^{k},\quad s_{0}=s_{i}y^{i},%
\end{array}
\label{rs}
\end{equation}%
with $b^{j}=a^{ji}b_{i}$, $b_{i|j}=\frac{\partial b_{i}}{\partial x^{j}}%
-\Gamma _{ij}^{k}b_{k}$ and $\Gamma _{ij}^{k}=\frac{1}{2}a^{km}\left( \frac{%
\partial a_{jm}}{\partial x^{i}}+\frac{\partial a_{im}}{\partial x^{j}}-%
\frac{\partial a_{ij}}{\partial x^{m}}\right) $ being the Christoffel
symbols of the Riemannian metric $a_{ij}$. We point out that the
differential $1$-form $\beta $ is closed if and only if $s_{ij}=0$ (see 
\cite{chern_shen}).

\begin{proposition}
\label{Prop2} \cite{Yu} For a general $(\alpha ,\beta )$\textit{-}metric $%
F=\alpha \phi (b^{2},s)$, its spray coefficients $\mathcal{G}^{i}$ are
related to the spray coefficients $\mathcal{G}_{\alpha }^{i}$ of $\alpha $ by%
\begin{eqnarray*}
\mathcal{G}^{i} &=&\mathcal{G}_{\alpha }^{i}+\alpha Qs_{0}^{i}+\left[
\Theta (-2\alpha Qs_{0}+r_{00}+2\alpha ^{2}Rr)+\alpha \Omega
(r_{0}+s_{0})\right] \frac{y^{i}}{\alpha } \\
&&+\left[ \Psi (-2\alpha Qs_{0}+r_{00}+2\alpha ^{2}Rr)+\alpha \Pi
(r_{0}+s_{0})\right] b^{i}-\alpha ^{2}R(r^{i}+s^{i}),
\end{eqnarray*}%
where%
\begin{eqnarray*}
Q &=&\frac{\phi _{2}}{\phi -s\phi _{2}},\qquad \qquad \qquad \qquad \text{\
\ \ \ }\Theta  \ = \ \frac{(\phi -s\phi _{2})\phi _{2}-s\phi \phi _{22}}{%
2\phi \lbrack \phi -s\phi _{2}+(b^{2}-s^{2})\phi _{22}]}, \\
\Psi &=&\frac{\phi _{22}}{2[\phi -s\phi _{2}+(b^{2}-s^{2})\phi _{22}]},\quad
\ \ \ \Pi \ =\  \frac{(\phi -s\phi _{2})\phi _{12}-s\phi _{1}\phi _{22}}{%
(\phi -s\phi _{2})[\phi -s\phi _{2}+(b^{2}-s^{2})\phi _{22}]}, \\
\Omega &=&\frac{2\phi _{1}}{\phi }-\frac{s\phi +(b^{2}-s^{2})\phi _{2}}{\phi 
}\Pi ,\qquad R\ = \ \frac{\phi _{1}}{\phi -s\phi _{2}}.
\end{eqnarray*}
\end{proposition}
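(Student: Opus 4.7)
The plan is to recognize $\tilde{F}_{\eta\tilde{\eta}}$ as a general $(\alpha,\beta)$-metric (as will be established in \cref{PropXX}), writing $\tilde{F}_{\eta\tilde{\eta}}=\alpha\phi(b^{2},s)$ with $s=\beta/\alpha$ and $b^{2}=||\beta||_{\alpha}^{2}$. Since $\phi$ is defined only implicitly through \eqref{TH_mama}, explicit formulas for $\phi_{2},\phi_{22},\phi_{1},\phi_{12}$ must be obtained by implicit differentiation of that relation rather than by solving for $\phi$. \cref{Prop2} (Yu's spray formula) will then yield $\tilde{\mathcal{G}}_{\eta\tilde{\eta}}^{i}$ directly, and the $F$-geodesic equation \eqref{geo1} becomes \eqref{GGG}.

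Two simplifications drive the calculation. First, the gravitational wind $\mathbf{G}^{T}=-\bar{g}\omega^{\sharp}$ is proportional to a gradient, so the dual $1$-form $\beta$ is closed; hence $s_{ij}\equiv 0$ in the notation \eqref{rs}, and all terms containing $s_{0}^{i}$, $s_{0}$, $s^{i}$ in \cref{Prop2} drop out. Second, by \cref{Prop3}, every trajectory of Zermelo's navigation is $\tilde{F}$-unitary, so any time-minimal path satisfies $\tilde{F}_{\eta\tilde{\eta}}(\gamma,\dot{\gamma})=1$; the indicatrix equation \eqref{CCCINDICATRIX} can then be used throughout the computation to convert derivatives of $\phi$ into the compact expressions $\tilde{A},\tilde{B},\tilde{C},\tilde{E}$ listed in \eqref{geo_tilde}. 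After these substitutions, the coefficients $Q,\Theta,\Psi,\Pi,\Omega,R$ of \cref{Prop2} should reduce, up to the identification $b^{i}=w^{i}/\bar{g}$ induced by $\mathbf{G}^{T}=-\bar{g}\omega^{\sharp}$, to $\tilde{\Theta},\tilde{\Psi},\tilde{\Pi},\tilde{\Omega},\tilde{R}$, while the invariants $r_{00},r_{0},r,r^{i}$ from \eqref{rs} take exactly the forms with covariant derivatives of $w$ stated in the theorem.

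The main obstacle will be the algebraic bookkeeping in the implicit differentiation step: differentiating \eqref{TH_mama} twice in $s$ and once in $b^{2}$ produces rational expressions in $\phi$ whose denominators, after restriction to $\tilde{F}_{\eta\tilde{\eta}}=1$, must collapse to the single Jacobian-type quantity $\tilde{E}$. Identifying this common denominator and verifying that the numerators assemble into precisely the forms of $\tilde{\Theta},\tilde{\Psi},\tilde{\Omega},\tilde{\Pi}$ will require a coordinated use of \eqref{CCCINDICATRIX} to eliminate higher powers of $\tilde{F}_{\eta\tilde{\eta}}$. The degenerate direction $\eta=\tilde{\eta}$, where the factor $(\tilde{\eta}-\eta)^{2}$ in \eqref{geo_tilde} vanishes and the Matsumoto-type deformation from \cref{Theorem1} collapses back to $\alpha=h$, offers a natural sanity check, and the technical \cref{Lema4,Lema5,Prop5} announced in the outline should provide the identities needed to render this collapse to the stated $\tilde{\mathcal{G}}_{\eta\tilde{\eta}}^{i}$ effective.
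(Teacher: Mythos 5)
Your proposal does not address the statement at hand. \cref{Prop2} is a general result about an arbitrary general $(\alpha,\beta)$-metric $F=\alpha\phi(b^{2},s)$: it asserts a universal formula expressing the spray coefficients $\mathcal{G}^{i}$ of $F$ in terms of $\mathcal{G}_{\alpha}^{i}$ and the quantities $Q,\Theta,\Psi,\Pi,\Omega,R$ built from $\phi$ and the invariants \eqref{rs}. What you have sketched instead is the \emph{application} of this formula to the particular metric $\tilde{F}_{\eta\tilde{\eta}}$ --- implicit differentiation of \eqref{TH_mama}, restriction to the indicatrix \eqref{CCCINDICATRIX}, identification of $\tilde{A},\tilde{B},\tilde{C},\tilde{E}$ --- which is the content of \cref{PropXX}, \cref{Lema4,Lema5,Prop5} and ultimately \cref{Theorem2}, not of \cref{Prop2}. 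Moreover, your plan explicitly invokes ``\cref{Prop2} (Yu's spray formula)'' as a known tool, so read as a proof of \cref{Prop2} itself it is circular: you presuppose exactly the identity you are asked to establish.

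An actual proof would have to start from the definition: compute the fundamental tensor $g_{ij}=\frac{1}{2}[F^{2}]_{y^{i}y^{j}}$ of $F=\alpha\phi(b^{2},s)$, which expresses $g_{ij}$ as a combination of $a_{ij}$, $b_{i}b_{j}$ and $y_{i}y_{j}$-type terms with coefficients in $\phi,\phi_{1},\phi_{2},\phi_{22},\dots$; invert this tensor; substitute into the general spray formula \eqref{S1}; and then organize the derivatives $\frac{\partial b_{i}}{\partial x^{j}}$ through the splitting $b_{i|j}=r_{ij}+s_{ij}$ of \eqref{rs} so that the correction to $\mathcal{G}_{\alpha}^{i}$ collects into the stated terms with coefficients $Q,\Theta,\Psi,\Pi,\Omega,R$. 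None of these steps appears in your proposal, and the simplifications you emphasize (closedness of $\beta$, unit $\tilde{F}$-length of Zermelo trajectories) are specific to the slope problem and play no role in the general proposition. The paper itself does not reprove this result either --- it is quoted from Yu and Zhu \cite{Yu} --- but a blind proof of the proposition must reproduce that computation rather than assume its conclusion.
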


\section{Model of a slippery mountain slope under gravity}

\label{model}
Let $(M,h)$ be an $n$-dimensional Riemannian manifold, $n>1,$ which
represents a model for a slippery slope of a mountain. Let $\omega
^{\sharp }=h^{ji}\frac{\partial p}{\partial x^{j}}\frac{\partial }{\partial
x^{i}}$ be the gradient vector field of $p$, where $p:M\rightarrow \mathbb{R}
$ is a $C^{\infty }$-function on $M.$ Making use of $\omega ^{\sharp }$, we
have defined the gravitational wind $\mathbf{G}^{T}=-\bar{g}\omega ^{\sharp }
$, where $\bar{g}$ is the rescaled magnitude of the acceleration of gravity $%
g$, i.e. $\bar{g}=\lambda g,$ $\lambda >0$ \cite%
{slippery,Nicprw,cross,slipperyx}. We note that if the slope of a 
mountain $(M,h)$ is a surface given by $z=f(x^{1},x^{2}),$ where $f$ is a $%
C^{\infty}$-function on $M$, then $p$ is the height function
of the surface, i.e. $p=f(x^{1},x^{2})$.  Based on scaling, we assume throughout
this section that we work with the self-velocity $u$ of a moving craft on
the slope with $||u||_{h}=\sqrt{h(u,u)}=1,$ as one often sees in the
literature on the Zermelo navigation (e.g. \cite{colleenshen}). Along
this section we also refer  to a $2$-dimensional model for the slope, more
precisely, to the inclined plane for a better view of the study, although
the time-optimal navigation problems described in this work are valid for an arbitrary
dimension.

\subsection{Some special cases}

Looking at the known cases visualized graphically in \cref{fig_square_0}, we
first propose some new scenarios. Namely, we are going to consider three
special transitions with varying tractions, linking the Riemannian set-up with the
Matsumoto, Zermelo and cross slope landscapes. For clarity's sake, see improved %
\cref{fig_square_1} in this regard. As we can observe right below, it is
possible to obtain the explicit form of the Finsler metrics in the first
two problems, which are respectively of the Randers and Matsumoto type,
including the parameter $\eta $ or $\tilde{\eta}$ in their expressions in
addition.

We begin by referring to the Zermelo navigation problem, where the purely
geometric solution is given by a Finsler metric of Randers type, under the
action of a space-dependent weak vector field $W$ on a Riemannian manifold $%
(M,h)$ \cite{SH,chern_shen,colleenshen,Zer}.

\subsubsection{Reduced ZNP}

\label{sec_RZNP}

Let us take into account the setting $\eta =\tilde{\eta}\in[0,1].$ In the sequel, this
scenario is called the \textit{reduced Zermelo navigation problem} (R-ZNP
for brevity) and it is marked illustratively by the dashed blue diagonal in %
\cref{fig_square_1}. Since the traction parameters run through the full
range, we can connect the Riemannian and Zermelo cases directly by the
transition $\mathcal{T}_{1,1}^{0,0}$ along the diagonal $\eta =\tilde{\eta%
}$. More precisely, the resultant velocity in this case is defined as
follows 
\begin{equation}
v_{_{R-ZNP}}=u+(1-\eta )\mathbf{G}^{T},  \label{vr-znp}
\end{equation}%
for any $\eta \in \lbrack 0,1]$. In other words, both cross and
effective winds are compensated respectively by the cross- and
along-traction coefficients at the same time and relatively equally, i.e. $P\in{OC}$ in \cref{fig_slope_general}.  

Now we are in position to apply the Zermelo's navigation technique as in 
\cite{chern_shen, colleenshen}, with the navigation data $(h,W)$, namely, deforming the Riemannian metric $h$ by a weak
vector field $W=(1-\eta )\mathbf{G}^{T}$, i.e. $||W||_{h}=(1-\eta )||\mathbf{G}^{T}||_{h}<1$, which stands for the strong convexity condition in this case. Namely, by using the assumption on the self-speed $%
||u||_{h}=1$ and \eqref{vr-znp}, we get $||v_{_{R-ZNP}}||_{h}^{2}-2(1-\eta )h(v_{_{R-ZNP}},\mathbf{G}^{T})-1+(1-\eta )^{2}||\mathbf{G}%
^{T}||_{h}^{2}=0$. This leads to the Finsler metric of Randers type $\tilde{F%
}_{_{R-ZNP}}$ including either of the traction coefficients as a parameter. The result is 
\begin{equation}
\tilde{F}_{_{R-ZNP}}(x,y)=\frac{\sqrt{[(1-\eta )h(y,\mathbf{G}^{T})]^{2}+\lambda _{\eta }||y||_{h}^{2}}}{\lambda _{\eta }}-\frac{(1-\eta
)h(y,\mathbf{G}^{T})}{\lambda _{\eta }},  \label{metric_r-znp}
\end{equation}
with $\lambda _{\eta }=1-(1-\eta )^{2}||\mathbf{G}^{T}||_{h}^{2},$ for any  $%
(x,y)\in TM$. In particular, if $\eta =0$, i.e. $P=C$ in %
\cref{fig_slope_general}, then the last equation yields the standard
Randers metric, which represents the solution to ZNP under the action of a
full gravitational wind $\mathbf{G}^{T}$. On the other hand, for $\eta =1$
we get the non-slippery Riemannian case, i.e. $P=O$ in %
\cref{fig_slope_general}. 

We notice that for every $\eta \in \lbrack 0,1]$ (along the diagonal $\eta =\tilde{\eta}$), the indicatrices of the Randers type metrics $\tilde{F}%
_{_{R-ZNP}}$ are the ``clonated'' ellipsoids because by Zermelo's navigation, 
the Riemannian $h$-circle (ellipsoid) is only rigidly translated by $(1-\eta
)\mathbf{G}^{T}$, under the restriction $(1-\eta )||\mathbf{G}^{T}||_{h}<1$; see, e.g. \cite{chern_shen}. In particular, there is not any anisotropic
deformation of the indicatrices and only rigid translation is applied, while
transiting between two arbitrary problems in R-ZNP.

It is useful to consider some rather simple examples to gain
some intuition. For instance, one can compare S-CROSS with $\tilde{\eta}=0$
and $||\mathbf{G}^{T}||_{h}=0.5$ discussed on the inclined plane in \cite%
{slipperyx} to R-ZNP with $\eta =\tilde{\eta}=0.5$ and $||\mathbf{G}%
^{T}||_{h}=1$. Clearly, the former means the classical ZNP, so the
gravitational wind blows in full, however its force has been halved. Both
scenarios yield the same outcome, representing R-ZNP in the general context
as shown in \cref{fig_square_1}. For the sake of clarity, see, e.g. the
corresponding indicatrices' behaviors (dashed blue) in dimension 2 on the
same planar slope in \cite[Figure 7 (top left)]{slipperyx} concerning S-CROSS and \cref{fig_plane_indi2} (top left) in the general model (\cref{Sec_6.1}). By analogy, the same
effect will be obtained if $\mathbf{G}^{T}$-forces are almost doubled in
both problems, i.e. 0.99 in the former and 1.99 in the latter, as also
illustrated in the above mentioned figures (solid blue). Roughly speaking,
what is reduced in gravitational wind force in ZNP can be regained by traction(s) in R-ZNP, and the other way around in order to keep the
same initial situation.

\begin{figure}[h!]
\centering
~\includegraphics[width=0.49\textwidth]{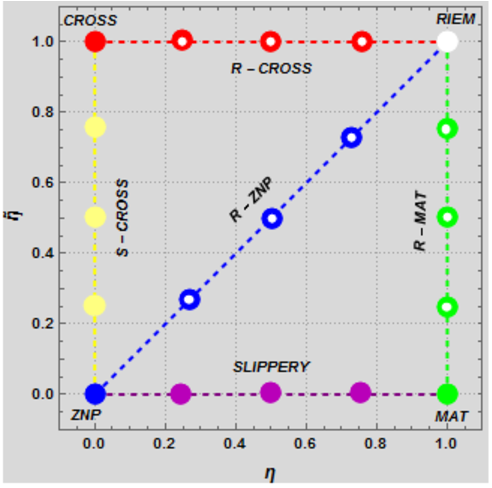}
\caption{The extended problem diagram (square) including the new cases: R-ZNP,
R-MAT and R-CROSS; cf. \cref{fig_square_0}. }
\label{fig_square_1}
\end{figure}

\subsubsection{Reduced MAT}

Now we set $\eta =1$ and $\tilde{\eta}\in \lbrack 0,1]$, calling
this scenario the \textit{reduced Matsumoto slope-of-a-mountain problem}
(R-MAT for short). The along-traction coefficient $\tilde{\eta}$ runs
through the entire range, so RIEM and MAT can be linked directly by the
transition $\mathcal{T}_{1,1}^{1,0}$ which is also included in the slippery slope
model. This situation is indicated by the dashed green segment in %
\cref{fig_square_1}. Thus, the equation of motion for this case reads 
\begin{equation}
v_{_{R-MAT}}=u+(1-\tilde{\eta})\mathbf{G}_{MAT},  \label{vr-mat}
\end{equation}%
for any $\tilde{\eta}\in \lbrack 0,1],$ where $\mathbf{G}_{MAT}$ is the
orthogonal projection of  $\mathbf{G}^{T}$ on $u$. Moreover, it follows
immediately that $P\in{OA}$ in \cref{fig_slope_general}, so the
cross wind is vanished, whilst the effective wind is scaled by the
along-traction coefficient $\tilde{\eta}$. Since the velocities 
$v_{_{R-MAT}}$ and $u$ are always collinear in this case, R-MAT is based on a direction-dependent deformation of the background Riemannian metric $h$ by the vector field $(1-\tilde{\eta})\mathbf{G}_{MAT}.$ By applying the navigation technique as in \cite{chern_shen, colleenshen}, where $||v_{_{R-MAT}}||_{h}=1\pm||(1-\tilde{\eta})\mathbf{G}_{MAT}||_{h}$ (\textquotedblleft +\textquotedblright\ for downhill and
\textquotedblleft -\textquotedblright\ for uphill motion), as well as \cite[Step I]{slipperyx}, the resultant Finsler metric is
obtained explicitly. More precisely, it is the $(\alpha ,\beta )$-metric of Matsumoto type including the along-traction coefficient $\tilde{\eta}\in\lbrack 0,1]$ as a parameter, and denoted by $\tilde{F}_{_{R-MAT}}$. We thus get 
\begin{equation}
\tilde{F}_{_{R-MAT}}(x,y)=\frac{||y||_{h}^{2}}{||y||_{h}+(1-\tilde{\eta})h(y,%
\mathbf{G}^{T})},  \label{Fr-mat}
\end{equation}%
for any $(x,y)\in TM_{0}$, under the strong convexity restriction $(1-\tilde{\eta})||\mathbf{G%
}^{T}||_{h}<\frac{1}{2}.$ In particular, if $\tilde{\eta}=0$, then the last
equation leads to the standard Matsumoto metric (\cite{matsumoto,S-Sabau}), which
stands for the solution to MAT, i.e. $P=A$ in \cref{fig_slope_general}. On
the other edge, for $\tilde{\eta}=1$ we get the Riemannian case, i.e. $%
P=O$ in \cref{fig_slope_general} because the impact of the gravitational
wind is compensated completely during such kind of motion on the slope.
To put it differently, the dead wind coincides with $\mathbf{G}^{T}$ whilst
the active wind is vanished. Therefore, the Finsler metric $\tilde{F}%
_{_{R-MAT}}$ is then simplified to $||y||_{h}$. In contrast to R-ZNP, there is
not any rigid translation of the indicatrix of $\tilde{F}_{_{R-MAT}}$ and
only anisotropic deformation is applied, while transiting between two
arbitrary problems included in R-MAT.  Moreover, analogously to R-ZNP, a change of the force of $\mathbf{G}^{T}$  in MAT (satisfying the strong convexity condition $||\mathbf{G}^{T}||_{h}<\frac{1}{2(1-\tilde{\eta})}$) can be compensated by
the respective change of along-traction coefficient $\tilde{\eta}$ in R-MAT under $\mathbf{G}^{T}$, and vice versa (under the strong convexity conditions) in order to preserve the same initial set-up.

\subsubsection{Reduced CROSS}

As the last special problem mentioned in this subsection we
consider the setting $\tilde{\eta}=1$ and $\eta \in \lbrack 0,1]$. By
analogy to the cases described above, this scenario is named the \textit{%
reduced cross slope problem} (R-CROSS for short) and indicated by the dashed red segment in \cref{fig_square_1}.
As the cross-traction coefficient $\eta $ runs through the full range,
RIEM and CROSS can be linked now by the transition $\mathcal{T}_{1,1}^{0,1}$, becoming the
particular and edge cases in the current set-up. It follows from
the above that the related equation of motion is\footnote{%
For brevity, we write $\mathbf{G}_{MAT}^{\perp}$ for $\text{Proj}_{u^{\perp }}\mathbf{G}^{T}$,  where $\mathbf{G}_{MAT}^{\perp}=-\mathbf{G}_{MAT}+\mathbf{G}^{T}$, i.e. $\overrightarrow{OA^{\prime }}$ in \cref{fig_slope_general} and recall that  $\mathbf{G}_{MAT}$ stands for $\text{Proj}_{u}\mathbf{G}^{T}$, i.e. $\overrightarrow{OA}$ in \cref{fig_slope_general}. 
}   
\begin{equation}
v_{_{R-CROSS}}=u+(1-\eta )\mathbf{G}_{MAT}^{\perp}, 
\label{vr-cross}
\end{equation}%
for any $\eta \in \lbrack 0,1]$. Hence, this yields $P\in {OA^{\prime }}$ in \cref{fig_slope_general} and the effective wind is zeroed
while the cross wind is varying, depending on cross-traction on the
slippery mountain slope. In particular, if $\eta =0$, i.e. $P=A^{\prime }$
in \cref{fig_square_1}, then the solution is given by a cross slope metric (%
\cite{cross}), which belongs to the class of general $(\alpha ,\beta )$%
-metrics in Finsler geometry. On the other end, if $\eta =1$, i.e. $P=O$,
then we are led to the Riemannian metric $h$.

Having solved two previous problems explicitly, one may expect that the similar ease of investigation will be in
the third analogous scenario. Unfortunately, the solution is much more
complicated now. In contrast to R-MAT and R-ZNP, this time we do not get a
``simple'' explicit form of the Finsler metric. 
As shown on further reading, it is signficantly nontrivial and could be
studied individually\footnote{The main proof concerning R-CROSS studied individually would
follow the analogous way as in \cite{cross}, however with the scaling factor 
$(1-\eta)$ for the vector field $\mathbf{G}_{MAT}^{\perp}$, included from the
beginning in the related equations of motion. Similarly, the corresponding
scaling factor in R-MAT is $(1-\tilde{\eta})$, however referring to the
vector field $\mathbf{G}_{MAT}$, as well as $(1-\eta)$ with reference to the
gravitational wind $\mathbf{G}^{T}$ in R-ZNP.}, however the corresponding
solution can be extracted as the particular case from the general result in
\cref{Sec_4}, having the strong convexity condition $||\mathbf{G}^{T}||_h<\frac{1}{2(1-\eta)}$ in this case. Actually, the computational difficulty could have been
expected in advance, since the detailed solution to CROSS has already been
analyzed in \cite{cross} and it stands for the edge case in the current
setting. We decided, however, to mention this new type of the slippery slope problem
here so that the problem diagram form a square clearly after including the last
missing side as shown in \cref{fig_square_1}.

Unlike R-ZNP and R-MAT, the evolution of the indicatrix is based on both
anisotropic deformation and rigid translation combined together, while
transiting between two arbitrary problems included in R-CROSS. Moreover, analogously to the preceding special scenarios, R-CROSS under $\mathbf{G}%
^{T} $ and with the cross-traction coefficient $\eta$ can coincide with CROSS in the presence of weaker gravitational wind, i.e. $(1-\eta)\mathbf{G}^{T}$,  and the other way around; see also \cite[Figure 2 (left)]{cross} in this
regard.

Loosely speaking, ZNP, MAT and CROSS under weaker gravitational winds
correspond respectively to the certain R-ZNP, R-MAT and R-CROSS under
stronger gravitational winds. To complete, all three cases described above
start (or end) at the Riemannian vertex $(1,1)$, including the transition
along the diagonal of the problem square diagram. Likewise, one can also consider
another transition along the second diagonal of $\mathcal{\tilde{S}}$, i.e. $\mathcal{T}_{1,0}^{0,1}$ linking MAT and CROSS. In this case, we have the
relation $\tilde{\eta}=1-\eta $, $\eta \in \lbrack 0,1]$, and so $P\in {AA^{\prime }}$ (\cref{fig_slope_general}), where one traction
coefficient is a non-identity\footnote{Unlike R-ZNP, where $\tilde{\eta}=\eta $.} (linear) function of another. As in R-CROSS, the corresponding resultant metric cannot be explicitly obtained in a simple
form.

\subsection{General case}

\label{Sec_3.2}

Following the above presented reasoning in a more general context we 
ask whether the Riemannan case can also be linked with a problem $%
\mathcal{P}$ defined by a pair $(\eta, \tilde{\eta})$, indicating the
interior of the square diagram $\mathcal{\tilde{S}}$, and not just its
boundary $\partial\tilde{\mathcal{S}}$ (i.e. R-CROSS, SLIPPERY, R-MAT, R-CROSS) or one diagonal, i.e. $\tilde{\eta}=\eta $ (R-ZNP) as untill now. Moreover, we can actually look at the model even more generally, connecting
two arbitrary problems $\mathcal{P}=(\eta, \tilde{\eta})$ and $\mathcal{P}%
^{\prime }=(\eta^{\prime }, \tilde{\eta}^{\prime })$ of the whole diagram,
where $\eta, \eta^{\prime }, \tilde{\eta}, \tilde{\eta}^{\prime }\in[0, 1]$.
Thus, we are going to enter the interior of $\mathcal{\tilde{S}}$ created by four
cornered cases, i.e. RIEM, MAT, ZNP and CROSS (respectively, $O$, $A$, $C$
and $A^{\prime }$ in \cref{fig_slope_general}), and ultimately to cover its
whole area. From such point of view each $\mathcal{P}$ with the traction
coefficients being fixed uniquely defines different and specific navigation
problem on the slope, in which the corresponding equations of motion depend
on both traction coefficients. Therefore, a pair $(\eta, \tilde{\eta})$ determines the
range of compensation of the gravity effects (transverse and longitudinal)
during motion on the slippery slope, in particular, the behavior of time-optimal trajectories. In consequence, we can state that there exist in fact infinitely many slippery slope navigation problems, where the classical Matsumoto's slope-of-a-mountain and Zermelo's navigation under gravitational wind stand for natural but also very particular cases now, among many others.

Furthermore, it will be possible to create the direct links between two
arbitrary problems via the general solution, i.e. the transitions $\mathcal{T%
}_{\mathcal{P}}^{\mathcal{P}^{\prime }}$, where the traction coefficients
are not fixed but varying as in SLIPPERY and S-CROSS thus far. In other
words, one can set up the ranges of the parameters, i.e. $\eta\in[\eta_1,
\eta^{\prime }_1]\subseteq[0, 1], \ \tilde{\eta}\in[\tilde{\eta}_1, \tilde{\eta%
}^{\prime }_1]\subseteq[0, 1]$ and fix the relation $\tilde{\eta}=f(\eta)$,
e.g. $\tilde{\eta}=(\eta-\eta_1)(\tilde{\eta}^{\prime }_1- \tilde{\eta}_1)/(\eta^{\prime
}_1-\eta_1)+ \tilde{\eta}_1$. This is visualized graphically by
a straight line segment (black) along which $\mathcal{P}$ is moving
smoothly, connecting two specific problems $\mathcal{P}_1=(\eta_1, \tilde{\eta}_1), \ \mathcal{P}^{\prime }_1=(\eta^{\prime }_1, \tilde{\eta}^{\prime
}_1)\in \mathcal{\tilde{S}}$ in \cref{fig_square2}. 
Taking a step further, we remark that $\mathcal{P}_1$ and $\mathcal{P}^{\prime }_1$ could also be joined  in the nonlinear ways.

\begin{figure}[h!]
\centering
~\includegraphics[width=0.485\textwidth]{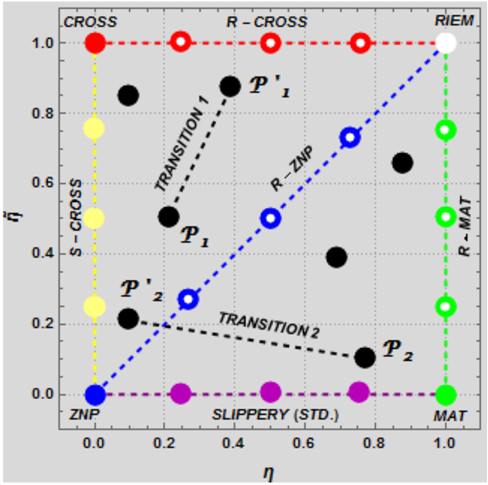}
\caption{The complete problem square diagram $\mathcal{\tilde{S}}%
=[0,1]\times \lbrack 0,1]$ including all navigation problems on the slippery slope under
gravity with fixed (i.e.  a specific point $\mathcal{P}$) and varying (i.e.
a transition $\mathcal{T}_{\mathcal{P}}^{\mathcal{P}^{\prime }}$) cross- and
along-traction coefficients, where $\mathcal{P}=(\protect\eta, \tilde{%
\protect\eta}), \mathcal{P}^{\prime }=(\protect\eta^{\prime }, \tilde{%
\protect\eta}^{\prime })\in\mathcal{\tilde{S}}$. }
\label{fig_square2}
\end{figure}

\begin{figure}[h]
\centering
~\includegraphics[width=0.70\textwidth]{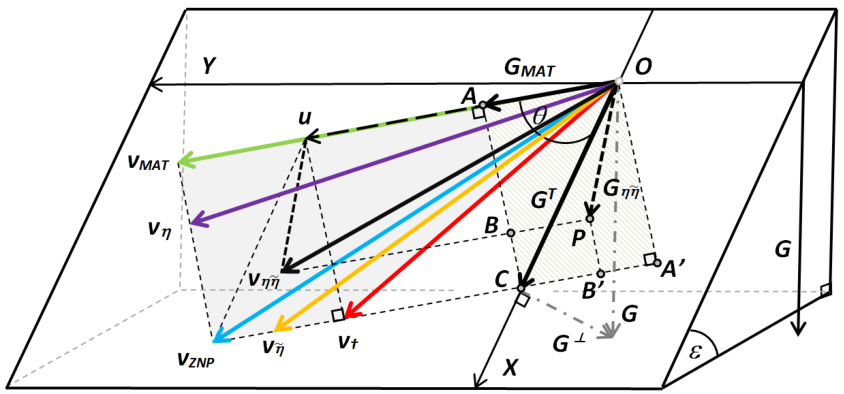}
~\includegraphics[width=0.30\textwidth]{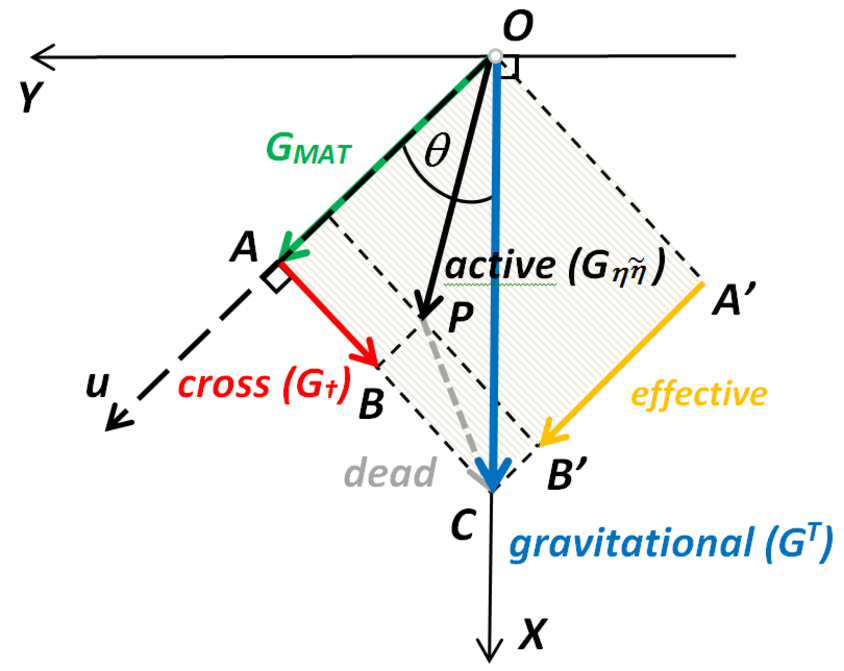}
\caption{Left: A model of a slippery $(\protect\eta, \tilde{\protect\eta})$%
-slope as an inclined plane $M$ of the slope angle $\protect\varepsilon$ in $%
\mathbb{R}^{3}$, under the gravity field $\mathbf{G}=\mathbf{G}^{T}+\mathbf{G%
}^{\perp}$, acting perpendicularly to the horizontal plane (the base of the
slope). The gravitational wind $\mathbf{G}^{T}$ ``blows'' tangentially to $M$
in the steepest downhill direction $X$, and $\mathbf{G}^{\perp}$ is the
component of gravity normal to the slope $M$; $OX\perp OY$, $O\in M$. The
resultant velocity is represented by $v_{\protect\eta\tilde{\protect\eta}}$
(black) and its particular (cornered) cases are: $v_{ZNP}$ (blue), $v_{MAT}$
(green), $v_{\dag}$ (CROSS, red) and $v_{RIEM}=u$ (dashed black). For
comparison, $v_{\protect\eta}$ (purple) and $v_{\tilde{\protect\eta}}$
(yellow) refer to the exemplary SLIPPERY and S-CROSS, respectively. Right: The
decompositions of a gravitational wind $\mathbf{G}^{T}=\protect%
\overrightarrow{OC}$ (blue) on the slippery slope $M$, where $OA\perp
OA^{\prime }$. $\mathbf{G}^{T}$ is a vector sum of an active wind $\mathbf{G}%
_{\protect\eta\tilde{\protect\eta}}=\protect\overrightarrow{OP}$ (black) and
a dead wind $\protect\overrightarrow{PC}$ (dashed grey). The lateral ($%
\mathbf{G}_{\dag}=\protect\overrightarrow{AB}$, a cross wind, red) and
longitudinal ($\protect\overrightarrow{A^{\prime }B^{\prime }}$, an
effective wind, yellow) components of the active wind $\mathbf{G}_{\protect%
\eta\tilde{\protect\eta}}=\protect\overrightarrow{OP}$ w.r.t. $u$ depend in
particular on the cross-traction ($\protect\eta$) and along-traction ($%
\tilde{\protect\eta}$) coefficients, respectively, where $\protect%
\overrightarrow{AB}\perp \protect\overrightarrow{A^{\prime }B^{\prime }}$.
The direction $\protect\theta$ of motion that is not influenced by gravity is 
indicated by the Riemannian self-velocity $u$ (the control vector, dashed
black) and measured clockwise from $OX$, where $||u||_h=1$. }
\label{fig_slope_general}
\end{figure}

Considering equations \eqref{vr-znp}, \eqref{vr-mat} and \eqref{vr-cross}, the general equation of motion is formulated as follows 
\begin{equation}
v_{\eta \tilde{\eta}}=u+\mathbf{G}_{\eta \tilde{\eta}},
\label{eqs_motion_general}
\end{equation}

\ 

\noindent where the active wind $\mathbf{G}_{\eta \tilde{\eta}}$ on the $(\eta ,\tilde{\eta%
})$-slope is defined by the following linear combination
\begin{equation}
\mathbf{G}_{\eta \tilde{\eta}}=(1-\eta )\mathbf{G}_{MAT}^{\perp }+(1-\tilde{%
\eta})\mathbf{G}_{MAT},\quad (\eta ,\tilde{\eta})\in \mathcal{\tilde{S}}
\label{bbbb}
\end{equation}%
which is equivalent to 
\begin{equation}
\mathbf{G}_{\eta \tilde{\eta}}=(\eta -\tilde{\eta})\mathbf{G}_{MAT}+(1-\eta )%
\mathbf{G}^{T}.  \label{wind_general}
\end{equation}%

\ 

\noindent The formula \eqref{eqs_motion_general} implies in particular the equations of motion from the preceding studies of time-optimal navigation on the mountain slopes under gravity\footnote{To be precise, regarding the wind, the equations of motion include the velocity induced on the walker or craft by the gravitational force on the slippery slope.}, namely, $v_{\eta }=u+\mathbf{G}_{\eta }=u+\eta 
\mathbf{G}_{MAT}+(1-\eta )\mathbf{G}^{T}$ as in \cite{slippery} (SLIPPERY), $%
v_{\dag }=v_{01}=u+\mathbf{G}_{\dag }=u-\mathbf{G}_{MAT}+\mathbf{G}^{T}$ as
in \cite{cross} (CROSS), $v_{\tilde{\eta}}=u+\mathbf{G}_{\tilde{\eta}}= 
u-\tilde{\eta}\mathbf{G}_{MAT}+\mathbf{G}^{T}$ as in \cite{slipperyx}
(S-CROSS), $v_{MAT}=v_{10}=u+\mathbf{G}_{MAT}$ as in \cite{matsumoto}, 
$v_{ZNP}=v_{00}=u+\mathbf{G}^{T}$ as in \cite{chern_shen, colleenshen} 
and obviously, $v_{RIEM}=v_{11}=u$. On the other hand, for instance, the
relation $v_{\eta \tilde{\eta}}=u+\mathbf{G}_{\eta \tilde{\eta}}$, where $%
\mathbf{G}_{\eta \tilde{\eta}}=(1-\eta )\mathbf{G}_{MAT}^{\perp }+\eta 
\mathbf{G}_{MAT}
=(2\eta -1)\mathbf{G}_{MAT}+(1-\eta )\mathbf{G}^{T}$ 
linking MAT and CROSS has not been covered so far. Now however, the corresponding time-minimizing solution will come as
the particular case of the general result presented in the sequel. 

Furthemore, substituting a dead wind, i.e. the part of a gravitational wind $\mathbf{G}^{T}$ that is reduced due to traction, being the measure of compensation of gravity effect on $(\eta, \tilde{\eta})$-slope, for the corresponding active wind in the equations of motion related to an arbitrary slippery slope problem, i.e. $v_{\eta\tilde{\eta}}=u-\mathbf{G}%
_{\eta\tilde{\eta} }+\mathbf{G}^{T}$ yields a symmetric scenario to the
original one with respect to the center $(\frac12, \frac12)$ of the problem
square diagram $\mathcal{\tilde{S}}$ (\cref{fig_square2}). Namely, fixing $%
(\eta_0, \tilde{\eta}_0)\in\mathcal{\tilde{S}}$, we have a conversion 
$\mathcal{P}_{\eta_0, \tilde{\eta}_0}\rightleftarrows \mathcal{P}^{\prime
}_{1-\eta_0, 1-\tilde{\eta}_0}$, 
e.g. ZNP $\rightleftarrows$ RIEM, MAT $\rightleftarrows$ CROSS, $\eta_0$%
-SLIPPERY $\rightleftarrows$ (1-$\eta_0$)-R-CROSS, $\tilde{\eta_0}$-S-CROSS $%
\rightleftarrows$ ($1-\tilde{\eta_0}$)-R-MAT. Thus, the active (dead) wind in 
$\mathcal{P}$ stands respectively for the dead (active) wind in $\mathcal{P}^{\prime }$. 
For clarity's sake, see also \cref{fig_slope_general}  in this regard, where the dead wind $\overrightarrow{PC}$ reads

\begin{equation}
\overrightarrow{PC}=\tilde{\eta}\mathbf{G}_{MAT}+\eta\mathbf{G}%
_{MAT}^{\perp}=(\tilde{\eta}-\eta)\mathbf{G}_{MAT}+\eta\mathbf{G}^{T},
\qquad (\eta, \tilde{\eta})\in \mathcal{\tilde{S}}.
\end{equation}

\ 

\noindent It may be worth mentioning that the dead wind in each of the four cornered cases of $\mathcal{\tilde{S}}$, i.e. MAT, RIEM, CROSS and ZNP is equal to $\mathbf{G}_{MAT}^{\perp}$, $\mathbf{G}^{T}$, $\mathbf{G}_{MAT}$ and $\overrightarrow{0}$, respectively.

\section{Proof of Theorem 1.1}
\label{Sec_4}

The goal of this section is to prove \cref{Theorem1}. Some preparations are necessary. Let us consider the $n$-dimensional
Riemannian manifold $(M,h)$, $n>1,$ which represents here a model for a
slippery slope of a mountain. Let $\mathbf{G}^{T}=-\bar{g}\omega ^{\sharp }=-%
\bar{g}h^{ji}\frac{\partial p}{\partial x^{j}}\frac{\partial }{\partial x^{i}%
}$ be the gravitational wind and let $u$ be the self-velocity of a moving craft
on the slope, assuming throughout this section that $||u||_{h}=1$. For now,
let us consider the active wind $\mathbf{G}_{\eta \tilde{\eta}}$ expressed
by \eqref{bbbb}, with $(\eta ,\tilde{\eta})\in \mathcal{\tilde{S}}$,
where $\mathcal{\tilde{S}}=[0,1]\times \lbrack 0,1]$, which vanishes only
when $\eta =\tilde{\eta}=1.$

We pose the navigation problem $\mathcal{P}_{\eta ,\tilde{\eta}}$
on $(M,h)$ under the action of the active wind $\mathbf{G}_{\eta \tilde{\eta}%
}$, where the resultant velocity is $v_{\eta \tilde{\eta}}=u+\mathbf{G}_{\eta 
\tilde{\eta}},$ for any $(\eta ,\tilde{\eta})\in \mathcal{\tilde{S}}.$
Apparently, it looks like a standard Zermelo navigation problem, where the solution is given by a Finsler metric of Randers type if the wind is weak (\cite {colleenshen,chern_shen,Y-Sabau,CJS}). In reality, our navigation problem is quite complicated due to the active wind $\mathbf{G}_{\eta \tilde{\eta}}$.  The key observation is that our ingredient $\mathbf{G}_{\eta \tilde{\eta}},$
written as $\mathbf{G}_{\eta \tilde{\eta}}=(\eta -\tilde{\eta})\mathbf{G}%
_{MAT}+(1-\eta )\mathbf{G}^{T},$ for any $(\eta ,\tilde{\eta})\in \mathcal{%
\tilde{S}}$, is not a priori known because only the gravitational wind $%
\mathbf{G}^{T}$ is given, and the vector $\mathbf{G}_{MAT}$ being the
orthogonal projection of $\mathbf{G}^{T}$ onto the self-velocity $u,$ depends
on the direction of $u.$ Moreover, using \eqref{bbbb}, it follows
immediately that $||\mathbf{G}_{\eta \tilde{\eta}}||_{h}\leq ||\mathbf{G}%
^{T}||_{h},$ for any $(\eta ,\tilde{\eta})\in \mathcal{S}$, where $\mathcal{%
S=\tilde{S}}\smallsetminus \{(1,1)\}.$ Thus, it is appropriate for us to
split the proof of \cref{Theorem1} into two steps including a sequence of
cases and lemmas, which enable us to describe the $(\eta ,\tilde{\eta})$%
-slope metrics besides the necessary and sufficient conditions for their
strong convexity, expressed exclusively with respect to the force of the
gravitational wind $\mathbf{G}^{T}$, for any $(\eta ,\tilde{\eta})\in 
\mathcal{S}$. The first step describes a direction-dependent deformation,
more precisely, the deformation of the background Riemannian metric $h$ by
the vector field $(\eta -\tilde{\eta})\mathbf{G}_{MAT}$. The second step
develops the classical Zermelo navigation, where the indicatrix of
the resulting Finsler metric $F$ of Matsumoto type, provided by the first
step, is rigidly translated by the gravitational vector field $(1-\eta )%
\mathbf{G}^{T}$, under the condition $F(x,-(1-\eta )\mathbf{G}^{T})<1$ which
practically secures that a craft on the slippery mountainside can go forward
in any direction (see \cite{SH,CJS}).

\paragraph{Step I.}

We state that under the assumption that $\left. |\eta -\tilde{\eta}|\text{ }
||\mathbf{G}_{MAT}||_{h}<1,\right. $ the direction-dependent deformation of
the Riemannian metric $h$ by $(\eta -\tilde{\eta})\mathbf{G}_{MAT}$ leads to
a Finsler metric if and only if $||\mathbf{G}^{T}||_{h}<\frac{1}{2|\eta -%
\tilde{\eta}|},$ for any $(\eta ,\tilde{\eta})\in \mathcal{\tilde{S}}%
\smallsetminus \mathcal{L}$, where $\mathcal{L}=\left\{ (\eta ,\tilde{\eta}%
)\in \mathcal{\tilde{S}}\text{ }|\text{ }\eta =\tilde{\eta}\right\}$. Moreover, when $\left. |\eta -\tilde{\eta}|\text{ }||\mathbf{G}%
_{MAT}||_{h}\geq 1\right. $ at some directions, this deformation cannot
afford a Finsler metric.

To this end, we describe the deformation of $h$ by the vector field $(\eta -%
\tilde{\eta})\mathbf{G}_{MAT}$ in terms of the resultant velocity $v=u+(\eta
-\tilde{\eta})\mathbf{G}_{MAT}$, for any $(\eta ,\tilde{\eta})\in \mathcal{%
\tilde{S}}\smallsetminus \mathcal{L}.$ Evidently, if $\eta =\tilde{\eta}$, 
then $v=u$. Furthermore, we need to study the following cases: 1. $\left. |\eta -%
\tilde{\eta}|\text{ }||\mathbf{G}_{MAT}||_{h}<1,\right. $ 2. $\left. |\eta -%
\tilde{\eta}|\text{ }||\mathbf{G}_{MAT}||_{h}=1\right. $ and 3. $\left.
|\eta -\tilde{\eta}|\text{ }||\mathbf{G}_{MAT}||_{h}>1,\right.$ separately.

Let us fix the notation. The desired direction  of self-motion is represented by the angle $\theta$ between $\mathbf{G}^{T}$ and $u$ (clockwise). Since $\mathbf{G}%
_{MAT}=$Proj$_{u}\mathbf{G}^{T}$, the vectors $v,$ $u$ and $\mathbf{G}_{MAT}$
are collinear and once we have denoted by $\bar{\theta}$ the angle between $%
u $ and $\mathbf{G}_{MAT},$ it results that it can only be $0$ or $\pi $. We
notice that when $\theta $ is $\frac{\pi }{2}$ or $\frac{3\pi }{2}$, the
angle $\bar{\theta}$ \ is not determined, i.e. $u$ and $\mathbf{G}^{T}$ are
orthogonal, and $\mathbf{G}_{MAT}$ vanishes.

\noindent

\paragraph{Case 1. $\left. |\protect\eta -\tilde{\protect\eta}|\text{ }||%
\mathbf{G}_{MAT}||_{h}<1\right. $}

Since we have assumed that $\left. |\eta -\tilde{\eta}|\text{ }||\mathbf{G}%
_{MAT}||_{h}<1,\right. $ it is certainly true that the angle between $%
\mathbf{G}^{T}$ and $v$ is also $\theta$ (the vectors $u$ and $v$ point in
the same direction). It is worthwhile to emphasize that due to the assertion 
$\left. |\eta -\tilde{\eta}|\text{ }||\mathbf{G}_{MAT}||_{h}<1\right. $
there is not any direction where the resultant vector $v$ vanishes. Now we
focus on $\bar{\theta},$ namely:

\medskip \noindent i) When $\bar{\theta}=0$ (going downhill), we have $%
\theta \in \lbrack 0,\frac{\pi }{2})\cup (\frac{3\pi }{2},2\pi )$ and the
angle between $\mathbf{G}^{T}$ and $\mathbf{G}_{MAT}$ is either $\theta $ or 
$2\pi -\theta $. Then, we clearly obtain that $||\mathbf{G}_{MAT}||_{h}=||%
\mathbf{G}^{T}||_{h}\cos \theta $ and 
\begin{equation*}
h(v,\mathbf{G}_{MAT})=||v||_{h}||\mathbf{G}_{MAT}||_{h}=||v||_{h}||\mathbf{G}%
^{T}||_{h}\cos \theta =h(v,\mathbf{G}^{T}).
\end{equation*}%
Also, for any $(\eta ,\tilde{\eta})\in \mathcal{\tilde{S}}\smallsetminus 
\mathcal{L}$ it follows that $\left. |\eta -\tilde{\eta}|\text{ }||\mathbf{G}%
^{T}||_{h}\cos \theta <1\right. $ and $\frac{h(v,\mathbf{G}^{T})}{||v||_{h}}<%
\frac{1}{|\eta -\tilde{\eta}|}.$

\medskip \noindent ii) When $\bar{\theta}=\pi $ (going uphill), it turns out
that $\theta \in (\frac{\pi }{2},\frac{3\pi }{2})$ and the angle between $%
\mathbf{G}^{T}$ and $\mathbf{G}_{MAT}$ is $|\pi -\theta |$. Thus, one easily
obtains that $||\mathbf{G}_{MAT}||_{h}=-||\mathbf{G}^{T}||_{h}\cos \theta $
and%
\begin{equation*}
h(v,\mathbf{G}_{MAT})=-||v||_{h}||\mathbf{G}_{MAT}||_{h}=||v||_{h}||\mathbf{G%
}^{T}||_{h}\cos \theta =h(v,\mathbf{G}^{T}).
\end{equation*}%
Moreover, for any $(\eta ,\tilde{\eta})\in \mathcal{\tilde{S}}%
\smallsetminus \mathcal{L}$ we have that $\left. -|\eta -\tilde{\eta}|\text{ 
}||\mathbf{G}^{T}||_{h}\cos \theta <1\right. $ and $-\frac{h(v,\mathbf{G}%
^{T})}{||v||_{h}}<\frac{1}{|\eta -\tilde{\eta}|}.$

To sum up, by both of the above sub-cases and noting that $v=u$ when $\theta
\in \{\frac{\pi }{2},\frac{3\pi }{2}\}$, we get 
\begin{equation}
h(v,\mathbf{G}_{MAT})=h(v,\mathbf{G}^{T})=||v||_{h}||\mathbf{G}%
^{T}||_{h}\cos \theta ,\text{ for any }\theta \in \lbrack 0,2\pi ).
\label{I.1}
\end{equation}%
In addition, we can write the inequality $\left. |\eta -\tilde{\eta}|\text{ }||\mathbf{G}%
_{MAT}||_{h}<1\right. $ as follows 
\begin{equation}
|\eta -\tilde{\eta}|\text{ }||\mathbf{G}^{T}||_{h}|\cos \theta |<1\,\ \ 
\text{\ \ or \ \ }\frac{|h(v,\mathbf{G}^{T})|}{||v||_{h}}<\frac{1}{|\eta -%
\tilde{\eta}|},\text{ }  \label{I.2}
\end{equation}%
for any $\theta \in \lbrack 0,2\pi )$ and $(\eta ,\tilde{\eta})\in \mathcal{%
\tilde{S}}\smallsetminus \mathcal{L}.$ Now, using \eqref{I.1}, we proceed by
straightforward computation starting with $1=||u||_{h}=||v-(\eta -\tilde{\eta%
})\mathbf{G}_{MAT}||_{h}.$ This leads to the equation 
\begin{equation*}
||v||_{h}^{2}-2(\eta -\tilde{\eta})||v||_{h}||\mathbf{G}^{T}||_{h}\cos
\theta -[1-(\eta -\tilde{\eta})^{2}||\mathbf{G}^{T}||_{h}^{2}\cos ^{2}\theta
]=0,
\end{equation*}%
which, due to the first inequality in \eqref{I.2}, admits the unique
positive root 
\begin{equation}
||v||_{h}=1+(\eta -\tilde{\eta})||\mathbf{G}^{T}||_{h}\cos \theta ,\text{ }
\label{I.3}
\end{equation}%
\bigskip for any $\theta \in \lbrack 0,2\pi )$ and $(\eta ,\tilde{\eta})\in 
\mathcal{\tilde{S}}\smallsetminus \mathcal{L}.$

If we introduce the notation $g_{1}(x,v)=||v||_{h}^{2}-||v||_{h}-(\eta -%
\tilde{\eta})h(v,\mathbf{G}^{T})$ and use \eqref{I.1}, then equation %
\eqref{I.3} can be written into its equivalent form $g_{1}(x,v)=0$. Thus,
based on Okubo's method (\cite{matsumoto}), we get the function%
\begin{equation*}
F(x,v)=\frac{||v||_{h}^{2}}{||v||_{h}+(\eta -\tilde{\eta})h(v,\mathbf{G}^{T})%
}
\end{equation*}%
as the solution of the equation $g_{1}(x,\frac{v}{F})=0.$ Moreover, we can
extend $F(x,v)$ to an arbitrary nonzero vector $y\in T_{x}M,$ for any $x\in
M$ because any nonzero $y$ can be expressed as $y=cv,$ $c>0,$ and $F(x,v)=1$%
. Namely, it turns out the following positive homogeneous $C^{\infty }$%
-function on $TM_{0}$ 
\begin{equation}
F(x,y)=\frac{||y||_{h}^{2}}{||y||_{h}+(\eta -\tilde{\eta})h(y,\mathbf{G}^{T})%
}\text{ },\text{ \ for any \ }(\eta ,\tilde{\eta})\in \mathcal{\tilde{S}}%
\smallsetminus \mathcal{L}.  \label{eta_mat}
\end{equation}

There is still a certain amount of properties which is arising regarding
function $F(x,y)$ obtained in \eqref{eta_mat}. Before all else, we claim
that our assertion $\left. |\eta -\tilde{\eta}|\text{ }||\mathbf{G}%
_{MAT}||_{h}<1\right. $ is a necessary and sufficient condition for $F(x,y)$
to be positive on all $TM_{0}.$ In order to prove this let us observe that
the positivity of \eqref{eta_mat} on $TM_{0}$ means that 
\begin{equation}
||y||_{h}+(\eta -\tilde{\eta})h(y,\mathbf{G}^{T})>0,  \label{ineq}
\end{equation}%
for all nonzero $y$ and any $(\eta ,\tilde{\eta})\in \mathcal{\tilde{S}}%
\smallsetminus \mathcal{L}.$ If the positivity is achieved on $TM_{0},$ we
can replace $y$ with $\pm \mathbf{G}^{T}\neq 0$ in \eqref{ineq} and thus,
it follows that $\left. |\eta -\tilde{\eta}|\text{ }||\mathbf{G}%
^{T}||_{h}<1.\right. $ Having the inequality $||\mathbf{G}_{MAT}||_{h}\leq ||%
\mathbf{G}^{T}||_{h}$ in any direction (note that $||\mathbf{G}%
_{MAT}||_{h}=||\mathbf{G}^{T}||_{h}|\cos \theta |,$ for any $\theta \in
\lbrack 0,2\pi )$), it yields that $\left. |\eta -\tilde{\eta}|\text{ }||%
\mathbf{G}_{MAT}||_{h}<1\right. $ on all $TM_{0}.$ Conversely, if $\left.
|\eta -\tilde{\eta}|\text{ }||\mathbf{G}_{MAT}||_{h}<1\right. $ on all $%
TM_{0}$ (the case $\mathbf{G}_{MAT}=0$ is also included), we have $%
\frac{|h(y,\mathbf{G}^{T})|}{||y||_{h}}<\frac{1}{|\eta -\tilde{\eta}|}$ for
any nonzero $y,$ which gives~\eqref{ineq}. Thus, the claim that $F(x,y)$ is
positive on $TM_{0}$ is proved.

From now on, we use the notation as in \cite{slippery,cross,slipperyx}, that is 
\begin{equation}
\alpha ^{2}=||y||_{h}^{2}=h_{ij}y^{i}y^{j}\text{ \ \ \ \ and \ \ }\beta =-%
\frac{1}{\bar{g}}h(y,\mathbf{G}^{T})=h(y,\omega ^{\sharp })=b_{i}y^{i},
\label{NOT}
\end{equation}%
$\alpha =\alpha (x,y),$ $\beta =\beta (x,y)$ and $||\beta ||_{h}=||\omega
^{\sharp }||_{h}.$ We notice that the differential $1$-form $\beta $ is
closed, i.e. $s_{ij}=0$, because it includes the gravitational wind $\mathbf{%
G}^{T}$ which is a scaled gradient vector field, i.e. $\mathbf{G}^{T}=-\bar{%
g}\omega ^{\sharp }=-\bar{g}h^{ji}\frac{\partial p}{\partial x^{j}}\frac{%
\partial }{\partial x^{i}}$; for more details, see \cite[Lemma 4.3]{slippery}.

With the notation \eqref{NOT}, we can express the function \eqref{eta_mat} as  
\begin{equation}
F(x,y)=\frac{\alpha ^{2}}{\alpha -(\eta -\tilde{\eta})\bar{g}\beta },\text{
for any \ }(\eta ,\tilde{\eta})\in \mathcal{\tilde{S}}\smallsetminus 
\mathcal{L},  \label{Matsumoto_eta}
\end{equation}%
which shows that it is of Matsumoto type having the explicit indicatrix%
\begin{equation*}
I_{F}=\left\{ (x,y)\in TM_{0}\text{ }|\text{ }\alpha ^{2}[\alpha -(\eta -%
\tilde{\eta})\bar{g}\beta ]^{-1}=1\right\} \subset TM.
\end{equation*}%
Since $y=0$ does not lie in the closure of the indicatrix $I_{F}$, we can
extend $F(x,y)$ continuously to all $TM,$ i.e. $F(x,0)=0$ for any $x\in M$
(see \cite{CJS}).

The function \eqref{Matsumoto_eta} seems to be a promising
Finsler metric. In order to make sure of this, we are going to establish the
necessary and sufficient conditions for the strong convexity of the
indicatrix $I_{F},$ for any $(\eta ,\tilde{\eta})\in \mathcal{\tilde{S}}%
\smallsetminus \mathcal{L}$. We can write $F(x,y)=\alpha \phi (s),$ where $%
\phi (s)=\frac{1}{1-(\eta -\tilde{\eta})\bar{g}s}\,$\ with $s=\frac{\beta }{%
\alpha }$, and the second inequality in \eqref{I.2} is actually $|s|<\frac{1%
}{|\eta -\tilde{\eta}|\bar{g}}$, for arbitrary nonzero $y\in T_{x}M$ and $%
x\in M$. Thus, for every $(\eta ,\tilde{\eta})\in \mathcal{\tilde{S}}%
\smallsetminus \mathcal{L}$ it follows that $\phi $ is a positive $C^{\infty
}$-function on the open interval $\mathcal{I}=\left( -(|\eta -\tilde{\eta}|%
\bar{g})^{-1},(|\eta -\tilde{\eta}|\bar{g})^{-1}\right) $. In the sequel, we
collect the desired properties for $\phi (s),$ with $|s|<\frac{1}{|\eta -%
\tilde{\eta}|\bar{g}}$, and we control the force of the gravitational wind $%
\mathbf{G}^{T}$ via the variable $s.$

\begin{lemma}
\label{Lema1} Let $\phi $ be the function given by $\phi (s)=\frac{1}{%
1-(\eta -\tilde{\eta})\bar{g}s}$ with $s\in \mathcal{I}$. For any $(\eta ,%
\tilde{\eta})\in \mathcal{\tilde{S}}\smallsetminus \mathcal{L}$, the
following statements are equivalent:

\begin{itemize}
\item[i)] $\phi (s)-s\phi ^{\prime }(s)+(b^{2}-s^{2})\phi ^{\prime \prime
}(s)>0$, where $b=||\omega ^{\sharp }||_{h};$

\item[ii)] $|s|\leq b<b_{0}$, where $b_{0}=\frac{1}{2|\eta -\tilde{\eta}|%
\bar{g}}$;

\item[iii)] $||\mathbf{G}^{T}||_{h}<\frac{1}{2|\eta -\tilde{\eta}|}.$
\end{itemize}
\end{lemma}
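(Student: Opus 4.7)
The strategy is to reduce everything to a single algebraic inequality by direct computation, and then analyse it as a quadratic in a single scaling parameter. Introduce the abbreviation $k := (\eta-\tilde{\eta})\bar{g}$, so that $\phi(s) = (1-ks)^{-1}$. The first step is to compute $\phi'(s) = k(1-ks)^{-2}$ and $\phi''(s) = 2k^2(1-ks)^{-3}$, bring the expression in i) to a common denominator, and observe that
\[
\phi(s)-s\phi'(s)+(b^{2}-s^{2})\phi''(s)
=\frac{1-3ks+2k^{2}b^{2}}{(1-ks)^{3}}.
\]
Since $s\in\mathcal{I}$ forces $|ks|<1$, the denominator is positive, so i) reduces to positivity of the numerator $N(s):=1-3ks+2k^{2}b^{2}$ for all $|s|\le b$.

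The second step is to show i)$\Leftrightarrow$ii). The function $N$ is linear in $s$, so its minimum on $[-b,b]$ is attained at $s=\operatorname{sgn}(k)\,b$ and equals $1-3|k|b+2k^{2}b^{2}$. I would factor this as
\[
1-3|k|b+2k^{2}b^{2}=(1-2|k|b)(1-|k|b).
\]
The containment of the admissible $s$-interval $[-b,b]\subseteq\mathcal{I}$ forces $|k|b<1$, so the factor $(1-|k|b)$ is positive; hence i) is equivalent to $(1-2|k|b)>0$, i.e. $b<\tfrac{1}{2|k|}=b_{0}$. Combined with the standard Cauchy--Schwarz bound $|s|=|\beta|/\alpha\le \|\omega^{\sharp}\|_{h}=b$, this is precisely condition ii).

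Finally, ii)$\Leftrightarrow$iii) is a direct rescaling: using $\mathbf{G}^{T}=-\bar{g}\omega^{\sharp}$ gives $\|\mathbf{G}^{T}\|_{h}=\bar{g}\,b$, and substituting into $b<b_{0}=\frac{1}{2|\eta-\tilde{\eta}|\bar{g}}$ yields iii) at once. The only delicate point in the whole argument is recognising that the quadratic $(1-2|k|b)(1-|k|b)$ has two regimes of positivity ($|k|b<1/2$ or $|k|b>1$), and that the compatibility constraint $|k|b<1$ inherited from the domain $\mathcal{I}$ selects the correct branch $|k|b<1/2$; that is the step where one must be careful not to accept the spurious regime $|k|b>1$.
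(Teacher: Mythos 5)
Your proof is correct and takes essentially the same route as the paper: both bring the expression in i) over the common denominator $[1-(\eta-\tilde{\eta})\bar{g}s]^{3}$, detect the threshold by looking at $s=\pm b$ (your endpoint value $(1-2|k|b)(1-|k|b)$ is exactly what the paper's numerator $[1-(\eta-\tilde{\eta})\bar{g}s][1-2(\eta-\tilde{\eta})\bar{g}s]+2(b^{2}-s^{2})(\eta-\tilde{\eta})^{2}\bar{g}^{2}$ becomes when $b^{2}-s^{2}=0$), and both get ii)$\Leftrightarrow$iii) from $\|\mathbf{G}^{T}\|_{h}=\bar{g}b$. The only cosmetic difference is that you expand the numerator into a function linear in $s$ and minimize over $[-b,b]$, whereas the paper keeps the product form and, for ii)$\Rightarrow$i), simply drops the nonnegative $(b^{2}-s^{2})$-term; your use of $|k|b<1$ to discard the branch $|k|b>1$ corresponds to the paper's standing Case 1 assumption $|\eta-\tilde{\eta}|\,\|\mathbf{G}_{MAT}\|_{h}<1$ in all directions.
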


\begin{proof}
By using the Cauchy-Schwarz inequality $|h(y,\omega ^{\sharp })|\leq
||y||_{h}||\omega ^{\sharp }||_{h}$ it follows that $|s|\leq ||\omega
^{\sharp }||_{h}=b,$ for any nonzero $y\in T_{x}M$ and $x\in M.$ Let us now
focus on $(b^{2}-s^{2})\phi ^{\prime \prime }(s)$. Due to $|s|<\frac{1}{%
|\eta -\tilde{\eta}|\bar{g}}$, one has $(b^{2}-s^{2})\phi ^{\prime
\prime }(s)=(b^{2}-s^{2})\frac{2(\eta -\tilde{\eta})^{2}\bar{g}^{2}}{%
[1-(\eta -\tilde{\eta})\bar{g}s]^{3}}\geq 0.$ Thus, the minimum value of $%
(b^{2}-s^{2})\phi ^{\prime \prime }(s)$ is $0$ and it is achieved when $%
|s|=b,$ for any $(\eta ,\tilde{\eta})\in \mathcal{\tilde{S}}\smallsetminus 
\mathcal{L}$. Moreover, a simple computation leads to%
\begin{equation}
\phi (s)-s\phi ^{\prime }(s)+(b^{2}-s^{2})\phi ^{\prime \prime }(s)=\frac{%
[1-(\eta -\tilde{\eta})\bar{g}s][1-2(\eta -\tilde{\eta})\bar{g}%
s]+2(b^{2}-s^{2})(\eta -\tilde{\eta})^{2}\bar{g}^{2}}{[1-(\eta -\tilde{\eta})%
\bar{g}s]^{3}}.  \label{IEQ}
\end{equation}

To prove i) $\Rightarrow $ ii), we assume that $\phi (s)-s\phi ^{\prime
}(s)+(b^{2}-s^{2})\phi ^{\prime \prime }(s)>0.$ Let us take $s=\pm b$ in %
\eqref{IEQ}. It follows that $1\mp 2(\eta -\tilde{\eta})\bar{g}b>0$, and
then $b<\frac{1}{2|\eta -\tilde{\eta}|\bar{g}}.$ Therefore, $|s|\leq b<\frac{%
1}{2|\eta -\tilde{\eta}|\bar{g}}$ which is precisely the required ii).
Conversely, let us assume that $|s|\leq b<\frac{1}{2|\eta -\tilde{\eta}|\bar{%
g}}$. Due to \eqref{IEQ}, we get%
\begin{equation*}
\phi (s)-s\phi ^{\prime }(s)+(b^{2}-s^{2})\phi ^{\prime \prime }(s)\geq 
\frac{\lbrack 1-(\eta -\tilde{\eta})\bar{g}s][1-2(\eta -\tilde{\eta})\bar{g}%
s]}{[1-(\eta -\tilde{\eta})\bar{g}s]^{3}}=\frac{1-2(\eta -\tilde{\eta})\bar{g%
}s}{[1-(\eta -\tilde{\eta})\bar{g}s]^{2}}~>~0,
\end{equation*}%
for any $(\eta ,\tilde{\eta})\in \mathcal{\tilde{S}}\smallsetminus \mathcal{L%
}.$

Now we prove the implication iii) $\Rightarrow $ ii). Since\textbf{\ }$||%
\mathbf{G}^{T}||_{h}<\frac{1}{2|\eta -\tilde{\eta}|}$ and $|s|\leq ||\omega
^{\sharp }||_{h}=b=\frac{1}{\bar{g}}||\mathbf{G}^{T}||_{h}$, it turns out $%
|s|\leq b<\frac{1}{2|\eta -\tilde{\eta}|\bar{g}}.$ The implication ii) $%
\Rightarrow $ iii) is trivial. \end{proof}

It is worth mentioning that the statement $|s|\leq b<\frac{1}{2|\eta -\tilde{%
\eta}|\bar{g}}$ also implies that for any $(\eta ,\tilde{\eta})\in \mathcal{%
\tilde{S}}\smallsetminus \mathcal{L}$, $\phi (s)-s\phi ^{\prime }(s)>0$.
By the above findings and applying \cite[Lemma 1.1.2]{chern_shen} and %
\cref{Prop1}, we have stated the following result.

\begin{lemma}
\label{Lema2}For any $(\eta ,\tilde{\eta})\in \mathcal{\tilde{S}}%
\smallsetminus \mathcal{L}$, $F(x,y)=\frac{\alpha ^{2}}{\alpha -(\eta -%
\tilde{\eta})\bar{g}\beta }$ is a Finsler metric if and only if $||\mathbf{G}%
^{T}||_{h}<\frac{1}{2|\eta -\tilde{\eta}|}$.
\end{lemma}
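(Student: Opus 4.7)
The plan is to apply the general Yu criterion for $(\alpha,\beta)$-metrics (\cref{Prop1}) and then translate its inequalities into the stated gravitational-wind bound via \cref{Lema1}. First I would observe that $F(x,y)=\alpha^{2}/[\alpha-(\eta-\tilde{\eta})\bar{g}\beta]$ may be written as $F=\alpha\,\phi(s)$ with $\phi(s)=1/[1-(\eta-\tilde{\eta})\bar{g}s]$ and $s=\beta/\alpha$, so that $\phi$ depends on $s$ alone. Under the running assumption $|\eta-\tilde{\eta}|\,||\mathbf{G}_{MAT}||_h<1$ (which is automatic as soon as $|\eta-\tilde{\eta}|\,||\mathbf{G}^{T}||_h<1$, since $||\mathbf{G}_{MAT}||_h\leq||\mathbf{G}^{T}||_h$), the denominator of $F$ never vanishes on $TM_0$, so $F$ is positive and $C^\infty$ on $TM_0$, and clearly $1$-homogeneous in $y$. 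What remains is the strong convexity, i.e.~positive definiteness of the Hessian $g_{ij}=\tfrac12(F^2)_{y^iy^j}$.

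For strong convexity I would invoke \cref{Prop1} directly: $F$ is a Finsler metric if and only if $\phi-s\phi'>0$ together with $\phi-s\phi'+(b^{2}-s^{2})\phi''>0$ when $n\geq 3$, or the second inequality alone when $n=2$. The first inequality is in fact a consequence of the second in our specific setup: this is exactly the observation recorded in the remark immediately following \cref{Lema1}. So the only condition one has to control is the determinant-type inequality
\[
\phi(s)-s\phi'(s)+(b^{2}-s^{2})\phi''(s)>0,\qquad |s|\leq b,
\]
uniformly in $(x,y)\in TM_0$.

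For the sufficiency direction, I would assume $||\mathbf{G}^{T}||_h<\tfrac{1}{2|\eta-\tilde{\eta}|}$. By \cref{Lema1}, this is equivalent to $|s|\leq b<b_{0}=(2|\eta-\tilde{\eta}|\bar{g})^{-1}$ and to the above determinant inequality holding for every $s\in[-b,b]$. Combined with the remark giving $\phi-s\phi'>0$, both hypotheses of \cref{Prop1} are satisfied, so $F$ is a Finsler metric. (A standard smooth-extension argument of the kind encoded in \cite[Lemma~1.1.2]{chern_shen}, already cited in the text, lets one conclude strong convexity on all of $TM_0$ from its validity at the level of $(b^{2},s)$.) For the necessity direction, I would run the same equivalence backwards: if $F$ is a Finsler metric, then \cref{Prop1} forces $\phi-s\phi'+(b^{2}-s^{2})\phi''>0$, and by the equivalence (i)$\Leftrightarrow$(iii) of \cref{Lema1} this is exactly $||\mathbf{G}^{T}||_h<\tfrac{1}{2|\eta-\tilde{\eta}|}$.

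The main obstacle is conceptual rather than computational: all the hard algebra has already been done inside \cref{Lema1}, so the proof essentially reduces to a correct invocation of Yu's criterion. The point requiring care is to make sure one does not need to verify the $n\geq 3$ condition $\phi-s\phi'>0$ separately through independent work — but, as noted, this is handled by the remark following \cref{Lema1}, so the single scalar bound $||\mathbf{G}^{T}||_h<1/(2|\eta-\tilde{\eta}|)$ really is both necessary and sufficient on every $TM_0$, for every admissible pair $(\eta,\tilde{\eta})\in\mathcal{\tilde{S}}\smallsetminus\mathcal{L}$.
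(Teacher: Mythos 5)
Your proposal is correct and follows essentially the same route as the paper: the paper also deduces \cref{Lema2} directly from \cref{Lema1} (whose equivalence i)$\Leftrightarrow$iii) supplies the bound $||\mathbf{G}^{T}||_{h}<\tfrac{1}{2|\eta-\tilde{\eta}|}$), the remark that $|s|\leq b<\tfrac{1}{2|\eta-\tilde{\eta}|\bar{g}}$ already yields $\phi-s\phi^{\prime}>0$, and an appeal to \cref{Prop1} together with \cite[Lemma 1.1.2]{chern_shen}. Nothing essential is missing.
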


\noindent Therefore, once we have \cref{Lema2}, we conclude that the
indicatrix $I_{F}$ is strongly convex if and only if $||\mathbf{G}^{T}||_{h}<%
\frac{1}{2|\eta -\tilde{\eta}|}$, for any $(\eta ,\tilde{\eta})\in \mathcal{%
\tilde{S}}\smallsetminus \mathcal{L}.$

\noindent

\paragraph{Case 2. $\left. |\protect\eta -\tilde{\protect\eta}|\text{ }||%
\mathbf{G}_{MAT}||_{h}=1\right. $}

We start by assuming that$\left. |\eta -\tilde{\eta}|\text{ }||\mathbf{G}%
_{MAT}||_{h}=1,\right. $ for any $(\eta ,\tilde{\eta})\in \mathcal{\tilde{S}}%
\smallsetminus \mathcal{L}.$ Observe that a traverse of the mountain, i.e.
when $\theta \in \{\frac{\pi }{2},\frac{3\pi }{2}\}$, cannot be followed
here. Indeed, when $\theta \in \{\frac{\pi }{2},\frac{3\pi }{2}\}$, $\mathbf{%
G}_{MAT}$ vanishes which contradicts our assumption. Moreover, since $||%
\mathbf{G}_{MAT}||_{h}\leq ||\mathbf{G}^{T}||_{h}$, it follows that $||%
\mathbf{G}^{T}||_{h}\geq \frac{1}{|\eta -\tilde{\eta}|}$, for any $(\eta ,%
\tilde{\eta})\in \mathcal{\tilde{S}}\smallsetminus \mathcal{L}.$ In the
sequel, we have to analyze the aforementioned possibilities for $\bar{\theta}
$:

\medskip \noindent i) when $\bar{\theta}=0,$ we clearly have $\theta \in
\lbrack 0,\frac{\pi }{2})\cup (\frac{3\pi }{2},2\pi )$ and $||\mathbf{G}%
_{MAT}||_{h}=||\mathbf{G}^{T}||_{h}\cos \theta .$ In addition, our
assumption implies that $u=|\eta -\tilde{\eta}|\mathbf{G}_{MAT}$, and thus 
\begin{equation*}
v=(\eta -\tilde{\eta}+|\eta -\tilde{\eta}|)\mathbf{G}_{MAT}=\left\{ 
\begin{array}{cc}
2(\eta -\tilde{\eta})\mathbf{G}_{MAT}, & \text{if }\eta >\tilde{\eta} \\ 
0, & \text{if }\eta <\tilde{\eta}%
\end{array}%
\right. .
\end{equation*}%
Based on this, we next get

(a) if $\eta >\tilde{\eta}$, then $||v||_{h}=2(\eta -\tilde{\eta})||\mathbf{G}%
_{MAT}||_{h}=2$ and%
\begin{equation*}
h(v,\mathbf{G}_{MAT})=||v||_{h}||\mathbf{G}_{MAT}||_{h}=||v||_{h}||\mathbf{G}%
^{T}||_{h}\cos \theta =h(v,\mathbf{G}^{T});
\end{equation*}

(b) if $\eta <\tilde{\eta},$ the resultant velocity $v$ vanishes, while
attempting to go down the slope.

\medskip \noindent ii) when $\bar{\theta}=\pi ,$ then $\theta \in (\frac{\pi 
}{2},\frac{3\pi }{2})$ and thus $u=-|\eta -\tilde{\eta}|\mathbf{G}_{MAT}$
and $||\mathbf{G}_{MAT}||_{h}=-||\mathbf{G}^{T}||_{h}\cos \theta $. It turns
out that 
\begin{equation*}
v=(\eta -\tilde{\eta}-|\eta -\tilde{\eta}|)\mathbf{G}_{MAT}=\left\{ 
\begin{array}{cc}
0, & \text{if }\eta >\tilde{\eta} \\ 
2(\eta -\tilde{\eta})\mathbf{G}_{MAT}, & \text{if }\eta <\tilde{\eta}%
\end{array}%
\right. .
\end{equation*}%
Also, this must be splitted into

(a) if $\eta >\tilde{\eta},$ the resultant velocity $v$ vanishes, while
attempting to go up the slope;

(b) if $\eta <\tilde{\eta},$ then $||v||_{h}=-2(\eta -\tilde{\eta})||\mathbf{%
G}_{MAT}||_{h}=2$ and%
\begin{equation*}
h(v,\mathbf{G}_{MAT})=-||v||_{h}||\mathbf{G}_{MAT}||_{h}=||v||_{h}||\mathbf{G%
}^{T}||_{h}\cos \theta =h(v,\mathbf{G}^{T}).
\end{equation*}

 Summing up the above findings, when $v$ does not vanish, we have 
$||v||_{h}=2$ and among the directions corresponding to $\theta$, only such
directions for which $\cos \theta =\frac{1}{(\eta -\tilde{\eta})\text{ }||%
\mathbf{G}^{T}||_{h}}$ (i.e. $\frac{h(v,\mathbf{G}^{T})}{||v||_{h}}=\frac{1}{\eta -\tilde{\eta}}$), for any $(\eta ,\tilde{\eta})\in \mathcal{\tilde{S}}%
\smallsetminus \mathcal{L}$, can be followed in this case. Let us consider $g_{2}(x,v)=0,$ where $g_{2}(x,v)=||v||_{h}-2.$ By Okubo's method \cite%
{matsumoto}, we get the function%
\begin{equation}
F(x,v)=\frac{1}{2}||v||_{h},  \label{KROP}
\end{equation}%
as the solution of the equation $g_{2}(x,\frac{v}{F})=0.$ The extension of $%
F(x,v)$ to an arbitrary nonzero vector $y\in \mathcal{A}_{x}=\mathcal{A\
\cap \ }T_{x}M,$ for any $x\in M,$ is $F(x,y)=\frac{1}{2}||y||_{h}$, where $%
\mathcal{A}=\{(x,y)~\in ~TM_{0}\ |\ ||y||_{h}-(\eta -\tilde{\eta})h(y,%
\mathbf{G}^{T})=0\}$. Since $||\mathbf{G}%
^{T}||_{h}\geq \frac{1}{|\eta -\tilde{\eta}|}$, it follows that $\mathbf{G}%
^{T}\in \mathcal{A}_{x}$ if and only if $\mathbf{G}^{T}=\mathbf{G}_{MAT}$
and $\eta >\tilde{\eta}$ (here the angle $\theta $ can only be $0)$ and $-%
\mathbf{G}^{T}\in \mathcal{A}_{x}$ if and only if $\mathbf{G}^{T}=\mathbf{G}%
_{MAT}$ and $\eta <\tilde{\eta}$ (here the angle $\theta $ can only be $\pi $%
). Therefore, this case cannot provide a proper Finsler metric defined on
the whole tangent bundle. 

\noindent

\paragraph{Case 3. $\left. |\protect\eta -\tilde{\protect\eta}|\text{ }||%
\mathbf{G}_{MAT}||_{h}>1\right. $}

The remaining case is $\left. |\eta -\tilde{\eta}|\text{ }||\mathbf{G}%
_{MAT}||_{h}>1\right. $. On one hand, it implies that $||\mathbf{G}%
^{T}||_{h}>\frac{1}{|\eta -\tilde{\eta}|},$ as well as the fact that $\theta 
$ cannot be $\frac{\pi }{2}$ or $\frac{3\pi }{2}$. Indeed, if $\theta \in \{%
\frac{\pi }{2},\frac{3\pi }{2}\}$, then $v=u$ and thus, $\mathbf{G}_{MAT}$ vanishes,
which contradicts our assumption. On the other hand, it follows that for any 
$(\eta ,\tilde{\eta})\in \mathcal{\tilde{S}}\smallsetminus \mathcal{L}$, the
resultant velocity $v$ and $(\eta -\tilde{\eta})\mathbf{G}_{MAT}$ point in
the same direction (downhill when $\eta >\tilde{\eta}$ and uphill when $%
\eta <\tilde{\eta}$) and $h(v,\mathbf{G}_{MAT})=\frac{|\eta -\tilde{\eta}|}{%
\eta -\tilde{\eta}}||v||_{h}||\mathbf{G}_{MAT}||_{h}$. Moreover, $\left.
|\eta -\tilde{\eta}|\text{ }||\mathbf{G}_{MAT}||_{h}>1\right. $ states that
there is not any direction where the resultant vector $v$ vanishes. Again,
we have to take into consideration both possibilities for $\bar{\theta}.$
Namely,

\medskip \noindent i) when $\bar{\theta}=0$, then $\theta \in \lbrack 0,%
\frac{\pi }{2})\cup (\frac{3\pi }{2},2\pi ).$ In particular, we have $||%
\mathbf{G}_{MAT}||_{h}=||\mathbf{G}^{T}||_{h}\cos \theta $ and due to the
required assumption, it follows that $|\eta -\tilde{\eta}|$ $||\mathbf{G}%
^{T}||_{h}\cos \theta >1.$ Two possibilities must still be analyzed:

(a) if $\eta >\tilde{\eta},$ then $\measuredangle (\mathbf{G}^{T},v)\in
\{\theta ,2\pi -\theta \}.$ Thus, we get%
\begin{equation*}
h(v,\mathbf{G}_{MAT})=||v||_{h}||\mathbf{G}_{MAT}||_{h}=||v||_{h}||\mathbf{G}%
^{T}||_{h}\cos \theta =h(v,\mathbf{G}^{T})
\end{equation*}%
and $\frac{h(v,\mathbf{G}^{T})}{||v||_{h}}>\frac{1}{\eta -\tilde{\eta}};$

(b) if $\eta <\tilde{\eta},$ then $\measuredangle (\mathbf{G}^{T},v)\in
\{\pi +\theta ,\theta -\pi \}$ as well as%
\begin{equation*}
h(v,\mathbf{G}_{MAT})=-||v||_{h}||\mathbf{G}_{MAT}||_{h}=-||v||_{h}||\mathbf{%
G}^{T}||_{h}\cos \theta =h(v,\mathbf{G}^{T})
\end{equation*}%
and $-\frac{h(v,\mathbf{G}^{T})}{||v||_{h}}>\frac{1}{\tilde{\eta}-\eta }.$

\medskip \noindent ii) when $\bar{\theta}=\pi ,$ one gets $\theta \in (%
\frac{\pi }{2},\frac{3\pi }{2}).$ In particular, we have $||\mathbf{G}%
_{MAT}||_{h}=-||\mathbf{G}^{T}||_{h}\cos \theta $ and $|\eta -\tilde{\eta}|$ 
$||\mathbf{G}^{T}||_{h}\cos \theta <-1$ since $|\eta -\tilde{\eta}|$ $||%
\mathbf{G}_{MAT}||_{h}>1.$ Moreover,

(a) if $\eta >\tilde{\eta},$ then $\measuredangle (\mathbf{G}^{T},v)=|\theta
-\pi |.$ Thus,%
\begin{equation*}
h(v,\mathbf{G}_{MAT})=||v||_{h}||\mathbf{G}_{MAT}||_{h}=-||v||_{h}||\mathbf{G%
}^{T}||_{h}\cos \theta =h(v,\mathbf{G}^{T})
\end{equation*}%
and $\frac{h(v,\mathbf{G}^{T})}{||v||_{h}}>\frac{1}{\eta -\tilde{\eta}}$;

(b) if $\eta <\tilde{\eta},$ then the angle between $\mathbf{G}^{T}$ and $v$
is also $\theta $. In consequence, it turns out that%
\begin{equation*}
h(v,\mathbf{G}_{MAT})=-||v||_{h}||\mathbf{G}_{MAT}||_{h}=||v||_{h}||\mathbf{G%
}^{T}||_{h}\cos \theta =h(v,\mathbf{G}^{T})
\end{equation*}%
and $-\frac{h(v,\mathbf{G}^{T})}{||v||_{h}}>\frac{1}{\tilde{\eta}-\eta }.$

By combining the above possibilities and since $\mathbf{G}_{MAT}$ cannot be vanished,
we get 
\begin{equation}
h(v,\mathbf{G}_{MAT})=h(v,\mathbf{G}^{T})=\frac{|\eta -\tilde{\eta}|}{\eta -%
\tilde{\eta}}||v||_{h}||\mathbf{G}^{T}||_{h}|\cos \theta |,\ \text{for any}\
\theta \in \lbrack 0,2\pi )\smallsetminus \{\pi /2,3\pi /2\},  \label{II.1}
\end{equation}%
and the condition $\left. |\eta -\tilde{\eta}|\text{ }||\mathbf{G}%
_{MAT}||_{h}>1\right. $ is equivalent to 
\begin{equation}
|\cos \theta |>\frac{1}{|\eta -\tilde{\eta}|\text{ }||\mathbf{G}^{T}||_{h}}%
\,\ \ \text{\ \ or \ \ }\left\{ 
\begin{array}{cc}
\frac{h(v,\mathbf{G}^{T})}{||v||_{h}}>\frac{1}{\eta -\tilde{\eta}}, & \text{%
if }\eta >\tilde{\eta} \\ 
-\frac{h(v,\mathbf{G}^{T})}{||v||_{h}}>\frac{1}{\tilde{\eta}-\eta }, & \text{%
if }\eta <\tilde{\eta}%
\end{array},
\right. \ \ \text{\ \ or \ \ } (\eta -\tilde{\eta})\frac{h(v,\mathbf{G}^{T})%
}{||v||_{h}}>1 ,\text{ }  \label{II.2}
\end{equation}%
for any $(\eta ,\tilde{\eta})\in \mathcal{\tilde{S}}\smallsetminus \mathcal{L%
}.$ Therefore, among the directions corresponding to $\theta \in \lbrack
0,2\pi )\smallsetminus \{\pi /2,3\pi /2\}$ only such directions for which $%
|\cos \theta |>\frac{1}{|\eta -\tilde{\eta}|\text{ }||\mathbf{G}^{T}||_{h}}$
can be followed in this case. By using \eqref{II.1}, a simple computation,
starting with $1=$ $||u||_{h}=||v-(\eta -\tilde{\eta})\mathbf{G}%
_{MAT}||_{h}, $ leads to the equation 
\begin{equation*}
||v||_{h}^{2}-2|\eta -\tilde{\eta}|\text{ }||v||_{h}||\mathbf{G}%
^{T}||_{h}\cos \theta -[1-(\eta -\tilde{\eta})^{2}||\mathbf{G}%
^{T}||_{h}^{2}\cos ^{2}\theta ]=0.
\end{equation*}%
Since $|\cos \theta |>\frac{1}{|\eta -\tilde{\eta}|\text{ }||\mathbf{G}%
^{T}||_{h}},$ for any$\ (\eta ,\tilde{\eta})\in \mathcal{\tilde{S}}%
\smallsetminus \mathcal{L}$ and $\theta \in \lbrack 0,2\pi )\smallsetminus
\{\pi /2,3\pi /2\},$ it follows that the last equation admits two positive
roots%
\begin{equation}
||v||_{h}=\pm 1+|\eta -\tilde{\eta}|\text{ }||\mathbf{G}^{T}||_{h}|\cos
\theta |.  \label{II.3}
\end{equation}%
Based on the property \eqref{II.1}, we can write \eqref{II.3} as $%
g_{3}(x,v)=0,$ where%
\begin{equation*}
g_{3}(x,v)=||v||_{h}^{2}\mp ||v||_{h}-(\eta -\tilde{\eta})h(v,\mathbf{G}%
^{T}).
\end{equation*}%
If we apply Okubo's method again (\cite{matsumoto}), we get the following
positive functions $F_{1,2}(x,v)=\frac{||v||_{h}^{2}}{\pm ||v||_{h}+(\eta -%
\tilde{\eta})h(v,\mathbf{G}^{T})}$ as the solutions of the equation $g_{3}(x,%
\frac{v}{F})=0.$ Next, we extend $F_{1,2}(x,v)$ to an arbitrary nonzero
vector $y\in \mathcal{A}_{x}^{\ast }=\mathcal{A}^{\ast }\mathcal{\ \cap \ }%
T_{x}M,$ for any $x\in M,$ where 
\begin{equation}
\mathcal{A}^{\ast }=\{(x,y)\in TM\text{ }|\text{ }\ ||y||_{h}-(\eta -\tilde{%
\eta})h(y,\mathbf{G}^{T})<0\}
\end{equation}%
is an open conic subset of $TM_{0},$ for any $(\eta ,\tilde{\eta})\in 
\mathcal{\tilde{S}}\smallsetminus \mathcal{L}.$ Thus, we obtain the positive
homogeneous $C^{\infty }$-functions%
\begin{equation}
F_{1,2}(x,y)=\frac{||y||_{h}^{2}}{\pm ||y||_{h}+(\eta -\tilde{\eta})h(y,%
\mathbf{G}^{T})}  \label{F12}
\end{equation}%
on $\mathcal{A}^{\ast }$, with $F_{1,2}(x,v)=1$. We notice that $\mathbf{G}%
^{T}\in \mathcal{A}_{x}^{\ast }$ iff $\eta >\tilde{\eta}$ and $-\mathbf{G}%
^{T}\in \mathcal{A}_{x}^{\ast }$ iff $\eta <\tilde{\eta}.$ Moreover, due to %
\eqref{II.2}, our initial assumption $|\eta -\tilde{\eta}|\text{ }||\mathbf{G}%
_{MAT}||_{h}>1$ is a necessary and sufficient condition for $F_{1,2}(x,y)$
to be positive on $\mathcal{A}^{\ast }.$

By the notation \eqref{NOT}, the functions $F_{1,2}(x,y)$ are of Matsumoto
type, namely 
\begin{equation}
F_{1,2}(x,y)=\frac{\alpha ^{2}}{\pm \alpha -(\eta -\tilde{\eta})\bar{g}\beta 
}  
\label{Fstrong}
\end{equation}%
on the conic domain $\mathcal{A}^{\ast }$, rewritten as $\mathcal{A}^{\ast
}=\{(x,y)\in TM$ $|$ $\alpha +(\eta -\tilde{\eta})\bar{g}\beta <0\}$.
However, $F_{1,2}$ can be at most conic Finsler metrics. By applying \cite[%
Corollary 4.15]{JS}, it turns out that both $F_{1,2}$ are strongly convex on 
$\mathcal{A}^{\ast }$ and thus, they are conic Finsler metrics on $\mathcal{A%
}^{\ast },$ for any $(\eta ,\tilde{\eta})\in \mathcal{\tilde{S}}%
\smallsetminus \mathcal{L}.$ Indeed, for $F_{1,2}$ the strong convexity
conditions $[\alpha \mp 2(\eta -\tilde{\eta})\bar{g}\beta ][\alpha \mp (\eta
-\tilde{\eta})\bar{g}\beta ]>0$ are satisfied for any $(x,y)\in \mathcal{A}%
^{\ast }$ and $(\eta ,\tilde{\eta})\in \mathcal{\tilde{S}}\smallsetminus 
\mathcal{L}.$

Summarizing the results from this step, beyond their intrinsic interest, it
turns out that the direction-dependent deformation of the background
Riemannian metric $h$ by the vector field $(\eta -\tilde{\eta})\mathbf{G}%
_{MAT}$, with $\left. |\eta -\tilde{\eta}|\text{ }||\mathbf{G}%
_{MAT}||_{h}<1\right.$ performed for each direction (note that the
converse inequality $\left. |\eta -\tilde{\eta}|\text{ }||\mathbf{G}%
_{MAT}||_{h}\geq 1\right. $ one may carry out only at some directions), for
any $(\eta ,\tilde{\eta})\in \mathcal{\tilde{S}}\smallsetminus \mathcal{L}$
provides the Finsler metric $F(x,y)=\frac{\alpha ^{2}}{\alpha -(\eta -\tilde{%
\eta})\bar{g}\beta }$ if and only if $||\mathbf{G}^{T}||_{h}<\frac{1}{2|\eta
-\tilde{\eta}|}$.

\paragraph{Step II.}

In attempting to use \cref{Prop3}, let us consider the navigation data $%
(F,(1-\eta )\mathbf{G}^{T})$ on the Finsler manifold $(M,F),$ where $F$ is
either the Finsler metric \eqref{Matsumoto_eta} if $(\eta ,\tilde{\eta})\in 
\mathcal{\tilde{S}}\smallsetminus \mathcal{L}$ or the background Riemannian
metric $h$ if $(\eta ,\tilde{\eta})\in \mathcal{L},$ assuming that 
\begin{equation}
F(x,-(1-\eta )\mathbf{G}^{T})<1.  \label{CC}
\end{equation}%
Exploring the Zermelo navigation on $(M,F)$ with the aforementioned
navigation data, we supply new Finsler
metrics, which we call the $(\eta ,\tilde{\eta})$-slope metrics, together with
the necessary and sufficient conditions for the strong convexity of their
indicatrices. More precisely, applying \cref{Prop3}, for each $(\eta ,\tilde{%
\eta})\in \mathcal{\tilde{S}}$, the $(\eta ,\tilde{\eta})$-slope metric has
to arise as the unique positive solution $\tilde{F}$ of the equation%
\begin{equation}
F(x,y-(1-\eta )\tilde{F}(x,y)\mathbf{G}^{T})=\tilde{F}(x,y),\text{ }
\label{II}
\end{equation}%
for any $(x,y)\in TM_{0}.$

Before doing this, a few details must be outlined. On one hand, the meaning of
our second step is that the addition of the scaled gravitational wind $%
(1-\eta )\mathbf{G}^{T}$ generates a rigid translation to the strongly convex
indicatrix provided by $v=u-(\eta -\tilde{\eta})\mathbf{G}_{MAT}$ in the
first step, for any $(\eta ,\tilde{\eta})\in \mathcal{\tilde{S}}%
\smallsetminus \mathcal{L}$ (see \cite{CJS}). We have already got rid of
the possibility that $|\eta -\tilde{\eta}|\ ||\mathbf{G}%
_{MAT}||_{h}\geq 1$ because if the first term $(\eta -\tilde{\eta})\mathbf{G}_{MAT}$
generates a non-strongly convex indicatrix, the second term $(1-\eta )\mathbf{G}^{T}$ (which is just a translation) cannot compensate that. On the other hand, the
condition \eqref{CC} plays an essential role for the resulting indicatrix,
obtained by the translation, to be strongly convex and to determine a new
Finsler metric as the unique solution of \eqref{II} (see \cite[p. 10
and Proposition 2.14]{CJS}). In other words, the condition \eqref{CC}
secures that for any $x\in M$, $y=0$ belongs to the region bounded by the
new (translated) indicatrix $I_{\tilde{F}}$. 

In the sequel, we expand the left-hand side of \eqref{II}. Let us
observe that the Finsler metric $F$\ can be written as $F(x,y)=\frac{\alpha
^{2}}{\alpha -(\eta -\tilde{\eta})\bar{g}\beta },$\ for any $(\eta
,\tilde{\eta})\in \mathcal{\tilde{S}}.$ In particular, if $\eta =1,$ it is
obvious that $\tilde{F}(x,y)=\frac{\alpha ^{2}}{\alpha -(1-\tilde{\eta})\bar{%
g}\beta},$ for any\textbf{\ }$\tilde{\eta}\in \lbrack 0,1]$, i.e. the metric in the reduced Matsumoto case (R-MAT). For arbitrary $\eta\in[0, 1]$, by taking $y-(1-\eta )\tilde{F}(x,y)%
\mathbf{G}^{T}$ instead of $y$ in \eqref{NOT}, some standard computations
yield 
\begin{equation*}
\alpha ^{2}\left( x,y-(1-\eta )\tilde{F}(x,y)\mathbf{G}^{T}\right) =\alpha
^{2}(x,y)+2(1-\eta )\bar{g}\beta (x,y)\tilde{F}(x,y)+(1-\eta )^{2}||\mathbf{G%
}^{T}||_{h}^{2}\tilde{F}^{2}(x,y)
\end{equation*}%
and%
\begin{equation*}
\beta \left( x,y-(1-\eta )\tilde{F}(x,y)\mathbf{G}^{T}\right) =\beta (x,y)+%
\frac{1-\eta}{\bar{g}}||\mathbf{G}^{T}||_{h}^{2}\tilde{F}(x,y),
\end{equation*}%
where we used the relation $\beta (x,\mathbf{G}^{T})=-\frac{1}{\bar{g}}||\mathbf{G}%
^{T}||_{h}^{2}$. Therefore, it turns out that the left-hand side of 
\eqref{II} is 
\begin{equation*}
\frac{\alpha ^{2}+2(1-\eta )\bar{g}\beta \tilde{F}+(1-\eta )^{2}||\mathbf{G}%
^{T}||_{h}^{2}\tilde{F}^{2}}{\sqrt{\alpha ^{2}+2(1-\eta )\bar{g}\beta \tilde{%
F}+(1-\eta )^{2}||\mathbf{G}^{T}||_{h}^{2}\tilde{F}^{2}}-(\eta -\tilde{\eta})%
\bar{g}\beta -(\eta -\tilde{\eta})(1-\eta )||\mathbf{G}^{T}||_{h}^{2}\tilde{F%
}},
\end{equation*}%
where $\alpha $, $\beta $ and $\tilde{F}$ are evaluated at $(x,y)$. Now, if
we substitute this into \eqref{II}, we get the irrational equation%
\begin{equation}
\tilde{F}\sqrt{\alpha ^{2}+2(1-\eta )\bar{g}\beta \tilde{F}+(1-\eta )^{2}||%
\mathbf{G}^{T}||_{h}^{2}\tilde{F}^{2}}=\alpha ^{2}+(2-\eta -\tilde{\eta})%
\bar{g}\beta \tilde{F}+(1-\eta )(1-\tilde{\eta})||\mathbf{G}^{T}||_{h}^{2}%
\tilde{F}^{2},  \label{MAMA_general}
\end{equation}%
which is equivalent to the following polynomial equation 
\begin{equation}
\begin{array}{c}
(1-\eta )^{2}||\mathbf{G}^{T}||_{h}^{2}[1-\left( 1-\tilde{\eta}\right) ^{2}||%
\mathbf{G}^{T}||_{h}^{2}]\tilde{F}^{4}+2(1-\eta )\left[ 1-\left( 2-\eta -%
\tilde{\eta}\right) \left( 1-\tilde{\eta}\right) ||\mathbf{G}^{T}||_{h}^{2}%
\right] \bar{g}\beta \tilde{F}^{3} \\ 
~ \\ 
+\{\left[ 1-2(1-\eta )\left( 1-\tilde{\eta}\right) ||\mathbf{G}^{T}||_{h}^{2}%
\right] \alpha ^{2}-\left( 2-\eta -\tilde{\eta}\right) ^{2}\bar{g}^{2}\beta
^{2}\}\tilde{F}^{2}-2\left( 2-\eta -\tilde{\eta}\right) \bar{g}\alpha
^{2}\beta \tilde{F}-\alpha ^{4}=0,%
\end{array}
\label{MAMA_4}
\end{equation}%
for any\textbf{\ }$(\eta ,\tilde{\eta})\in \mathcal{\tilde{S}}$.

In the special case where $\eta =\tilde{\eta}=1,$ we obviously
have $\tilde{F}=h.$ If $(1-\eta )^{2}[1-\left( 1-\tilde{\eta}\right) ^{2}||%
\mathbf{G}^{T}||_{h}^{2}]\neq 0$, the last equation admits four roots, and
thanks to the condition \eqref{CC}, we know precisely that among all roots
there is only one positive. For any $(\eta ,\tilde{\eta})\in \mathcal{%
\tilde{S}}$, it should be the $(\eta ,\tilde{\eta})$-slope metric. From now
on, we denote by $\tilde{F}_{\eta \tilde{\eta}}$ the $(\eta ,\tilde{\eta})$%
-slope metric, outlining that $\tilde{F}_{\eta \tilde{\eta}}$ satisfies \eqref{MAMA_general} and moreover, along any regular piecewise $C^{\infty }$%
-curve $\gamma ,$ parametrized by time that represents a trajectory in
Zermelo's problem, we have $\tilde{F}_{\eta \tilde{\eta}}(\gamma (t),\dot{%
\gamma}(t))=1.$ This is the time in which a walker, a craft or a vehicle goes along $%
\gamma $.

Now, it remains to provide explicitly the necessary and sufficient
conditions for the indicatrix of $\tilde{F}_{\eta \tilde{\eta}}$ to be
strongly convex, and thus we will outline the argument that the $\tilde{F}%
_{\eta \tilde{\eta}}$-geodesics locally minimize time. In order to handle
this issue, we need to characterize the inequality \eqref{CC} which is
equivalent to%
\begin{equation}
\frac{1-(1-\tilde{\eta})||\mathbf{G}^{T}||_{h}}{1-(\eta -\tilde{\eta})||%
\mathbf{G}^{T}||_{h}}>0.  \label{CC_SC}
\end{equation}%
Indeed, since $F(x,y)=\frac{\alpha ^{2}}{\alpha -(\eta -\tilde{\eta})\bar{g}%
\beta }=\frac{||y||_{h}^{2}}{||y||_{h}+(\eta -\tilde{\eta})h(y,\mathbf{G}%
^{T})},$\ it turns out that, for any\textbf{\ }$(\eta ,\tilde{\eta})\in 
\mathcal{\tilde{S}},$ the left-hand side of \eqref{CC} is%
\begin{equation*}
F(x,-(1-\eta )\mathbf{G}^{T})=\frac{||-(1-\eta )\mathbf{G}^{T}||_{h}^{2}}{%
||-(1-\eta )\mathbf{G}^{T}||_{h}+(\eta -\tilde{\eta})h(-(1-\eta )\mathbf{G}%
^{T},\mathbf{G}^{T})}=\frac{(1-\eta )||\mathbf{G}^{T}||_{h}}{1-(\eta -\tilde{%
\eta})||\mathbf{G}^{T}||_{h}}
\end{equation*}%
which together with \eqref{CC} conclude the claim \eqref{CC_SC}. Namely, we
prove the result

\begin{lemma}
\label{Lema3} The following statements are equivalent:

\begin{itemize}
\item[i)] for any $(\eta ,\tilde{\eta})\in \mathcal{\tilde{S}}$, the
indicatrix $I_{\tilde{F}_{\eta \tilde{\eta}}}$ of the $(\eta ,\tilde{\eta})$%
-slope metric $\tilde{F}_{\eta \tilde{\eta}}$ is strongly convex;

\item[ii)] the gravitational wind $\mathbf{G}^{T}$ is restricted to either $%
||\mathbf{G}^{T}||_{h}<\frac{1}{1-\tilde{\eta}}$ and $(\eta ,\tilde{\eta}%
)\in \mathcal{D}_{1}\cup \mathcal{D}_{2}$, or $||\mathbf{G}^{T}||_{h}<\frac{1%
}{2|\eta -\tilde{\eta}|}$ and $(\eta ,\tilde{\eta})\in \mathcal{D}_{3}\cup 
\mathcal{D}_{4}$, where%
\begin{equation}
\begin{array}{l}
\mathcal{D}_{1}=\left\{ (\eta ,\tilde{\eta})\in \mathcal{S}\text{ }|\text{ }%
\eta \geq \tilde{\eta}>2\eta -1\right\} ,\qquad \ \ \mathcal{D}_{2}=\left\{
(\eta ,\tilde{\eta})\in \mathcal{S}\text{ }|\text{ }\frac{3\tilde{\eta}-1}{2}%
<\eta <\tilde{\eta}\right\} , \\ 
\\ 
\mathcal{D}_{3}=\left\{ (\eta ,\tilde{\eta})\in \mathcal{S}\text{ }|\text{ }%
\eta \geq \frac{1}{2},\text{ }\tilde{\eta}\leq 2\eta -1\right\} ,\quad \ 
\mathcal{D}_{4}=\left\{ (\eta ,\tilde{\eta})\in \mathcal{S}\text{ }|\text{ }%
\tilde{\eta}\geq \frac{1}{3},\text{ }\eta \leq \frac{3\tilde{\eta}-1}{2}%
\right\} ,%
\end{array}
\label{DDD}
\end{equation}

$\mathcal{S}=\bigcup\limits_{i=1}^{4}\mathcal{D}_{i}$ and $\mathcal{D}%
_{i}\cap \mathcal{D}_{j}=\varnothing ,$ for any $i\neq j,$ $i,j=1,...,4.$ No
restriction should  be imposed on $||\mathbf{G}^{T}||_{h}$ if $\eta =\tilde{%
\eta}=1.$

\item[iii)] the active wind $\mathbf{G}_{\eta \tilde{\eta}}$ given by %
\eqref{wind_general} is restricted to either $||\mathbf{G}_{\eta \tilde{\eta}%
}||_{h}<\frac{1}{1-\tilde{\eta}}$ and $(\eta ,\tilde{\eta})\in \mathcal{D}%
_{1}\cup \mathcal{D}_{2}$, or $||\mathbf{G}_{\eta \tilde{\eta}}||_{h}<\frac{1%
}{2|\eta -\tilde{\eta}|}$ and $(\eta ,\tilde{\eta})\in \mathcal{D}_{3}\cup 
\mathcal{D}_{4}.$
\end{itemize}
\end{lemma}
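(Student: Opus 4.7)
My proof would rest on the two observations already built into the setup of \textbf{Step II}: by \cref{Prop3}, the indicatrix $I_{\tilde{F}_{\eta \tilde{\eta}}}$ is the rigid translation by $(1-\eta)\mathbf{G}^{T}$ of the indicatrix $I_{F}$ of the background metric $F=\alpha^{2}/(\alpha-(\eta-\tilde{\eta})\bar{g}\beta)$ produced in \textbf{Step I}; and strong convexity of $I_{F}$ is transferred to $I_{\tilde{F}_{\eta \tilde{\eta}}}$ under this translation precisely when the navigability condition \eqref{CC} holds, which has already been reduced to \eqref{CC_SC}. Thus the first strategic move is to observe that $I_{\tilde{F}_{\eta \tilde{\eta}}}$ is strongly convex iff \emph{both} (a) $I_{F}$ is strongly convex and (b) \eqref{CC_SC} is satisfied.

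To establish (i)$\Leftrightarrow$(ii), I would split according to whether $(\eta,\tilde{\eta})$ lies on the diagonal $\mathcal{L}$ or not. Along $\mathcal{L}$, $F=h$ is trivially strongly convex and (b) collapses to $(1-\tilde{\eta})||\mathbf{G}^{T}||_{h}<1$, which recovers the bound in (ii) since the diagonal lies in $\mathcal{D}_{1}\cup\mathcal{D}_{2}$ (excluding $(1,1)$). Off $\mathcal{L}$, \cref{Lema2} sharpens (a) into $||\mathbf{G}^{T}||_{h}<\tfrac{1}{2|\eta-\tilde{\eta}|}$, while sign analysis of \eqref{CC_SC} delivers two further inequalities, $(1-\tilde{\eta})||\mathbf{G}^{T}||_{h}<1$ and $(\eta-\tilde{\eta})||\mathbf{G}^{T}||_{h}<1$ (the ``both negative'' branch of \eqref{CC_SC} being incompatible with (a), since it would require $||\mathbf{G}^{T}||_{h}>\tfrac{1}{1-\tilde{\eta}}\geq\tfrac{1}{2|\eta-\tilde{\eta}|}$ in the relevant regimes). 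Of these three competing upper bounds, $\tfrac{1}{|\eta-\tilde{\eta}|}$ is never binding as it exceeds $\tfrac{1}{2|\eta-\tilde{\eta}|}$, so everything reduces to comparing $\tfrac{1}{1-\tilde{\eta}}$ with $\tfrac{1}{2|\eta-\tilde{\eta}|}$. A direct algebraic manipulation shows that $\tfrac{1}{1-\tilde{\eta}}\leq\tfrac{1}{2|\eta-\tilde{\eta}|}$ is equivalent to $\tilde{\eta}\geq 2\eta-1$ when $\eta\geq\tilde{\eta}$ (whence the Zermelo bound is binding and we are in $\mathcal{D}_{1}$), and to $\eta\geq\tfrac{3\tilde{\eta}-1}{2}$ when $\eta<\tilde{\eta}$ (giving $\mathcal{D}_{2}$); the reverse inequalities yield $\mathcal{D}_{3}$ and $\mathcal{D}_{4}$, in which the Matsumoto-type bound from \textbf{Step I} prevails. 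A final inspection of the cut-offs confirms that $\mathcal{S}=\bigsqcup_{i=1}^{4}\mathcal{D}_{i}$.

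For (ii)$\Leftrightarrow$(iii), I would exploit the orthogonal decomposition of the active wind from \eqref{bbbb}, namely $\mathbf{G}_{\eta\tilde{\eta}}=(1-\eta)\mathbf{G}_{MAT}^{\perp}+(1-\tilde{\eta})\mathbf{G}_{MAT}$, which together with $||\mathbf{G}_{MAT}||_{h}^{2}+||\mathbf{G}_{MAT}^{\perp}||_{h}^{2}=||\mathbf{G}^{T}||_{h}^{2}$ yields
\begin{equation*}
||\mathbf{G}_{\eta\tilde{\eta}}||_{h}^{2}=(1-\eta)^{2}||\mathbf{G}^{T}||_{h}^{2}+[(1-\tilde{\eta})^{2}-(1-\eta)^{2}]\,||\mathbf{G}_{MAT}||_{h}^{2},
\end{equation*}
so the maximum of $||\mathbf{G}_{\eta\tilde{\eta}}||_{h}$ over directions of $u$ equals $\max(1-\eta,1-\tilde{\eta})||\mathbf{G}^{T}||_{h}$, attained at $u$ parallel or perpendicular to $\mathbf{G}^{T}$ depending on the sign of $\eta-\tilde{\eta}$. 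Plugging this worst-case identity into each region $\mathcal{D}_{i}$ converts the bound on $||\mathbf{G}^{T}||_{h}$ in (ii) into the corresponding bound on $||\mathbf{G}_{\eta\tilde{\eta}}||_{h}$ in (iii), and the implication is reversible.

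The main obstacle I anticipate is the bookkeeping in (i)$\Leftrightarrow$(ii): reconciling the three competing constraints, ruling out the sign branch of \eqref{CC_SC} that is incompatible with (a), and verifying cleanly that the piecewise-linear cut-offs $\tilde{\eta}=2\eta-1$ and $\eta=\tfrac{3\tilde{\eta}-1}{2}$ produce exactly the disjoint partition $\mathcal{D}_{1},\mathcal{D}_{2},\mathcal{D}_{3},\mathcal{D}_{4}$ of $\mathcal{S}$. The boundary edges $\eta=1$ and $\tilde{\eta}=1$ (excluding the vertex $(1,1)$) should be handled individually to confirm that the unified description remains valid where one of the scaling factors degenerates.
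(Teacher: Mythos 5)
Your route for i)$\Leftrightarrow$ii) is essentially the paper's: reduce strong convexity of $I_{\tilde{F}_{\eta\tilde{\eta}}}$ to the Step I bound $||\mathbf{G}^{T}||_{h}<\frac{1}{2|\eta-\tilde{\eta}|}$ (\cref{Lema2}) together with the navigability inequality \eqref{CC} in its rational form \eqref{CC_SC}, then decide region by region which of the two bounds binds; your cut-off computations ($\tilde{\eta}\gtrless 2\eta-1$ when $\eta\geq\tilde{\eta}$, $\eta\gtrless\frac{3\tilde{\eta}-1}{2}$ when $\eta<\tilde{\eta}$), the treatment of the diagonal $\mathcal{L}$, and the separate handling of the edges $\eta=1$, $\tilde{\eta}=1$ all match the paper's case analysis. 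One slip to repair: your stated reason for discarding the ``both negative'' branch of \eqref{CC_SC} — that it would force $||\mathbf{G}^{T}||_{h}>\frac{1}{1-\tilde{\eta}}\geq\frac{1}{2|\eta-\tilde{\eta}|}$ — is false off $\mathcal{D}_{3}$ (for $\eta=0.6$, $\tilde{\eta}=0.5$ one has $\frac{1}{1-\tilde{\eta}}=2<5=\frac{1}{2(\eta-\tilde{\eta})}$). The correct reason comes from the denominator: when $\eta>\tilde{\eta}$ a negative denominator requires $||\mathbf{G}^{T}||_{h}>\frac{1}{\eta-\tilde{\eta}}>\frac{1}{2(\eta-\tilde{\eta})}$, contradicting Step I, and when $\eta<\tilde{\eta}$ the denominator is automatically positive, so the branch is empty. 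You already have this ingredient (your remark that $\frac{1}{|\eta-\tilde{\eta}|}$ exceeds $\frac{1}{2|\eta-\tilde{\eta}|}$), so the fix is immediate.

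The genuine gap is in ii)$\Leftrightarrow$iii). Your identity $\max_{u}||\mathbf{G}_{\eta\tilde{\eta}}||_{h}=\max(1-\eta,1-\tilde{\eta})\,||\mathbf{G}^{T}||_{h}$ is correct and yields ii)$\Rightarrow$iii) (the factor is $\leq 1$), but it does not make the implication ``reversible'' for a fixed pair with $\min(\eta,\tilde{\eta})>0$: then the factor is strictly below $1$, so the bound on the active wind is strictly weaker than the same bound on $\mathbf{G}^{T}$ — e.g. for $\eta=\tilde{\eta}=\frac{1}{2}$ one has $||\mathbf{G}_{\eta\tilde{\eta}}||_{h}=\frac{1}{2}||\mathbf{G}^{T}||_{h}$ in every direction, and $||\mathbf{G}_{\eta\tilde{\eta}}||_{h}<2$ only gives $||\mathbf{G}^{T}||_{h}<4$, not $||\mathbf{G}^{T}||_{h}<2$. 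The paper's converse argument is of a different nature: besides $||\mathbf{G}_{\eta\tilde{\eta}}||_{h}\leq||\mathbf{G}^{T}||_{h}$, it invokes that the maximum of $||\mathbf{G}_{\eta\tilde{\eta}}||_{h}$ actually coincides with $||\mathbf{G}^{T}||_{h}$ for $\eta=0$ (attainable in both $\mathcal{D}_{1}\cup\mathcal{D}_{2}$ and $\mathcal{D}_{3}\cup\mathcal{D}_{4}$, in the directions where $\mathbf{G}_{MAT}$ vanishes), i.e. it reads iii) over the region families rather than pointwise in $(\eta,\tilde{\eta})$. To close your argument you must either adopt and state that reading explicitly, or replace the claim that the worst-case identity reverses the implication, since as written that step fails.
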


\begin{proof}
To prove the equivalence i) $\Leftrightarrow $ ii) one has to take into
account \eqref{CC_SC}, for any $(\eta ,\tilde{\eta})\in \mathcal{\tilde{S}}$%
. \ Because of this, the following cases must be analyzed separately:
\medskip

\noindent a) if $\eta >\tilde{\eta}$ and $\eta \neq 1$, then  inequality %
\eqref{CC_SC} yields either $||\mathbf{G}^{T}||_{h}<\frac{1}{1-\tilde{\eta}}$
or $||\mathbf{G}^{T}||_{h}>\frac{1}{\eta -\tilde{\eta}}.$ If we combine
these with the strong convexity condition for the indicatrix $I_{F},$ i.e. $%
||\mathbf{G}^{T}||_{h}<\frac{1}{2(\eta -\tilde{\eta})}$ for $1>\eta >\tilde{%
\eta}\geq 0$, we obtain

\begin{itemize}
\item $||\mathbf{G}^{T}||_{h}<\frac{1}{1-\tilde{\eta}}$ if either $(\eta ,%
\tilde{\eta})\in \mathcal{R}_{1}$ or $(\eta ,\tilde{\eta})\in \mathcal{R}%
_{3},$ where $\mathcal{R}_{1}=\left\{ (\eta ,\tilde{\eta})\in \mathcal{S}%
\text{ }|\text{ }0\leq \tilde{\eta}<\eta <\frac{1}{2}\right\} $ and $%
\mathcal{R}_{3}=\left\{ (\eta ,\tilde{\eta})\in \mathcal{S}\text{ }|\text{ }%
\eta \in \left[ \frac{1}{2},1\right) ,\text{ }\tilde{\eta}\in (2\eta -1,\eta
)\right\}$; for clarity's sake, see the partition depicted in \cref{partition}. It is obvious that $\mathcal{R}_{1}$ and $\mathcal{R}_{3}$ are
subsets of $\mathcal{D}_{1}$ and $\mathcal{D}_{1}=\mathcal{R}_{1}\cup 
\mathcal{R}_{3}\cup \mathcal{L}_{0},$ where $\mathcal{L}_{0}$ denotes $%
\mathcal{L}\smallsetminus \{(1,1)\}$. Thus, $||\mathbf{G}^{T}||_{h}<\frac{1}{%
1-\tilde{\eta}}$ if $(\eta ,\tilde{\eta})\in \mathcal{R}_{1}\cup \mathcal{R}%
_{3}=\mathcal{D}_{1}\smallsetminus \mathcal{L}_{0}$.

\item $||\mathbf{G}^{T}||_{h}<\frac{1}{2(\eta -\tilde{\eta})}$ if $1>\eta
\geq \frac{1}{2}$ and $2\eta -1\geq \tilde{\eta}\geq 0$. Hence, we have $||%
\mathbf{G}^{T}||_{h}<\frac{1}{2(\eta -\tilde{\eta})}$ \ if $(\eta ,\tilde{%
\eta})\in \mathcal{D}_{3}\smallsetminus \mathcal{L}_{1}$, where $\mathcal{L}%
_{1}=\left\{ (\eta ,\tilde{\eta})\in \mathcal{S}\text{ }|\text{ }\eta
=1\right\} .$
\end{itemize}

\noindent b) if $\eta =\tilde{\eta}$ and $\eta \neq 1,$ then $F=h$ and the resultant
metric is a Randers one in this case. Moreover, for every $(\eta ,\tilde{\eta%
})\in \mathcal{L}_{0}$, inequality \eqref{CC_SC} is equivalent to $||%
\mathbf{G}^{T}||_{h}<\frac{1}{1-\eta }$.

\noindent c) if $\eta <\tilde{\eta}$ and $\tilde{\eta}\neq 1$, then \eqref{CC_SC}
implies that $||\mathbf{G}^{T}||_{h}<\frac{1}{1-\tilde{\eta}}.$ Combining
this with the strong convexity condition for the indicatrix $I_{F},$ i.e. $||%
\mathbf{G}^{T}||_{h}<\frac{1}{2(\tilde{\eta}-\eta )}$ for $0\leq \eta <%
\tilde{\eta}<1$, we get

\begin{itemize}
\item $||\mathbf{G}^{T}||_{h}<\frac{1}{1-\tilde{\eta}}$ if either $(\eta ,%
\tilde{\eta})\in \mathcal{R}_{2}$ or $(\eta ,\tilde{\eta})\in \mathcal{R}%
_{4},$ where $\mathcal{R}_{2}=\left\{ (\eta ,\tilde{\eta})\in \mathcal{S}%
\text{ }|\text{ }0\leq \eta <\tilde{\eta}<\frac{1}{3}\right\} $ and $%
\mathcal{R}_{4}=\left\{ (\eta ,\tilde{\eta})\in \mathcal{S}\text{ }|\text{ }%
\tilde{\eta}\in \left[ \frac{1}{3},1\right) ,\text{ }\eta \in \left( \frac{3%
\tilde{\eta}-1}{2},\tilde{\eta}\right) \right\}$. It is clear that $%
\mathcal{R}_{2}$ and $\mathcal{R}_{4}$ are subsets of $\mathcal{D}_{2}$ and $%
\mathcal{D}_{2}=\mathcal{R}_{2}\cup \mathcal{R}_{4}$ (see \cref{partition}). Thus, $||\mathbf{G}%
^{T}||_{h}<\frac{1}{1-\tilde{\eta}}$ if $(\eta ,\tilde{\eta})\in \mathcal{D}%
_{2}.$

\item $||\mathbf{G}^{T}||_{h}<\frac{1}{2(\tilde{\eta}-\eta )}$ if $\frac{1}{3%
}\leq \tilde{\eta}<1$ and $0\leq \eta \leq \frac{3\tilde{\eta}-1}{2}$. It follows that $||\mathbf{G}^{T}||_{h}<\frac{1}{2(\eta -\tilde{\eta})}$ if $(\eta ,%
\tilde{\eta})\in \mathcal{D}_{4}\smallsetminus \mathcal{L}_{2}$, where $%
\mathcal{L}_{2}=\left\{ (\eta ,\tilde{\eta})\in \mathcal{S}\text{ }|\text{ }%
\tilde{\eta}=1\right\}.$
\end{itemize}

\noindent d) if $\eta \neq \tilde{\eta}$ and $\eta =1,$ then $1>\tilde{\eta}$ and %
\eqref{CC_SC} is fulfilled. Accordingly, in this case, we have $(1,\tilde{\eta})$%
-slope metric which is the Matsumoto type metric $F$ (\eqref{Matsumoto_eta} with $\eta =1),$ and the strong convexity of its
indicatrix yields $||\mathbf{G}^{T}||_{h}<\frac{1}{2(1-\tilde{\eta})}.$
Consequently, $||\mathbf{G}^{T}||_{h}<\frac{1}{2(1-\tilde{\eta})}$ if 
$(\eta ,\tilde{\eta})\in \mathcal{L}_{1}.$

\noindent e) if $\eta \neq \tilde{\eta}$ and $\tilde{\eta}=1,$ then $\eta <1$ and inequality \eqref{CC_SC} holds. It turns out that the strong convexity
corresponding to the metric $F$ given by \eqref{Matsumoto_eta}, certified by %
\cref{Lema2}, ensures the strong convexity which corresponds to the $(\eta
,1)$-slope metric, namely $||\mathbf{G}^{T}||_{h}<\frac{1}{2(1-\eta )}$. So, we have
  $||\mathbf{G}^{T}||_{h}<\frac{1}{2(1-\eta )}$ if $(\eta ,\tilde{%
\eta})\in \mathcal{L}_{2}$.

\noindent f) if $\eta =\tilde{\eta}=1,$ then there is not any deformation for $h$, 
since $\mathbf{G}_{\eta \tilde{\eta}}=0$ and thus, there is no restriction on $||%
\mathbf{G}^{T}||_{h}.$

Summing up the above findings, we obtain that inequality \eqref{CC} is
equivalent to either $||\mathbf{G}^{T}||_{h}<\frac{1}{1-\tilde{\eta}}$ and $%
(\eta ,\tilde{\eta})\in \mathcal{D}_{1}\cup \mathcal{D}_{2}$, or $||\mathbf{G%
}^{T}||_{h}<\frac{1}{2|\eta -\tilde{\eta}|}$ and $(\eta ,\tilde{\eta})\in 
\mathcal{D}_{3}\cup \mathcal{D}_{4}$.

The argument which proves the equivalence ii) $\Leftrightarrow $ iii)
is that $||\mathbf{G}_{\eta \tilde{\eta}}||_{h}\leq ||\mathbf{G}^{T}||_{h},$
for any $(\eta ,\tilde{\eta})\in \mathcal{S}$ and, furthermore, the maximum
of $||\mathbf{G}_{\eta \tilde{\eta}}||_{h}$ coincides with $||\mathbf{G}^{T}||_{h}$ for $\eta=0$ (which is possible both in $\mathcal{D}_{1}\cup \mathcal{D}_{2}$ and in $\mathcal{D}_{3}\cup \mathcal{D}_{4}$), since $\mathbf{G}_{MAT}$ must vanish for some directions. 
\end{proof}
Based on the results stated in Steps I and II, we have performed the proof
of \cref{Theorem1}.

\bigskip We remark that, according to \cref{Lema3}, the force of the active
wind $\mathbf{G}_{\eta \tilde{\eta}}$ can be accounted for in terms of 
the force of the gravitational wind $\mathbf{G}^{T}$, i.e. $||\mathbf{G}%
^{T}||_{h}<\tilde{b}_{0}$, in the problem $\mathcal{P}_{\eta ,\tilde{\eta}}$
(\cref{partition}), for any $(\eta ,\tilde{\eta})\in 
\mathcal{S}$, where 
\begin{equation}
\ \tilde{b}_{0}=\left\{ 
\begin{array}{cc}
\frac{1}{1-\tilde{\eta}}, & \text{if }(\eta ,\tilde{\eta})\in \mathcal{D}%
_{1}\cup \mathcal{D}_{2} \\ 
\frac{1}{2|\eta -\tilde{\eta}|}, & \text{if }(\eta ,\tilde{\eta})\in 
\mathcal{D}_{3}\cup \mathcal{D}_{4}%
\end{array}%
.\right.  \label{Strong_C}
\end{equation}

It is worthwhile to mention a few observations regarding the range of $%
\tilde{b}_{0}$. For example, when $(\eta ,\tilde{\eta})\in \mathcal{R}_{1}$,
it follows that $\tilde{b}_{0}\in \lbrack 1,2)$ or when $(\eta ,\tilde{\eta}%
)\in \mathcal{R}_{2}$, we obtain $\tilde{b}_{0}\in (1,\frac{3}{2})$.
Moreover, for $(\eta ,\tilde{\eta})\in \mathcal{R}_{3}\cup \mathcal{R}_{4}$, 
$\tilde{b}_{0}\longrightarrow \infty $ as $\tilde{\eta}\nearrow 1.$
Similarly, for $(\eta ,\tilde{\eta})\in \mathcal{D}_{3}\cup \mathcal{D}_{4},$
$\tilde{b}_{0}\longrightarrow \infty $ as $|\eta -\tilde{\eta}%
|\longrightarrow 0$. In fact, once we are closer and closer to the point $%
(0,0)\in \mathcal{\tilde{S}}$, the admitted force of the gravitational wind is weaker because $\tilde{b}_{0}\longrightarrow 1$ as $\tilde{\eta}\searrow 0.$ On the other hand, the closer we approach to the point $(1,1)\in \mathcal{\tilde{S}}$, the stronger the allowed force of $\mathbf{G}^{T}$ becomes. 
 However, there is $(\eta ,%
\tilde{\eta})\in \mathcal{S}$ such that $||\mathbf{G}^{T}||_{h}>1$ and the
indicatrix $I_{\tilde{F}_{\eta \tilde{\eta}}}$ of the $(\eta ,\tilde{\eta})$%
-slope metric $\tilde{F}_{\eta \tilde{\eta}}$ is still strongly convex,
unlike the classical navigation problems, i.e. ZNP, where $||\mathbf{G}%
^{T}||_{h}<1$, or MAT, where $||\mathbf{G}^{T}||_{h}<\frac{1}{2}$.

The allowable gravitational wind force $||\mathbf{G}^{T}||_{h}<\tilde{b}_{0}$ for the general slippery slope model determined by the strong convexity conditions, including the influence of both traction coefficients, is illustrated in \cref{partition}. Moreover,  for comparison to the scenarios including only one varying parameter as in the preceding studies of SLIPPERY (\cite{slippery}) and S-CROSS (\cite{slipperyx}), we rediscover the previous findings as the particular cases in the current investigation and juxtaposed  them with the missing scenarios, i.e. R-MAT and R-CROSS so that all four boundary cases of $\mathcal{\tilde{S}}$ can now be pieced together. This is shown in \cref{VVV}.

\begin{figure}[h]
\centering
\includegraphics[width=0.355\textwidth]{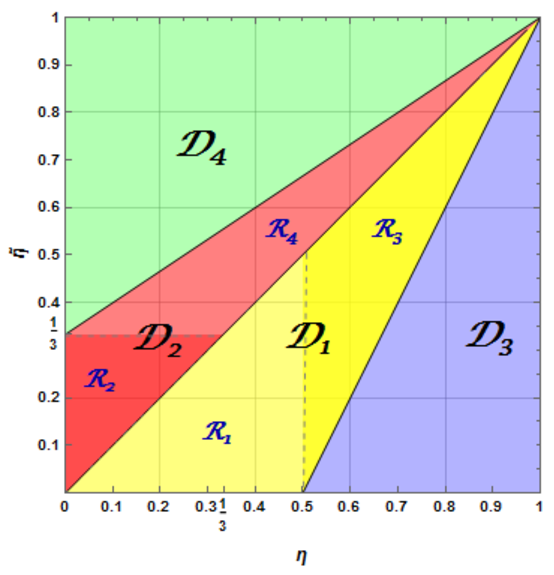}
\ \includegraphics[width=0.63\textwidth]{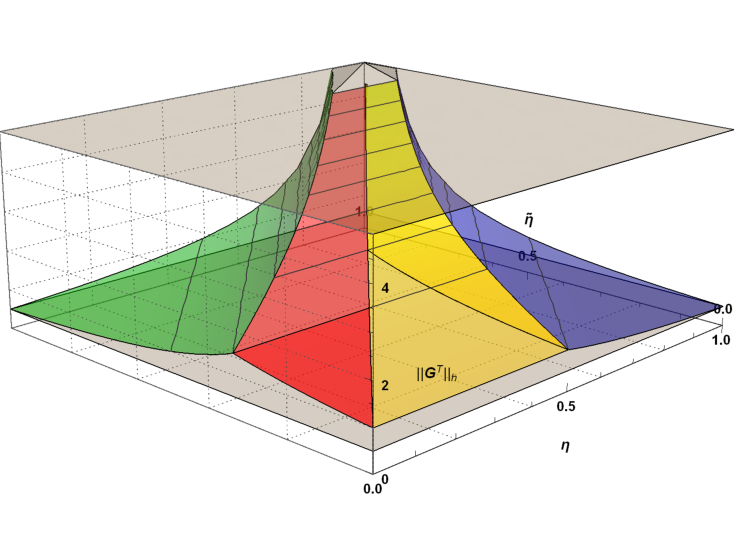} 
\caption{Left: The partition of the problem square diagram $\mathcal{\tilde{S}}=\mathcal{S}\cup \{(1,1)\}$ as in \eqref{DDD}, where $\mathcal{D}_{1}=\mathcal{R}_{1}\cup \mathcal{R}_{3}\cup \mathcal{L}_{0},$ $\mathcal{D}_{2}=\mathcal{R}_{2}\cup \mathcal{R}_{4}$, where $\mathcal{L}_{0}=\mathcal{L}\smallsetminus \{(1,1)\}$, and  $\mathcal{L}=\left\{ (\eta ,\tilde{\eta}
)\in \mathcal{\tilde{S}}\text{ }|\text{ }\eta =\tilde{\eta}\right\}$. 
Right: The allowable gravitational wind force $||\mathbf{G}^{T}||_h$ in the general slippery slope model determined by the strong convexity conditions given by \eqref{Strong_C}, including the influence of both traction coefficients ($\eta$, $\tilde{\eta}$), i.e. $||\mathbf{G}^{T}||_{h}<\tilde{b}_{0}$.  
For clarity of presentation, we limited the range $||\mathbf{G}^{T}||_h<5$ (the upper gray plane), remarking that $||\mathbf{G}^{T}||_h\longrightarrow\infty$ in the neighbourhood of the Riemannian corner, i.e. when $(\eta ,\tilde{\eta})\longrightarrow (1, 1)$. The lower plane (gray) represents $||\mathbf{G}^{T}||_h=0.5$ which refers to MAT, i.e. $(1, 0)$ as well as CROSS, i.e. $(0, 1)$. The colors of the related parts in both subfigures correspond to each other.} 
\label{partition}
\end{figure}
\begin{figure}[h!]
\centering
~\includegraphics[width=0.55\textwidth]{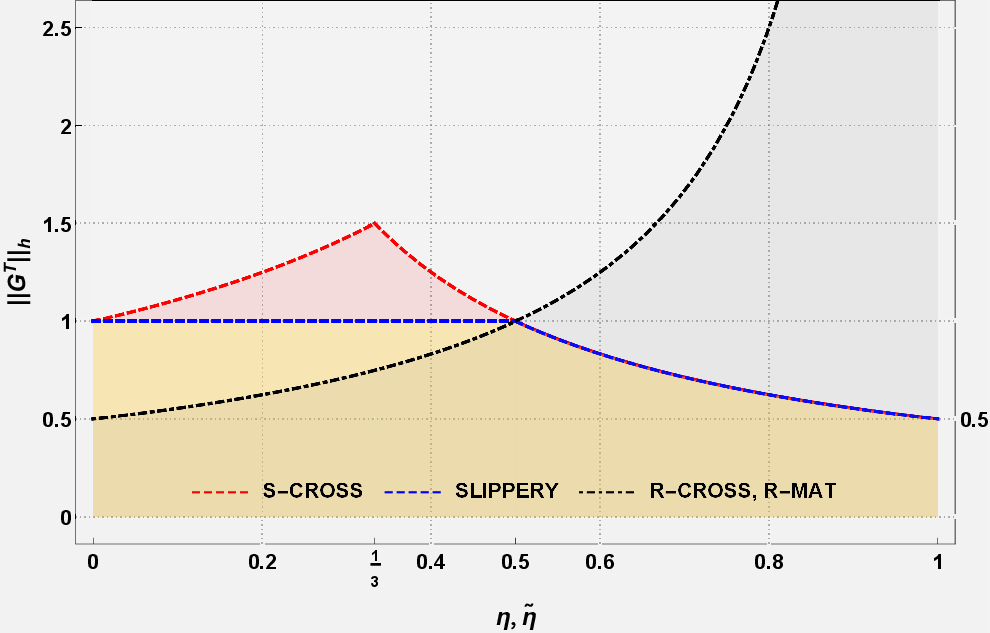}
\caption{A comparison of the orthographically projected images of the surface $||\mathbf{G}^{T}||_{h}<\tilde{b}_{0}(\eta ,\tilde{\eta})$ as in \cref{partition} (right) on the vertical planes which illustrate the least upper bounds $\tilde{b}_{0}(\eta ,\tilde{\eta})$ of the gravitational wind force $||\mathbf{G}^{T}||_h$ due to the strong convexity conditions as in \eqref{Strong_C} for four boundary cases of the problem square diagram $\mathcal{\tilde{S}}$, i.e. SLIPPERY ($\tilde{\eta}=0, \eta\in[0,1]$), S-CROSS ($\eta=0, \tilde{\eta}\in[0,1]$), R-MAT ($\eta=1,  \tilde{\eta}\in[0,1]$) and R-CROSS $(\tilde{\eta}=1,\eta\in[0,1]$). The outcomes related to R-MAT and R-CROSS overlap. For clarity of presentation, the upper range of $||\mathbf{G}^{T}||_h$ has been limited approximately to $2.5$, however $||\mathbf{G}^{T}||_h\longrightarrow\infty$, when $(\eta ,\tilde{\eta})\longrightarrow (1, 1)$.}
\label{VVV}
\end{figure}

Finally, we briefly discuss a kind of classification of the navigation
problems $\mathcal{P}_{\eta ,\tilde{\eta}}$, for any $(\eta ,\tilde{\eta})\in \mathcal{S}$. Taking into account the
decompositions of the active wind $\mathbf{G}_{\eta \tilde{\eta}},$ we can
give the following classification.

\begin{corollary}
Let $\mathcal{P}_{\eta ,\tilde{\eta}}$ be a navigation problem  on a slippery slope of a mountain $(M,h)$ under the action of an active wind $\mathbf{G}_{\eta \tilde{\eta}}$ given in \eqref{wind_general}, with a
cross-traction coefficient $\eta \in \lbrack 0,1]$, an along-traction
coefficient $\tilde{\eta}\in \lbrack 0,1]$ and a gravitational wind $\mathbf{%
G}^{T}$ on $M$. The following statements hold:

\begin{itemize}
\item[i)] For any $(\eta ,\tilde{\eta})\in \mathcal{S}$ with $\eta >\tilde{%
\eta}$, $\mathcal{P}_{\eta ,\tilde{\eta}}$ comes from SLIPPERY
 with a certain form for the cross-traction coefficient,
namely, $c_{1}=\frac{\eta -\tilde{\eta}}{1-\tilde{\eta}}\in (0,1];$

\item[ii)] For any $(\eta ,\tilde{\eta})\in \mathcal{S}$ with $\eta <\tilde{%
\eta}$, $\mathcal{P}_{\eta ,\tilde{\eta}}$ comes from S-CROSS
 with a certain form for the along-traction coefficient,
namely, $c_{2}=\frac{\tilde{\eta}-\eta }{1-\eta }\in (0,1].$
\end{itemize}
\end{corollary}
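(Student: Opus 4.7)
The plan is to reduce the general active wind $\mathbf{G}_{\eta\tilde{\eta}}=(\eta-\tilde{\eta})\mathbf{G}_{MAT}+(1-\eta)\mathbf{G}^T$ given by \eqref{wind_general} to the form of either the SLIPPERY or the S-CROSS active wind, acting on a rescaled (weaker) gravitational field. The two decompositions to match are: the SLIPPERY form $\mathbf{G}_\eta=\eta\mathbf{G}_{MAT}+(1-\eta)\mathbf{G}^T$ from \eqref{eqs_slippery} and the S-CROSS form $\mathbf{G}_{\tilde{\eta}}=-\tilde{\eta}\mathbf{G}_{MAT}+\mathbf{G}^T$ from \eqref{eqs_slipperyx}. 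The key observation is that the Matsumoto projection is linear in the wind, so rescaling $\mathbf{G}^T$ by a positive factor simply rescales $\mathbf{G}_{MAT}$ by the same factor.

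For (i), assume $\eta>\tilde{\eta}$ and consider a SLIPPERY problem on $(M,h)$ with rescaled gravitational wind $\mathbf{G}^{T}_{\star}:=(1-\tilde{\eta})\mathbf{G}^T$ and cross-traction coefficient $c_1\in[0,1]$ to be determined. Since the component of $\mathbf{G}^{T}_{\star}$ along $u$ equals $(1-\tilde{\eta})\mathbf{G}_{MAT}$, the corresponding SLIPPERY active wind reads
\begin{equation*}
c_1(1-\tilde{\eta})\mathbf{G}_{MAT}+(1-c_1)(1-\tilde{\eta})\mathbf{G}^T.
\end{equation*}
Matching coefficients with $\mathbf{G}_{\eta\tilde{\eta}}$ forces $c_1(1-\tilde{\eta})=\eta-\tilde{\eta}$ and $(1-c_1)(1-\tilde{\eta})=1-\eta$; both equations are satisfied by the single choice $c_1=\frac{\eta-\tilde{\eta}}{1-\tilde{\eta}}$, which is well-defined because $(\eta,\tilde{\eta})\in\mathcal{S}$ together with $\eta>\tilde{\eta}$ yields $\tilde{\eta}<1$, and thus $c_1\in(0,1]$ (equal to $1$ iff $\eta=1$).

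For (ii), the argument is symmetric: when $\eta<\tilde{\eta}$, take an S-CROSS problem on $(M,h)$ with rescaled wind $\mathbf{G}^{T}_{\star}:=(1-\eta)\mathbf{G}^T$ and along-traction coefficient $c_2$. The S-CROSS active wind becomes
\begin{equation*}
-c_2(1-\eta)\mathbf{G}_{MAT}+(1-\eta)\mathbf{G}^T,
\end{equation*}
and matching with $\mathbf{G}_{\eta\tilde{\eta}}$ gives $-c_2(1-\eta)=\eta-\tilde{\eta}$, i.e. $c_2=\frac{\tilde{\eta}-\eta}{1-\eta}$. Since $\eta<\tilde{\eta}\leq 1$, we have $\eta<1$, so $c_2$ is well-defined and lies in $(0,1]$.

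The argument is essentially a bookkeeping verification once the rescaling $\mathbf{G}^T\mapsto(1-\tilde{\eta})\mathbf{G}^T$ (resp.\ $(1-\eta)\mathbf{G}^T$) is guessed, so no genuine obstacle arises; the only subtlety worth pointing out in the write-up is that this scaling respects the Matsumoto projection (hence produces a \emph{bona fide} SLIPPERY or S-CROSS instance with a weaker gravitational field), and that it is compatible with the strong convexity conditions of \cref{Theorem1} through \cref{Lema3}, since $\|\mathbf{G}^{T}_{\star}\|_h\leq\|\mathbf{G}^T\|_h$.
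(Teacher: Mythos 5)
Your proposal is correct and takes essentially the same route as the paper: both identify $\mathbf{G}_{\eta\tilde{\eta}}$ as the SLIPPERY (resp.\ S-CROSS) active wind associated with the rescaled gravitational wind $(1-\tilde{\eta})\mathbf{G}^{T}$ (resp.\ $(1-\eta)\mathbf{G}^{T}$), yielding $c_{1}=\frac{\eta-\tilde{\eta}}{1-\tilde{\eta}}$ and $c_{2}=\frac{\tilde{\eta}-\eta}{1-\eta}$ exactly as in the paper. The only difference is that the paper also records the metric-level verification, substituting the rescaled wind and $c_{1}$, $c_{2}$ into \eqref{SLIPPERY_TH} and \eqref{SLIPPERY_CROSS_TH} to recover \eqref{MAMA_general}, whereas you stop at the equation-of-motion identification, which is acceptable since identical navigation data (same direction-dependent active wind and unit self-speed) determine the same time-minimal metric.
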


\begin{proof}
i) Making use of $\mathbf{G}^{T}=\mathbf{G}_{MAT}+\mathbf{G}_{MAT}^{\bot }$,
 we see that $\mathbf{G}_{\eta \tilde{\eta}}=(\tilde{\eta}-\eta )\mathbf{%
G}_{MAT}^{\bot }+(1-\tilde{\eta})\mathbf{G}^{T}$. Since $(\eta ,\tilde{\eta}%
)\in \mathcal{S}$ and $\eta >\tilde{\eta},$ then $\tilde{\eta}\neq 1.$ So,
we can find out that
\begin{equation*}
\mathbf{G}_{\eta \tilde{\eta}}=(1-\tilde{\eta})(-\frac{\eta -\tilde{\eta}}{1-%
\tilde{\eta}}\mathbf{G}_{MAT}^{\bot }+\mathbf{G}^{T})=(1-\tilde{\eta})%
\mathbf{G}_{c_{1}},
\end{equation*}%
where $\mathbf{G}_{c_{1}}=c_{1}\mathbf{G}_{MAT}+(1-c_{1})\mathbf{G}^{T}$ is
the active wind from SLIPPERY with a particular cross-traction coefficient $c_{1}=\frac{\eta -\tilde{\eta}}{1-\tilde{\eta}}\in (0,1],$ for any $(\eta ,\tilde{\eta})\in \mathcal{S}$, where $\eta >%
\tilde{\eta}.$ According to \cite[Theorem 1.1]{slippery}, the slippery slope
metric $\tilde{F}_{c_{1}}$ corresponding to the active wind $\mathbf{G}%
_{c_{1}}$ satisfies the equation%
\begin{equation}
\tilde{F}_{c_{1}}\sqrt{\alpha ^{2}+2(1-c_{1})\bar{g}\beta \tilde{F}%
_{c_{1}}+(1-c_{1})^{2}||\mathbf{G}^{T}||_{h}^{2}\tilde{F}_{c_{1}}^{2}}%
=\alpha ^{2}+(2-c_{1})\bar{g}\beta \tilde{F}_{c_{1}}+(1-c_{1})||\mathbf{G}%
^{T}||_{h}^{2}\tilde{F}_{c_{1}}^{2}.  \label{SLIPPERY_TH}
\end{equation}%
If we now substitute $(1-\tilde{\eta})\mathbf{G}^{T}$ for $\mathbf{G}^{T}$ everywhere in \eqref{SLIPPERY_TH} including also $\tilde{F}_{c_{1}}$, and $c_{1}=\frac{\eta -\tilde{\eta}}{1-\tilde{\eta}}$, it turns out that the new $\tilde{F}_{c_{1}}$ verifies identically \eqref{MAMA_general}. Thus, we have proved the claim i).

ii) Since $(\eta ,\tilde{\eta})\in \mathcal{S}$ and $\eta <\tilde{\eta},$
then $\eta \neq 1.$ To prove ii) we consider S-CROSS with the active wind $\mathbf{G}_{c_{2}}=-c_{2}\mathbf{G}_{MAT}+\mathbf{G}%
^{T}$, where $c_{2}=\frac{\tilde{\eta}-\eta }{1-\eta }\in (0,1]$ is a
certain along-traction coefficient. By using \cite[Theorem 1.1]%
{slipperyx}, the slippery-cross-slope metric $\tilde{F}_{c_{2}}$
corresponding to $\mathbf{G}_{c_{2}}$ verifies the equation%
\begin{equation}
\tilde{F}_{c_{2}}\sqrt{\alpha ^{2}+2\bar{g}\beta \tilde{F}_{c_{2}}+||\mathbf{%
G}^{T}||_{h}^{2}\tilde{F}_{c_{2}}^{2}}=\alpha ^{2}+(2-c_{2})\bar{g}\beta 
\tilde{F}_{c_{2}}+(1-c_{2})||\mathbf{G}^{T}||_{h}^{2}\tilde{F}_{c_{2}}^{2}.
\label{SLIPPERY_CROSS_TH}
\end{equation}%
It is immediate to verify that $\mathbf{G}_{\eta \tilde{\eta}}=(1-\eta )%
\mathbf{G}_{c_{2}}$, since $\mathbf{G}_{\eta \tilde{\eta}}=(\eta -\tilde{\eta%
})\mathbf{G}_{MAT}+(1-\eta )\mathbf{G}^{T}.$ By substituting $(1-\eta )\mathbf{G}^{T}$ for $\mathbf{G}^{T}$ everywhere in \eqref{SLIPPERY_CROSS_TH}  including  $\tilde{F}_{c_{2}},$ and $c_{2}=\frac{\tilde{\eta}-\eta }{1-\eta },$ we get that the new metric $\tilde{F}_{c_{2}}$ satisfies \eqref{MAMA_general}. Hence, it follows the required claim. 
\end{proof}

\begin{remark}
To solve the slippery slope problem $\mathcal{P}_{\eta ,\tilde{\eta}}$, for each $(\eta ,\tilde{\eta})\in \mathcal{S}$, the implicit form of the slippery slope metric included in equation \eqref{MAMA_4} is sufficient\footnote{We mention that making use of a computational software, one can generate
all four roots of \eqref{MAMA_4}, however their explicit
expressions are very complicated.}, as
will be shown in the next section. Obviously, from  \eqref{MAMA_4} we can extract  the simple explicit expressions for the slippery slope metric $\tilde{F}_{\eta \tilde{\eta}}$ in the reduced
MAT and reduced ZNP cases. Namely, if $\eta =1$ and $\tilde{\eta}\in \lbrack 0,1)$, 
then $\tilde{F}_{1\tilde{\eta}}=\tilde{F}_{_{R-MAT}}=\frac{\alpha ^{2}}{%
\alpha -(1-\tilde{\eta})\bar{g}\beta }$, as well as for any $(\eta ,\tilde{\eta}%
)\in \mathcal{S}$, where $\eta =\tilde{\eta}$, $\mathcal{P}_{\eta ,\tilde{%
\eta}}$ comes from the Zermelo navigation with $\mathbf{G}_{\eta \tilde{\eta}%
}=(1-\eta )\mathbf{G}^{T}$, i.e. R-ZNP. Indeed, the condition $\eta =\tilde{%
\eta}$ reduces  \eqref{MAMA_4} to%
\begin{equation*}
\lbrack 1-(1-\eta )^{2}||\mathbf{G}^{T}||_{h}^{2}]\tilde{F}^{2}-2(1-\eta )%
\bar{g}\beta \tilde{F}-\alpha ^{2}=0,
\end{equation*}%
which admits only the positive root $\tilde{F}_{\eta \eta }=\tilde{F}%
_{_{R-ZNP}}=\frac{\sqrt{[1-(1-\eta )^{2}||\mathbf{G}^{T}||_{h}^{2}]\alpha
^{2}+(1-\eta )^{2}\bar{g}^{2}\beta ^{2}}+(1-\eta )\bar{g}\beta }{1-(1-\eta
)^{2}||\mathbf{G}^{T}||_{h}^{2}},$ when $||(1-\eta )\mathbf{G}^{T}||_{h}<1.$
With the notation 
\begin{equation*}
\tilde{\alpha}^{2}=\frac{\alpha ^{2}}{1-(1-\eta )^{2}||\mathbf{G}%
^{T}||_{h}^{2}}+\tilde{\beta}^{2}\quad \text{ and }\quad \tilde{\beta}=\frac{%
(1-\eta )\bar{g}\beta }{1-(1-\eta )^{2}||\mathbf{G}^{T}||_{h}^{2}},
\end{equation*}%
it turns out that $\ \tilde{F}_{_{R-ZNP}}=\tilde{\alpha}+\tilde{\beta}$,
i.e. the Finsler metric of Randers type which solves Zermelo's navigation problem under the
weak wind ($||(1-\eta )\mathbf{G}^{T}||_{h}<1$); see \cite{SH,CJS}. 
\end{remark}

\section{Proof of Theorem 1.2}

\label{Sec_5}

Following the strategy presented in the recent research \cite%
{slippery,cross,slipperyx} and basing on some technical computations as well as applying \cref{Prop2}, we achieve the spray coefficients related to the $%
(\eta ,\tilde{\eta})$-slope metric $\tilde{F}_{\eta \tilde{\eta}}.$  
By \eqref{geo1}, it is immediate to supply the
equations of time geodesics of $\tilde{F}_{\eta \tilde{\eta}}$.
Moreover, the argument that any such time geodesic is unitary with respect to $%
\tilde{F}_{\eta \tilde{\eta}}$ because before all else, it is a trajectory
in Zermelo's navigation developed in Step II, will help us perform the proof
of \cref{Theorem2}.

More precisely, we derive the equations of time geodesics of $\tilde{F%
}_{\eta \tilde{\eta}}$ under the restriction $\tilde{F}_{\eta \tilde{\eta}%
}=1.$ Thus, the time-minimal paths on $(M,h)$ under the action of the active
wind $G_{\eta \tilde{\eta}}$ provided by \cref{Theorem2} represent time
geodesics of $\tilde{F}_{\eta \tilde{\eta}}$ restricted to the indicatrix $%
I_{\tilde{F}_{\eta \tilde{\eta}}}$.

We start by outlining an essential property regarding the $(\eta ,\tilde{\eta%
})$-slope metric $\tilde{F}_{\eta \tilde{\eta}}$, which allows then to use %
\cref{Prop2}. Namely, since $\tilde{F}_{\eta \tilde{\eta}}$ is the root of %
\eqref{MAMA_4}, for any $(\eta ,\tilde{\eta})\in \mathcal{\tilde{S}}$, it
seems to be a promising general $(\alpha ,\beta )$-metric.

\begin{proposition}
\label{PropXX}The $(\eta ,\tilde{\eta})$-slope metric $\tilde{F}%
_{\eta \tilde{\eta}}$ is a general $(\alpha ,\beta )$-metric.
\end{proposition}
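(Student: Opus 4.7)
The plan is to exhibit an explicit rewriting of the defining equation \eqref{MAMA_general} for $\tilde{F}_{\eta\tilde{\eta}}$ in the form $\tilde{F}_{\eta\tilde{\eta}} = \alpha\,\phi(b^2,s)$, so that the definition of a general $(\alpha,\beta)$-metric from \cref{Sec_2} applies directly. First I would introduce the ansatz $\tilde{F}_{\eta\tilde{\eta}} = \alpha\phi$ with $s = \beta/\alpha$ and record the identity $\|\mathbf{G}^{T}\|_{h}^{2} = \bar{g}^{2}\,b^{2}$, where $b^{2} = \|\beta\|_{\alpha}^{2} = h^{ij}b_{i}b_{j} = \|\omega^{\sharp}\|_{h}^{2}$, which follows immediately from $\mathbf{G}^{T} = -\bar{g}\omega^{\sharp}$ and the notation \eqref{NOT}.

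Next I would substitute this ansatz into \eqref{MAMA_general} and factor $\alpha^{2}$ out of both sides. The left-hand side becomes $\alpha^{2}\phi\sqrt{1+2(1-\eta)\bar{g}s\phi+(1-\eta)^{2}\bar{g}^{2}b^{2}\phi^{2}}$ and the right-hand side becomes $\alpha^{2}\bigl[1+(2-\eta-\tilde{\eta})\bar{g}s\phi+(1-\eta)(1-\tilde{\eta})\bar{g}^{2}b^{2}\phi^{2}\bigr]$. Cancelling $\alpha^{2}$ yields an equation
\begin{equation*}
\phi\sqrt{1+2(1-\eta)\bar{g}s\phi+(1-\eta)^{2}\bar{g}^{2}b^{2}\phi^{2}} = 1+(2-\eta-\tilde{\eta})\bar{g}s\phi+(1-\eta)(1-\tilde{\eta})\bar{g}^{2}b^{2}\phi^{2}
\end{equation*}
in which $\alpha$ and $\beta$ appear only through the combinations $s$ and $b^{2}$. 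This shows that the positive root $\phi$ singled out in Step II of the proof of \cref{Theorem1} is a function of $(b^{2},s)$ alone (and of the fixed parameters $\eta,\tilde{\eta},\bar{g}$), so $\tilde{F}_{\eta\tilde{\eta}} = \alpha\,\phi(b^{2},s)$.

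To finish, I would verify that $\phi(b^{2},s)$ is positive and $C^{\infty}$ in its arguments on the admissible range $|s|\le b<b_{0}$ determined by \cref{Lema3}. Positivity is inherited from the positivity of $\tilde{F}_{\eta\tilde{\eta}}$ established in Step II. For smoothness, I would observe that $\phi$ is the unique positive root, in the sense of \cref{Prop3} applied to the data $(F,(1-\eta)\mathbf{G}^{T})$, of the degree-four polynomial obtained by clearing the square root (equivalently, dividing \eqref{MAMA_4} through by $\alpha^{4}$), which after the same rescaling reads as a polynomial in $\phi$ whose coefficients are polynomials in $s$ and $b^{2}$. Because the strong convexity condition from \cref{Lema3} guarantees that this root is simple and isolated from the other branches, the implicit function theorem yields $\phi\in C^{\infty}$ in $(b^{2},s)$.

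The main obstacle I anticipate is the smoothness claim: one must be careful that the selected positive root stays separated from the remaining roots of the quartic throughout the admissible range, so that the implicit function theorem applies uniformly. This is precisely where the strong convexity conditions ($||\mathbf{G}^{T}||_{h}<\tilde{b}_{0}$) enter decisively, since they were designed in Step II to single out a unique positive $\tilde{F}_{\eta\tilde{\eta}}$ lying strictly inside the physically relevant region; the remaining verification is routine algebraic bookkeeping on the polynomial \eqref{MAMA_4}.
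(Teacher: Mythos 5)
Your main step coincides with the paper's proof: after setting $\tilde{F}_{\eta\tilde{\eta}}=\alpha\phi$, $s=\beta/\alpha$ and $||\mathbf{G}^{T}||_{h}^{2}=\bar{g}^{2}b^{2}$, dividing the defining equation by the appropriate power of $\alpha$ (you divide \eqref{MAMA_general} by $\alpha^{2}$, the paper divides \eqref{MAMA_4} by $\alpha^{4}$ to obtain \eqref{PHI} -- the two are equivalent) shows that the equation for $\phi$ involves $\alpha$ and $\beta$ only through $b^{2}$ and $s$, and the uniqueness of the positive root from Step~II then gives $\tilde{F}_{\eta\tilde{\eta}}=\alpha\,\tilde{\phi}_{\eta\tilde{\eta}}(||\mathbf{G}^{T}||_{h}^{2},s)$. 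Up to this point you and the paper are doing the same thing.

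The divergence, and the one genuine weak spot, is your smoothness argument. You invoke the implicit function theorem on the quartic and justify it by asserting that the strong convexity condition of \cref{Lema3} ``guarantees that this root is simple and isolated.'' That is not something \cref{Lema3} gives you: strong convexity of the indicatrix $I_{\tilde{F}_{\eta\tilde{\eta}}}$ says nothing directly about the $\phi$-derivative of the quartic at the selected root. In fact, that derivative is (up to a factor of $2$) exactly the quantity $C$ of \eqref{3.2}, and its nonvanishing is a separate, nontrivial statement that the paper only establishes afterwards, in \cref{Lema45}, by a contradiction argument with several cases -- so as written your proof borrows a later result without proof. The paper avoids this circle by taking positivity and smoothness of $\tilde{F}_{\eta\tilde{\eta}}$ directly from the Zermelo construction of Step~II (the metric produced by \cref{Prop3} with navigation data $(F,(1-\eta)\mathbf{G}^{T})$ is already a Finsler metric, hence $C^{\infty}$ on $TM_{0}$), and only then reads off the regularity of $\tilde{\phi}_{\eta\tilde{\eta}}$. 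If you want to keep the implicit-function route, you can repair it without \cref{Lema45}: for fixed data the function $t\mapsto F(x,y-tW)-t$ with $W=(1-\eta)\mathbf{G}^{T}$ is strictly decreasing when $F(x,-W)<1$ (its derivative is bounded above by $F(x,-W)-1<0$), so the positive root of the irrational equation is simple; squaring to pass to the quartic preserves simplicity because the conjugate factor (the sum of the two sides) is strictly positive at that root. With that supplement your argument closes; without it, the simplicity claim is an unproved assertion.
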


\begin{proof}
To prove the claim that $\tilde{F}_{\eta \tilde{\eta}}$ is indeed a
general $(\alpha ,\beta )$-metric, we make the notation $\tilde{\phi}=%
\frac{\tilde{F}}{\alpha }$ and $s=\frac{\beta }{\alpha }.$ Now, if \ we
divide \eqref{MAMA_4} by $\alpha ^{4},$ we get the equation 
\begin{equation}
\begin{array}{c}
(1-\eta )^{2}||\mathbf{G}^{T}||_{h}^{2}[1-\left( 1-\tilde{\eta}\right) ^{2}||%
\mathbf{G}^{T}||_{h}^{2}]\tilde{\phi}^{4}+2(1-\eta )\left[ 1-\left( 2-\eta -%
\tilde{\eta}\right) \left( 1-\tilde{\eta}\right) ||\mathbf{G}^{T}||_{h}^{2}%
\right] \bar{g}s\tilde{\phi}^{3} \\ 
~ \\ 
+[1-2(1-\eta )\left( 1-\tilde{\eta}\right) ||\mathbf{G}^{T}||_{h}^{2}-\left(
2-\eta -\tilde{\eta}\right) ^{2}\bar{g}^{2}s^{2}]\tilde{\phi}^{2}-2\left(
2-\eta -\tilde{\eta}\right) \bar{g}s\tilde{\phi}-1=0.%
\end{array}
\label{PHI}
\end{equation}%
This is obviously equivalent to \eqref{MAMA_4}. Furthermore, since $\tilde{F}%
_{\eta \tilde{\eta}}$ is the sole positive root of \eqref{MAMA_4}, it
follows that \eqref{PHI} also admits a unique positive root, denoted by $%
\tilde{\phi}_{\eta \tilde{\eta}},$ for any $(\eta ,\tilde{\eta})\in \mathcal{%
\tilde{S}}$. Pointing out that $\eta $ and $\tilde{\eta}$ are only
parameters, it turns out that $\tilde{\phi}_{\eta \tilde{\eta}}$ depends on
the variables $||\mathbf{G}^{T}||_{h}^{2}=\ \bar{g}^{2}b^{2}$ and $s=\frac{\beta 
}{\alpha },$ where $\alpha $ and $\beta $ are given by \eqref{NOT}, i.e. $%
\tilde{\phi}_{\eta \tilde{\eta}}=\tilde{\phi}_{\eta \tilde{\eta}}(||\mathbf{G%
}^{T}||_{h}^{2},s)$ as well as $\tilde{\phi}_{\eta \tilde{\eta}}$ is a
positive $C^{\infty }$-function because $\tilde{F}_{\eta \tilde{\eta}%
}(x,y)=\alpha \tilde{\phi}_{\eta \tilde{\eta}}(||\mathbf{G}%
^{T}||_{h}^{2},s). $ Thus, the requested claim is proved.
\end{proof}

There are still some emerging properties regarding the function $%
\tilde{\phi}_{\eta \tilde{\eta}}$ as well as its derivatives. An essential
role in our study is played by the following identity%
\begin{equation}
\begin{array}{c}
(1-\eta )^{2}||\mathbf{G}^{T}||_{h}^{2}[1-\left( 1-\tilde{\eta}\right) ^{2}||%
\mathbf{G}^{T}||_{h}^{2}]\tilde{\phi}_{\eta \tilde{\eta}}^{4}+2(1-\eta )%
\left[ 1-\left( 2-\eta -\tilde{\eta}\right) \left( 1-\tilde{\eta}\right) ||%
\mathbf{G}^{T}||_{h}^{2}\right] \bar{g}s\tilde{\phi}_{\eta \tilde{\eta}}^{3}
\\ 
~ \\ 
+\{\left[ 1-2(1-\eta )\left( 1-\tilde{\eta}\right) ||\mathbf{G}^{T}||_{h}^{2}%
\right] \alpha ^{2}-\left( 2-\eta -\tilde{\eta}\right) ^{2}\bar{g}^{2}s^{2}\}%
\tilde{\phi}_{\eta \tilde{\eta}}^{2}-2\left( 2-\eta -\tilde{\eta}\right) 
\bar{g}s\tilde{\phi}_{\eta \tilde{\eta}}-1=0,%
\end{array}
\label{3.3}
\end{equation}%
which follows from the fact that $\tilde{\phi}_{\eta \tilde{\eta}}$ checks
identically \eqref{PHI}, for any $(\eta ,\tilde{\eta})\in \mathcal{%
\tilde{S}}$. Having the inequality $||\mathbf{G}^{T}||_{h}<\tilde{b}_{0},$
with $\tilde{b}_{0}$ defined in \eqref{Strong_C}, which  secures the strong convexity of the indicatrix $I_{\tilde{F}_{\eta \tilde{\eta}}}$ according to \cref{Theorem1}, we can apply the direct implication of \cref{Prop1}.
Hence, for any $(\eta ,\tilde{\eta})\in \mathcal{\tilde{S}}$ and $s$
satisfying $|s|\leq \frac{||\mathbf{G}^{T}||_{h}}{\bar{g}}<\frac{\tilde{b}%
_{0}}{\bar{g}}$, we have guaranteed the validity of the following
inequalities%
\begin{equation*}
\tilde{\phi}_{\eta \tilde{\eta}}-s\tilde{\phi}_{\eta \tilde{\eta}2}>0,\qquad 
\bar{g}^{2}(\tilde{\phi}_{\eta \tilde{\eta}}-s\tilde{\phi}_{\eta \tilde{\eta}%
2})+(||\mathbf{G}^{T}||_{h}^{2}-\bar{g}^{2}s^{2})\tilde{\phi}_{\eta \tilde{%
\eta}22}>0,
\end{equation*}%
when $n\geq 3,$ or only the right-hand side inequality, when $n=2$.

\begin{lemma}
\label{Lema4} Let $M$ be an $n$-dimensional manifold, $n>1,$ with the $(\eta
,\tilde{\eta})$-slope metric $\tilde{F}_{\eta \tilde{\eta}}=\alpha \tilde{%
\phi}_{\eta \tilde{\eta}}(||\mathbf{G}^{T}||_{h}^{2},s).$ For any $(\eta ,%
\tilde{\eta})\in \mathcal{\tilde{S}}$, the function $\tilde{\phi}_{\eta 
\tilde{\eta}}$ and its derivative with respect to $s,$ i.e. $\tilde{\phi}%
_{\eta \tilde{\eta}2}$ hold the following relations:%
\begin{equation}
C\tilde{\phi}_{\eta \tilde{\eta}2}=\bar{g}A\tilde{\phi}_{\eta \tilde{\eta}},%
\text{ \ \ }C(\tilde{\phi}_{\eta \tilde{\eta}}-s\tilde{\phi}_{\eta \tilde{%
\eta}2})=B,\text{ \ \ }C\tilde{\phi}_{\eta \tilde{\eta}}=B+\bar{g}sA\tilde{%
\phi}_{\eta \tilde{\eta}},\text{ \ \ }\left( 2-\eta -\tilde{\eta}\right)
B-2A=(\tilde{\eta}-\eta )\tilde{\phi}_{\eta \tilde{\eta}}^{2},  \label{3.1}
\end{equation}%
where%
\begin{equation}
\begin{array}{l}
A=-\left( 1-\eta \right) \left[ 1-\left( 2-\eta -\tilde{\eta}\right) \left(
1-\tilde{\eta}\right) ||\mathbf{G}^{T}||_{h}^{2}\right] \tilde{\phi}_{\eta 
\tilde{\eta}}^{2}+\left( 2-\eta -\tilde{\eta}\right) ^{2}\bar{g}s\tilde{\phi}%
_{\eta \tilde{\eta}}+ 2-\eta -\tilde{\eta}, \\ 
~ \\ 
B=-\left[ 1-2\left( 1-\eta \right) (1-\tilde{\eta})||\mathbf{G}^{T}||_{h}^{2}%
\right] \tilde{\phi}_{\eta \tilde{\eta}}^{2}+2\left( 2-\eta -\tilde{\eta}%
\right) \bar{g}s\tilde{\phi}_{\eta \tilde{\eta}}+2, \\ 
~ \\ 
\begin{split}
C=& \ 2\left( 1-\eta \right) ^{2}||\mathbf{G}^{T}||_{h}^{2}\left[ 1-\left( 1-%
\tilde{\eta}\right) ^{2}||\mathbf{G}^{T}||_{h}^{2}\right] \tilde{\phi}_{\eta 
\tilde{\eta}}^{3}+3(1-\eta )\left[ 1-\left( 2-\eta -\tilde{\eta}\right)
\left( 1-\tilde{\eta}\right) ||\mathbf{G}^{T}||_{h}^{2}\right] \bar{g}s%
\tilde{\phi}_{\eta \tilde{\eta}}^{2} \\
& +\{\left[ 1-2(1-\eta )\left( 1-\tilde{\eta}\right) ||\mathbf{G}%
^{T}||_{h}^{2}\right] -\left( 2-\eta -\tilde{\eta}\right) ^{2}\bar{g}%
^{2}s^{2}\}\tilde{\phi}_{\eta \tilde{\eta}}-\left( 2-\eta -\tilde{\eta}%
\right) \bar{g}s
\end{split}%
\end{array}
\label{3.2}
\end{equation}%
and $A$, $B$, $C$ are evaluated at $(||\mathbf{G}^{T}||_{h}^{2},s).$
\end{lemma}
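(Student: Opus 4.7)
The plan is to view \eqref{3.3} as a polynomial identity $P(\tilde{\phi}_{\eta\tilde{\eta}},s)\equiv 0$, where $P(\tilde{\phi},s)$ denotes the left-hand side of \eqref{PHI}, and to extract the four relations in \eqref{3.1} by one round of implicit differentiation in $s$ (with $||\mathbf{G}^{T}||_{h}^{2}$ and the two traction parameters held fixed), combined with the algebraic identity \eqref{3.3} used again as a substitution rule.

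First I would differentiate $P$ separately in $\tilde{\phi}$ and in $s$ and match the resulting monomials in $\tilde{\phi}$, $s$, $||\mathbf{G}^{T}||_{h}^{2}$ against the expressions in \eqref{3.2}. A routine inspection yields
\begin{equation*}
\partial_{\tilde{\phi}} P=2C,\qquad \partial_{s} P=-2\bar{g}\tilde{\phi}\,A.
\end{equation*}
Since $\tilde{\phi}_{\eta\tilde{\eta}}$ is a smooth root of $P=0$ (cf.\ \cref{PropXX}), implicit differentiation $\partial_{\tilde{\phi}}P\cdot\tilde{\phi}_{\eta\tilde{\eta}2}+\partial_{s}P=0$ gives at once the first relation $C\tilde{\phi}_{\eta\tilde{\eta}2}=\bar{g}A\tilde{\phi}_{\eta\tilde{\eta}}$ of \eqref{3.1}.

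For the second relation, I would substitute the first relation into $C(\tilde{\phi}_{\eta\tilde{\eta}}-s\tilde{\phi}_{\eta\tilde{\eta}2})=\tilde{\phi}_{\eta\tilde{\eta}}(C-s\bar{g}A)$, and then verify the pure polynomial identity
\begin{equation*}
\tilde{\phi}(C-s\bar{g}A)-B\;\equiv\;2\,P(\tilde{\phi},s);
\end{equation*}
invoking $P(\tilde{\phi}_{\eta\tilde{\eta}},s)=0$ then delivers the claimed equality. The third relation is just $s$ times the first plus the second, so it needs no extra work. Bookkeeping the quartic, cubic, quadratic, linear and constant powers of $\tilde{\phi}$ in the polynomial identity above is the messiest point and I expect it to be the main obstacle, but it is purely computational: every monomial that appears in $\tilde{\phi}(C-s\bar{g}A)$ and in $B$ has a counterpart in $2P$, and the parameters $(1-\eta)^{2}||\mathbf{G}^{T}||_{h}^{2}[1-(1-\tilde{\eta})^{2}||\mathbf{G}^{T}||_{h}^{2}]$, $(1-\eta)[1-(2-\eta-\tilde{\eta})(1-\tilde{\eta})||\mathbf{G}^{T}||_{h}^{2}]$, etc.\ match on the nose.

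Finally, the fourth relation is a purely algebraic identity in $A$ and $B$ that does not rely on \eqref{3.3}. Substituting the definitions \eqref{3.2}, the constant terms cancel as $2(2-\eta-\tilde{\eta})-2(2-\eta-\tilde{\eta})=0$, the $\bar{g}s\tilde{\phi}_{\eta\tilde{\eta}}$-coefficients cancel as $2(2-\eta-\tilde{\eta})^{2}-2(2-\eta-\tilde{\eta})^{2}=0$, and the $||\mathbf{G}^{T}||_{h}^{2}$-contributions to the coefficient of $\tilde{\phi}_{\eta\tilde{\eta}}^{2}$ cancel since both equal $2(2-\eta-\tilde{\eta})(1-\eta)(1-\tilde{\eta})||\mathbf{G}^{T}||_{h}^{2}$; the remaining piece of the $\tilde{\phi}_{\eta\tilde{\eta}}^{2}$-coefficient is $-(2-\eta-\tilde{\eta})+2(1-\eta)=\tilde{\eta}-\eta$, which is exactly the announced value.
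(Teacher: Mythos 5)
Your proposal is correct and follows essentially the same route as the paper's proof: the first relation comes from differentiating the defining identity \eqref{3.3} with respect to $s$ (your observations $\partial_{\tilde{\phi}}P=2C$ and $\partial_{s}P=-2\bar{g}\tilde{\phi}A$ check out), the second and third follow from it together with the identity itself, and the fourth is verified directly from \eqref{3.2}. The auxiliary identity $\tilde{\phi}(C-s\bar{g}A)-B=2P$ and the cancellation pattern giving $(2-\eta-\tilde{\eta})B-2A=(\tilde{\eta}-\eta)\tilde{\phi}_{\eta\tilde{\eta}}^{2}$ are both exact, so the argument is complete.
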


\begin{proof}
By differentiating the identity \eqref{3.3} with respect to $s,$ it follows
the first relation in \eqref{3.1}. The proof of the second identity in %
\eqref{3.1} is based on the first one and on some simple computations.
Finally, by using the notation \eqref{3.2} and \eqref{3.3}, it turns
out the last two relations in \eqref{3.1}. \end{proof}

We notice that the functions $A,$ $B,$ $C$ defined in \eqref{3.2} are
homogeneous of degree zero with respect to $y$ as a consequence of the same
homogeneity degree of $\tilde{\phi}_{\eta \tilde{\eta}},$ for any $(\eta ,%
\tilde{\eta})\in \mathcal{\tilde{S}}$.

\begin{lemma}
\label{Lema45} Let $M$ be an $n$-dimensional manifold, $n>1,$ with the $%
(\eta ,\tilde{\eta})$-slope metric $\tilde{F}_{\eta \tilde{\eta}}=\alpha 
\tilde{\phi}_{\eta \tilde{\eta}}(||\mathbf{G}^{T}||_{h}^{2},s).$ For any $%
(\eta ,\tilde{\eta})\in \mathcal{\tilde{S}}$ and $s$ such that $|s|\leq 
\frac{||\mathbf{G}^{T}||_{h}}{\bar{g}}<\frac{\tilde{b}_{0}}{\bar{g}},$ the
following statements hold:

\begin{itemize}
\item[i)] $C(||\mathbf{G}^{T}||_{h}^{2},s)\neq 0$ and%
\begin{equation}
\tilde{\phi}_{\eta \tilde{\eta}2}=\frac{\bar{g}A}{C}\tilde{\phi}_{\eta 
\tilde{\eta}},\quad \tilde{\phi}_{\eta \tilde{\eta}}-s\tilde{\phi}_{\eta 
\tilde{\eta}2}=\frac{B}{C}.  \label{333}
\end{equation}

\item[ii)] $B(||\mathbf{G}^{T}||_{h}^{2},s)\neq 0.$
\end{itemize}
\end{lemma}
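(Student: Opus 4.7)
The plan is to prove (i) and (ii) together, since the identities collected in Lemma 4.4 (equation (3.1)) tie the quantities $A$, $B$, $C$ so tightly that the vanishing of any one essentially forces the others. The central observation is the second identity of (3.1), $B=C(\tilde{\phi}_{\eta\tilde{\eta}}-s\tilde{\phi}_{\eta\tilde{\eta}2})$, combined with the strong convexity estimate $\tilde{\phi}_{\eta\tilde{\eta}}-s\tilde{\phi}_{\eta\tilde{\eta}2}>0$ supplied by Proposition~\ref{Prop1} under the assumption $||\mathbf{G}^{T}||_{h}<\tilde{b}_{0}$ secured by Theorem~\ref{Theorem1}. This shows that $B$ and $C$ vanish \emph{simultaneously}, so the whole lemma reduces to proving $C\neq 0$.

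First I would argue by contradiction for (i). Assume $C(||\mathbf{G}^{T}||_{h}^{2},s^{*})=0$ at an admissible $s^{*}$. Then $B(||\mathbf{G}^{T}||_{h}^{2},s^{*})=0$ by the observation above. From the first identity in (3.1), $C\tilde{\phi}_{\eta\tilde{\eta}2}=\bar{g}A\tilde{\phi}_{\eta\tilde{\eta}}$, using $\bar{g}>0$ and the positivity of $\tilde{\phi}_{\eta\tilde{\eta}}$, we conclude $A=0$. Substituting $A=B=0$ into the fourth identity in (3.1) yields $(\tilde{\eta}-\eta)\tilde{\phi}_{\eta\tilde{\eta}}^{2}=0$, and the strict positivity of $\tilde{\phi}_{\eta\tilde{\eta}}$ forces $\eta=\tilde{\eta}$. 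Therefore, as soon as $\eta\neq\tilde{\eta}$, the conclusion $C\neq 0$ is immediate.

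The leftover case $\eta=\tilde{\eta}$ is the main obstacle, because the fourth identity in (3.1) collapses to the tautology $2(1-\eta)B=2A$ and can no longer produce a contradiction. To handle it, I would invoke the closing Remark of Section~\ref{Sec_4}, which provides the explicit Randers-type formula $\tilde{\phi}_{\eta\eta}(||\mathbf{G}^{T}||_{h}^{2},s)=\frac{\sqrt{K+(1-\eta)^{2}\bar{g}^{2}s^{2}}+(1-\eta)\bar{g}s}{K}$ with $K=1-(1-\eta)^{2}||\mathbf{G}^{T}||_{h}^{2}>0$. An elementary computation then gives $B=-\tilde{\phi}_{\eta\eta}^{2}$ and $\tilde{\phi}_{\eta\eta}-s\tilde{\phi}_{\eta\eta 2}=1/\sqrt{K+(1-\eta)^{2}\bar{g}^{2}s^{2}}$, whence $C=-\tilde{\phi}_{\eta\eta}^{2}\sqrt{K+(1-\eta)^{2}\bar{g}^{2}s^{2}}\neq 0$. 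This closes the last gap and establishes $C\neq 0$ for every $(\eta,\tilde{\eta})\in\tilde{\mathcal{S}}$ and every admissible $s$.

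With $C\neq 0$ in hand, the two formulas (\ref{333}) follow at once by dividing the first and second identities of (3.1) by $C$. Part (ii) is then immediate: the relation $B=C\,(\tilde{\phi}_{\eta\tilde{\eta}}-s\tilde{\phi}_{\eta\tilde{\eta}2})$ expresses $B$ as the product of a nonzero factor and a strictly positive one, so $B\neq 0$. The computational step in the $\eta=\tilde{\eta}$ case is expected to be the only place where one cannot argue purely by the algebraic identities of Lemma~\ref{Lema4}, and it is also the spot where the explicit Randers description of R-ZNP pays off.
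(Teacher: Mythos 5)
Your argument for part i) is sound and close in spirit to the paper's: assuming $C=0$ at an admissible $s$, the identities \eqref{3.1} force $A=B=0$ and then $\eta=\tilde{\eta}$, and the remaining diagonal case is excluded. (The paper excludes it by reducing \eqref{3.3} to the contradiction \eqref{3.4}, whereas you use the explicit R-ZNP Randers form from the closing Remark of \cref{Sec_4}; both are legitimate. Note a small slip: with $K\tilde{\phi}_{\eta\eta}^{2}-2(1-\eta)\bar{g}s\tilde{\phi}_{\eta\eta}-1=0$ one gets $B=+\tilde{\phi}_{\eta\eta}^{2}$ and $C=+\tilde{\phi}_{\eta\eta}^{2}\sqrt{K+(1-\eta)^{2}\bar{g}^{2}s^{2}}$, not the negatives; this does not affect the nonvanishing conclusion.)

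The genuine gap is in part ii). Your reduction of ii) to i) rests on the strict inequality $\tilde{\phi}_{\eta\tilde{\eta}}-s\tilde{\phi}_{\eta\tilde{\eta}2}>0$, which you attribute to \cref{Prop1} under $||\mathbf{G}^{T}||_{h}<\tilde{b}_{0}$. But \cref{Prop1} yields $\phi-s\phi_{2}>0$ only when $n\geq 3$; for $n=2$ (which the lemma covers, since $n>1$) the strong convexity hypothesis gives only $\phi-s\phi_{2}+(b^{2}-s^{2})\phi_{22}>0$, as the paper states explicitly just before the lemma. Hence, in dimension two, from $B=C\,(\tilde{\phi}_{\eta\tilde{\eta}}-s\tilde{\phi}_{\eta\tilde{\eta}2})$ and $C\neq 0$ you cannot conclude $B\neq 0$, because the factor $\tilde{\phi}_{\eta\tilde{\eta}}-s\tilde{\phi}_{\eta\tilde{\eta}2}$ might a priori vanish; likewise your claim that $B$ and $C$ "vanish simultaneously" only holds in the direction $C=0\Rightarrow B=0$. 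Your argument is in fact circular in the critical case: the whole point of statement ii), as the remark following the lemma makes clear, is to establish $\tilde{\phi}_{\eta\tilde{\eta}}-s\tilde{\phi}_{\eta\tilde{\eta}2}\neq 0$ when $n=2$ via $\tilde{\phi}_{\eta\tilde{\eta}}-s\tilde{\phi}_{\eta\tilde{\eta}2}=\frac{B}{C}$. This is why the paper proves ii) by an independent contradiction argument: assuming $B(||\mathbf{G}^{T}||_{h}^{2},\tilde{s})=0$, it derives the relations \eqref{SI}--\eqref{SII}, splits into the cases $1-2(1-\eta)(1-\tilde{\eta})||\mathbf{G}^{T}||_{h}^{2}\neq 0$ and $=0$, and shows in each case that the resulting value of $\tilde{s}^{2}$ is incompatible with $|\tilde{s}|\leq b$ under the bound $||\mathbf{G}^{T}||_{h}<\tilde{b}_{0}$. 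Some argument of this kind (not relying on the positivity of $\tilde{\phi}_{\eta\tilde{\eta}}-s\tilde{\phi}_{\eta\tilde{\eta}2}$) is needed to close part ii) for $n=2$; your proposal as written only establishes ii) for $n\geq 3$.
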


\begin{proof}
i) Clearly, if $\eta =\tilde{\eta}=1,$ then $C=\tilde{\phi}_{\eta \tilde{\eta%
}}>0.$ Now we prove that $C(||\mathbf{G}^{T}||_{h}^{2},s)\neq 0$, for any $%
(\eta ,\tilde{\eta})\in \mathcal{S}.$ We assume by contradiction that there
exists $s_{0}\in \lbrack -b,b],$ $b=\frac{||\mathbf{G}^{T}||_{h}}{\bar{g}}<%
\frac{\tilde{b}_{0}}{\bar{g}}$, with $\tilde{b}_{0}$ defined by %
\eqref{Strong_C}, such that $C(||\mathbf{G}^{T}||_{h}^{2},s_{0})=0.$ With
this assumption, due to \eqref{3.1}, we get 
\begin{equation}
A(||\mathbf{G}^{T}||_{h}^{2},s_{0})=B(||\mathbf{G}^{T}||_{h}^{2},s_{0})=(%
\eta -\tilde{\eta})\tilde{\phi}_{\eta \tilde{\eta}}^{2}(||\mathbf{G}%
^{T}||_{h}^{2},s_{0})=0.  \label{00}
\end{equation}%
Since $\tilde{\phi}_{\eta \tilde{\eta}}(||\mathbf{G}^{T}||_{h}^{2},s_{0})>0$
for any $(\eta ,\tilde{\eta})\in \mathcal{\tilde{S}}$, the last equality in %
\eqref{00} turns out that the only possibility is that $\eta =\tilde{\eta}%
\neq 1$, i.e. $(\eta ,\tilde{\eta})\in \mathcal{L}_{0}$. By using this fact
and \eqref{00}, the identity \eqref{3.3} is reduced to%
\begin{equation}
(1-\eta )^{2}||\mathbf{G}^{T}||_{h}^{2}[1-(1-\eta )^{2}||\mathbf{G}%
^{T}||_{h}^{2}]\tilde{\phi}_{\eta \tilde{\eta}}^{4}+[2(1-\eta )\bar{g}s_{0}%
\tilde{\phi}_{\eta \tilde{\eta}}+1]^{2}=0,  \label{3.4}
\end{equation}%
where $\tilde{\phi}_{\eta \tilde{\eta}}$ is evaluated at $(||\mathbf{G}%
^{T}||_{h}^{2},s_{0})$ and it is with $\eta =\tilde{\eta}\neq 1.$ As $||%
\mathbf{G}^{T}||_{h}<\frac{1}{1-\eta }$ for $(\eta ,\tilde{\eta})\in 
\mathcal{L}_{0}$, \eqref{3.4} contradicts the fact that $(1-\eta )^{2}||%
\mathbf{G}^{T}||_{h}^{2}[1-(1-\eta )^{2}||\mathbf{G}^{T}||_{h}^{2}]\tilde{%
\phi}_{\eta \tilde{\eta}}^{4}(||\mathbf{G}^{T}||_{h}^{2},s_{0})>0.$ Thus, we
have shown that $C\neq 0$ everywhere. Moreover, making use of \eqref{3.1},
the claims \eqref{333} are fulfilled.

ii) If $\eta =\tilde{\eta}=1,$ then $B=\tilde{\phi}_{\eta \tilde{\eta}%
}^{2}=1.$ However, it remains to prove that $B(||\mathbf{G}%
^{T}||_{h}^{2},s)\neq 0$, for any $(\eta ,\tilde{\eta})\in \mathcal{S}$. We
assume towards a contradiction that there is $\tilde{s}\in \lbrack -b,b],$ 
$b=\frac{||\mathbf{G}^{T}||_{h}}{\bar{g}}<\frac{\tilde{b}_{0}}{\bar{g}}$,
with $\tilde{b}_{0}$ defined by \eqref{Strong_C}, such that $B(||\mathbf{G}%
^{T}||_{h}^{2},\tilde{s})=0$. So, we are searching now for such an $\tilde{s}
$.

On one hand, since $\tilde{\phi}_{\eta \tilde{\eta}}(||\mathbf{G}%
^{T}||_{h}^{2},\tilde{s})>0,$ $C(||\mathbf{G}^{T}||_{h}^{2},\tilde{s})\neq 0$
and $B(||\mathbf{G}^{T}||_{h}^{2},\tilde{s})=0$, the third formula in %
\eqref{3.1} with $s=\tilde{s}$ implies that $\tilde{s}$ $\neq 0$. On the
other hand, due to our assumption, by the second formula in \eqref{3.2}, it
follows that $\tilde{\phi}_{\eta \tilde{\eta}}(||\mathbf{G}^{T}||_{h}^{2},%
\tilde{s})$ satisfies the polynomial equation%
\begin{equation}
\lbrack 1-2(1-\eta )(1-\tilde{\eta})||\mathbf{G}^{T}||_{h}^{2}]\tilde{\phi}%
_{\eta \tilde{\eta}}^{2}-2(2-\eta -\tilde{\eta})\bar{g}\tilde{s}\tilde{\phi}%
_{\eta \tilde{\eta}}-2=0.  \label{SI}
\end{equation}%
Moreover, for $s=\tilde{s}$ and for any $(\eta ,\tilde{\eta})\in \mathcal{S}$, \eqref{3.3} is reduced to 
\begin{equation}
\begin{array}{c}
2(1-\eta )^{2}\zeta ||\mathbf{G}^{T}||_{h}^{2}\tilde{\phi}_{\eta \tilde{\eta}%
}^{2}+\left[ 2-3\eta +\tilde{\eta}-2\left( 2-\eta -\tilde{\eta}\right)
(1-\eta )\left( 1-\tilde{\eta}\right) ||\mathbf{G}^{T}||_{h}^{2}\right] \bar{%
g}\tilde{s}\tilde{\phi}_{\eta \tilde{\eta}} \\ 
\\ 
+1-2(1-\eta )\left( 1-\tilde{\eta}\right) ||\mathbf{G}^{T}||_{h}^{2}=0,%
\end{array}
\label{SII}
\end{equation}%
where $\zeta $ denotes the term $1-\left( 1-\tilde{\eta}\right) ^{2}||%
\mathbf{G}^{T}||_{h}^{2}$. Since $||\mathbf{G}^{T}||_{h}<\tilde{b}_{0}$,
with $\tilde{b}_{0}$ defined by \eqref{Strong_C}, it turns out that $\zeta
=1-\left( 1-\tilde{\eta}\right) ^{2}||\mathbf{G}^{T}||_{h}^{2}\neq 0$ for
any $(\eta ,\tilde{\eta})\in \mathcal{\tilde{S}}.\,$\ Nevertheless, there
may exist some $(\eta ,\tilde{\eta})\in \mathcal{S}\smallsetminus (\mathcal{L%
}_{1}\cup \mathcal{L}_{2})$ such that $1-2(1-\eta )(1-\tilde{\eta})||\mathbf{%
G}^{T}||_{h}^{2}=0.$ Thus, we have to analyze two cases.
\paragraph{Case a.} If $1-2(1-\eta )(1-\tilde{\eta})||\mathbf{G}^{T}||_{h}^{2}\neq 0,$ for
any $(\eta ,\tilde{\eta})\in \mathcal{S},$ then due to equations \eqref{SI} and %
\eqref{SII}, we get\bigskip 
\begin{equation}
\lbrack 1+4(1-\eta )\left( \tilde{\eta}-\eta \right) ||\mathbf{G}%
^{T}||_{h}^{2}]\tilde{\phi}_{\eta \tilde{\eta}}+4\left( \tilde{\eta}-\eta
\right) \bar{g}\tilde{s}=0.  \label{SS}
\end{equation}%
The last equation provides a contradiction when $(\eta ,\tilde{\eta})\in 
\mathcal{L}_{0}.$ Thus, $\eta \neq \tilde{\eta}$ and moreover, since $\tilde{%
s}\neq 0$ and $\tilde{\phi}_{\eta \tilde{\eta}}(||\mathbf{G}^{T}||_{h}^{2},%
\tilde{s})>0$, it turns out that $1+4(1-\eta )\left( \tilde{\eta}-\eta
\right) ||\mathbf{G}^{T}||_{h}^{2}\neq 0$ and 
\begin{equation}
\tilde{\phi}_{\eta \tilde{\eta}}(||\mathbf{G}^{T}||_{h}^{2},\tilde{s})=-%
\frac{4\left( \tilde{\eta}-\eta \right) \bar{g}\tilde{s}}{1+4(1-\eta )\left( 
\tilde{\eta}-\eta \right) ||\mathbf{G}^{T}||_{h}^{2}}.  \label{SIII}
\end{equation}%
Once we have \eqref{SIII} for any $(\eta ,\tilde{\eta})\in \mathcal{%
S\smallsetminus L}_{0}$, we can go with it into \eqref{SI} and the result
is  
\begin{equation*}
4\left( \tilde{\eta}-\eta \right) \bar{g}^{2}\tilde{s}^{2}[2-3\eta +\tilde{%
\eta}+4(\tilde{\eta}-\eta )(1-\eta )^{2}||\mathbf{G}^{T}||_{h}^{2}]=[1+4(1-%
\eta )\left( \tilde{\eta}-\eta \right) ||\mathbf{G}^{T}||_{h}^{2}]^{2}.
\end{equation*}%
Since the right-hand side of this is positive, it follows that%
\begin{equation*}
\left( \tilde{\eta}-\eta \right) [2-3\eta +\tilde{\eta}+4(\tilde{\eta}-\eta
)(1-\eta )^{2}||\mathbf{G}^{T}||_{h}^{2}]>0
\end{equation*}%
and therefore, we obtain%
\begin{equation*}
\tilde{s}^{2}=\frac{[1+4(1-\eta )\left( \tilde{\eta}-\eta \right) ||\mathbf{G%
}^{T}||_{h}^{2}]^{2}}{4\left( \tilde{\eta}-\eta \right) \bar{g}^{2}[2-3\eta +%
\tilde{\eta}+4(\tilde{\eta}-\eta )(1-\eta )^{2}||\mathbf{G}^{T}||_{h}^{2}]}
\end{equation*}%
which contradicts $\tilde{s}^{2}\in (0,b^{2}],$ for any $(\eta ,\tilde{\eta}%
)\in \mathcal{S\smallsetminus L}_{0},$ due to the condition $||\mathbf{G}%
^{T}||_{h}<\tilde{b}_{0}$, where $\tilde{b}_{0}$ is defined by %
\eqref{Strong_C}. Indeed, since we must have $\tilde{s}^{2}\leq b^{2},$ this
implies $||\mathbf{G}^{T}||_{h}\geq \frac{1}{2|\eta -\tilde{\eta}|}$ for any 
$(\eta ,\tilde{\eta})\in \mathcal{S}\smallsetminus \mathcal{L}_{0},$
noticing that $\frac{1}{2|\eta -\tilde{\eta}|}>\frac{1}{1-\tilde{\eta}}$ for
any $(\eta ,\tilde{\eta})\in \mathcal{D}_{1}\cup \mathcal{D}%
_{2}\smallsetminus \mathcal{L}_{0}.$

\paragraph{Case b.} If $1-2(1-\eta )(1-\tilde{\eta})||\mathbf{G}^{T}||_{h}^{2}=0,$ for some $%
(\eta ,\tilde{\eta})\in \mathcal{S}\smallsetminus (\mathcal{L}_{1}\cup 
\mathcal{L}_{2})$, then by \eqref{SI}, it follows that 
\begin{equation}
\tilde{\phi}_{\eta \tilde{\eta}}(||\mathbf{G}^{T}||_{h}^{2},\tilde{s})=-%
\frac{1}{(2-\eta -\tilde{\eta})\bar{g}\tilde{s}}.  \label{SIV}
\end{equation}%
The last equation together with \eqref{SII} lead to $\tilde{s}$ which
satisfies the relation
\begin{equation}
\tilde{s}^{2}\left( \tilde{\eta}-\eta \right) =\frac{1-2\eta +\tilde{\eta}}{%
4(1-\tilde{\eta})(2-\eta -\tilde{\eta})\bar{g}^{2}}.  \label{s_tilde}
\end{equation}%
Obviously, when $\eta =\tilde{\eta}$, \eqref{s_tilde} provides a
contradiction. Therefore, it remains to study \eqref{s_tilde} when $(\eta ,%
\tilde{\eta})\in \mathcal{S}\smallsetminus (\bigcup\limits _{i=0}^{2}\mathcal{L}_{i}).$
Since $\tilde{s}\neq 0$ and $\eta \neq \tilde{\eta},$ it follows that $%
1-2\eta +\tilde{\eta}\neq 0.$ Now, making use of $\tilde{s}^{2}\leq b^{2}=\frac{%
||\mathbf{G}^{T}||_{h}^{2}}{\bar{g}^{2}}$, it turns out that there exist $%
(\eta ,\tilde{\eta})\in \mathcal{\tilde{D}}\subset \mathcal{S}\smallsetminus
(\bigcup\limits _{i=0}^{2}\mathcal{L}_{i})$ such that%
\begin{equation*}
1-2(1-\eta )(1-\tilde{\eta})||\mathbf{G}^{T}||_{h}^{2}=0,
\end{equation*}%
where $\mathcal{\tilde{D}}=\mathcal{\tilde{D}}_{3}\cup \mathcal{\tilde{D}}%
_{4}$ and 
\begin{equation*}
\begin{array}{l}
\mathcal{\tilde{D}}_{3}=\left\{ (\eta ,\tilde{\eta})\in \mathcal{S}\text{ }|%
\text{ }\frac{1}{2}<\eta <1,\text{ }\tilde{\eta}<2\eta -1\right\} \subset 
\mathcal{D}_{3}, \\ 
~ \\ 
\mathcal{\tilde{D}}_{4}=\left\{ (\eta ,\tilde{\eta})\in \mathcal{S}\text{ }|%
\text{ }\eta \leq 2\tilde{\eta}-1,\text{ }\frac{1}{2}\leq \tilde{\eta}%
<1\right\} \subset \mathcal{D}_{4}.%
\end{array}%
\end{equation*}%
However, the fact that $||\mathbf{G}^{T}||_{h}<\frac{1}{2|\eta -\tilde{\eta}|%
}$ on $\mathcal{\tilde{D}}$ is contradicted.

Summing up the above findings, we have proved that $B(||\mathbf{G}%
^{T}||_{h}^{2},s)\neq 0$, for any $s\in \lbrack -b,b],$ $b=\frac{||\mathbf{%
G}^{T}||_{h}}{\bar{g}}<\frac{\tilde{b}_{0}}{\bar{g}}$, with $\tilde{b}_{0}$
defined by \eqref{Strong_C}.

\end{proof}

We remark that basing on \cref{Prop1} it is known that $\tilde{\phi}_{\eta 
\tilde{\eta}}-s\tilde{\phi}_{\eta \tilde{\eta}2}>0,$ when $n\geq 3,$ for any 
$(\eta ,\tilde{\eta})\in \mathcal{\tilde{S}}$ and $s$ such that $|s|\leq 
\frac{||\mathbf{G}^{T}||_{h}}{\bar{g}}<\frac{\tilde{b}_{0}}{\bar{g}}$. Now 
according to \cref{Lema4}, since $\tilde{\phi}_{\eta \tilde{\eta}}-s\tilde{%
\phi}_{\eta \tilde{\eta}2}=\frac{B}{C}$, we have proved that $\tilde{\phi}%
_{\eta \tilde{\eta}}-s\tilde{\phi}_{\eta \tilde{\eta}2}\neq 0$ also when $%
n=2$, for any $(\eta ,\tilde{\eta})\in \mathcal{\tilde{S}}$ and $|s|\leq 
\frac{||\mathbf{G}^{T}||_{h}}{\bar{g}}<\frac{\tilde{b}_{0}}{\bar{g}}.$

\begin{lemma}
\label{Lema5} Let $M$ be an $n$-dimensional manifold, $n>1,$ with the $(\eta
,\tilde{\eta})$-slope metric $\tilde{F}_{\eta \tilde{\eta}}=\alpha \tilde{%
\phi}_{\eta \tilde{\eta}}(||\mathbf{G}^{T}||_{h}^{2},s).$ For any $(\eta ,%
\tilde{\eta})\in \mathcal{\tilde{S}}$, the first order derivative of the
function $\tilde{\phi}_{\eta \tilde{\eta}}$ with respect to $b^{2}=\frac{||%
\mathbf{G}^{T}||_{h}^{2}}{\bar{g}^{2}},$ i.e. $\tilde{\phi}_{\eta \tilde{\eta%
}1}$ and the second order derivatives $\tilde{\phi}_{\eta \tilde{\eta}12}$
and $\tilde{\phi}_{\eta \tilde{\eta}22}$ hold the following relations: 
\begin{equation}
\begin{array}{l}
\tilde{\phi}_{\eta \tilde{\eta}1}=\frac{(1-\eta )\bar{g}^{2}}{2C}[\left( 1-%
\tilde{\eta}\right) B-(\tilde{\eta}-\eta )\tilde{\phi}_{\eta \tilde{\eta}%
}^{2}]\tilde{\phi}_{\eta \tilde{\eta}}^{2},\quad \\ 
\\ 
\tilde{\phi}_{\eta \tilde{\eta}12}=\frac{(1-\eta )\bar{g}^{3}}{2C^{3}}%
\left\{ A(B+C\tilde{\phi}_{\eta \tilde{\eta}})[\left( 1-\tilde{\eta}\right)
B-(\tilde{\eta}-\eta )\tilde{\phi}_{\eta \tilde{\eta}}^{2}]+(\tilde{\eta}%
-\eta )^{2}[2+(1-\eta )\bar{g}s\tilde{\phi}_{\eta \tilde{\eta}}]\tilde{\phi}%
_{\eta \tilde{\eta}}^{4}\right\} \tilde{\phi}_{\eta \tilde{\eta}}, \\ 
\\ 
\tilde{\phi}_{\eta \tilde{\eta}22}=\frac{\bar{g}^{2}}{C^{3}}[A^{2}B+(\tilde{%
\eta}-\eta )^{2}\tilde{\phi}_{\eta \tilde{\eta}}^{4}].%
\end{array}
\label{RRR}
\end{equation}
\end{lemma}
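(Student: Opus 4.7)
The plan is to obtain all three formulas in \eqref{RRR} by implicit differentiation of the polynomial identity \eqref{3.3}, combined with systematic use of the identities collected in \eqref{3.1}. Write $P(\tilde\phi,s,b^{2})$ for the left-hand side of \eqref{3.3} viewed as a polynomial in $(\tilde\phi,s,b^{2})$, via the substitution $||\mathbf{G}^{T}||_{h}^{2}=\bar g^{2}b^{2}$; then $\tilde\phi_{\eta\tilde\eta}$ satisfies $P\equiv 0$, so that $\tilde\phi_{\eta\tilde\eta 1}=-P_{b^{2}}/P_{\tilde\phi}$ and $\tilde\phi_{\eta\tilde\eta 2}=-P_{s}/P_{\tilde\phi}$ whenever $P_{\tilde\phi}\neq 0$. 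A direct computation of the three partial derivatives of $P$ yields $P_{\tilde\phi}=2C$ and $P_{s}=-2\bar g A\tilde\phi_{\eta\tilde\eta}$, which reproduces at once the first identity of \eqref{3.1} and, by \cref{Lema45}, guarantees that the denominator $2C$ never vanishes.

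For the first formula in \eqref{RRR}, I would differentiate $P$ with respect to $b^{2}$ and factor out $(1-\eta)\bar g^{2}\tilde\phi_{\eta\tilde\eta}^{2}$. The surviving bracket, $(1-\eta)[1-2(1-\tilde\eta)^{2}\bar g^{2}b^{2}]\tilde\phi_{\eta\tilde\eta}^{2}-2(2-\eta-\tilde\eta)(1-\tilde\eta)\bar g s\tilde\phi_{\eta\tilde\eta}-2(1-\tilde\eta)$, must then be recognised as $-(1-\tilde\eta)B+(\tilde\eta-\eta)\tilde\phi_{\eta\tilde\eta}^{2}$. This is a purely algebraic check using the expression of $B$ in \eqref{3.2} together with the elementary cancellation $(1-\tilde\eta)+(\tilde\eta-\eta)=1-\eta$ in the coefficient of $\tilde\phi_{\eta\tilde\eta}^{2}$. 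Dividing by $-P_{\tilde\phi}=-2C$ then gives the stated expression for $\tilde\phi_{\eta\tilde\eta 1}$.

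For the two second-order derivatives I would avoid differentiating $P$ twice and instead differentiate the closed formulas $\tilde\phi_{\eta\tilde\eta 2}=\bar g A\tilde\phi_{\eta\tilde\eta}/C$ from \eqref{333} and $\tilde\phi_{\eta\tilde\eta 1}$ just established. Applying the quotient rule to $\tilde\phi_{\eta\tilde\eta 2}$ in $s$ produces an expression involving the total $s$-derivatives of the rational functions $A$ and $C$; after substituting $\tilde\phi_{\eta\tilde\eta 2}=\bar g A\tilde\phi_{\eta\tilde\eta}/C$ everywhere and reducing each occurrence of $\tilde\phi_{\eta\tilde\eta}^{k}$ by means of $C\tilde\phi_{\eta\tilde\eta}=B+\bar g sA\tilde\phi_{\eta\tilde\eta}$ and $(2-\eta-\tilde\eta)B-2A=(\tilde\eta-\eta)\tilde\phi_{\eta\tilde\eta}^{2}$ from \eqref{3.1}, the numerator is expected to collapse to $\bar g^{2}[A^{2}B+(\tilde\eta-\eta)^{2}\tilde\phi_{\eta\tilde\eta}^{4}]$, giving the third formula of \eqref{RRR}. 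The mixed derivative $\tilde\phi_{\eta\tilde\eta 12}$ is treated in exactly the same fashion by taking the $s$-derivative of the formula for $\tilde\phi_{\eta\tilde\eta 1}$ (or, equivalently, the $b^{2}$-derivative of $\tilde\phi_{\eta\tilde\eta 2}$), and the same two identities from \eqref{3.1} are the simplifying ingredient.

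The main obstacle is not conceptual but combinatorial: the intermediate expressions contain many monomials in $\tilde\phi_{\eta\tilde\eta}$, $\bar g s$ and $\bar g^{2}b^{2}$ that have to be recognised as instances of the $A$, $B$, $C$ alphabet before any cancellation becomes visible. My guiding rule is never to expand $C$ or $B$ explicitly; instead, I would repeatedly eliminate higher powers of $\tilde\phi_{\eta\tilde\eta}$ by the last two identities of \eqref{3.1}, which is precisely what forces the right-hand sides of \eqref{RRR} to be such compact polynomials in $A$, $B$, $C$ and $\tilde\phi_{\eta\tilde\eta}$ and keeps the whole calculation finite.
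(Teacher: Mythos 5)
Your proposal is correct and takes essentially the same route as the paper: the first formula is obtained by implicit differentiation of the quartic identity \eqref{3.3} (the paper differentiates it with respect to $||\mathbf{G}^{T}||_{h}^{2}$, which is exactly your $-P_{b^{2}}/P_{\tilde{\phi}}$ computation, with $P_{\tilde{\phi}}=2C$ and $P_{s}=-2\bar{g}A\tilde{\phi}_{\eta\tilde{\eta}}$ recovering \eqref{333}). For the second-order formulas the paper does precisely what you propose, namely it computes the total $s$-derivatives $A_{2},B_{2},C_{2}$, applies the quotient rule to the closed expressions for $\tilde{\phi}_{\eta\tilde{\eta}1}$ and $\tilde{\phi}_{\eta\tilde{\eta}2}$, and collapses the result using $C\tilde{\phi}_{\eta\tilde{\eta}}=B+\bar{g}sA\tilde{\phi}_{\eta\tilde{\eta}}$ and $(2-\eta-\tilde{\eta})B-2A=(\tilde{\eta}-\eta)\tilde{\phi}_{\eta\tilde{\eta}}^{2}$ from \eqref{3.1}.
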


\begin{proof}
By differentiating the identity \eqref{3.3} with respect to $||\mathbf{G}%
^{T}||_{h}^{2}$, we get%
\begin{equation*}
\frac{\partial \tilde{\phi}_{\eta \tilde{\eta}}}{\partial ||\mathbf{G}%
^{T}||_{h}^{2}}=\frac{1-\eta }{2C}[(1-\tilde{\eta})B-(\tilde{\eta}-\eta )%
\tilde{\phi}_{\eta \tilde{\eta}}^{2}]\tilde{\phi}_{\eta \tilde{\eta}}^{2}.
\end{equation*}%
If we now substitute this into $\tilde{\phi}_{\eta \tilde{\eta}1}=\bar{g}%
^{2}\frac{\partial \tilde{\phi}_{\eta \tilde{\eta}}}{\partial ||\mathbf{G}%
^{T}||_{h}^{2}},$ the first relation in \eqref{RRR} follows. 
 Differentiating the functions \eqref{3.2} with respect to $s$, together with \eqref{3.1}
yield the following identities 
\begin{eqnarray*}
A_{2} &=&\frac{\bar{g}}{C}[2A^{2}+(2-\eta -\tilde{\eta})(\tilde{\eta}-\eta )%
\tilde{\phi}_{\eta \tilde{\eta}}^{2}],\qquad B_{2}=\frac{2\bar{g}}{C}[AB+(%
\tilde{\eta}-\eta )\tilde{\phi}_{\eta \tilde{\eta}}^{2}], \\
C_{2} &=&-\frac{\bar{g}}{C\tilde{\phi}_{\eta \tilde{\eta}}}\{AB-(\tilde{\eta}%
-\eta )[2+(2-\eta -\tilde{\eta})\bar{g}s\tilde{\phi}_{\eta \tilde{\eta}}]%
\tilde{\phi}_{\eta \tilde{\eta}}^{2}\}+3\bar{g}A,
\end{eqnarray*}%
where $A_{2}=\frac{\partial A}{\partial s},$ $B_{2}=\frac{\partial B}{%
\partial s}$ and $C_{2}=\frac{\partial B}{\partial s}.$ All these, along with%
\begin{equation*}
\begin{array}{l}
\tilde{\phi}_{\eta \tilde{\eta}12}=\frac{(1-\eta )\bar{g}^{2}}{2C^{2}}\{(1-%
\tilde{\eta})B_{2}C+2\bar{g}A[\left( 1-\tilde{\eta}\right) B-2(\tilde{\eta}%
-\eta )\tilde{\phi}_{\eta \tilde{\eta}}^{2}]-[\left( 1-\tilde{%
\eta}\right) B-(\tilde{\eta}-\eta )\tilde{\phi}_{\eta \tilde{\eta}%
}^{2}]C_{2}\}\tilde{\phi}_{\eta \tilde{\eta}}^{2}, \\ 
\\ 
\tilde{\phi}_{\eta \tilde{\eta}22}=\frac{\bar{g}}{C^{2}}(A_{2}C+\bar{g}%
A^{2}-AC_{2})\tilde{\phi}_{\eta \tilde{\eta}},%
\end{array}%
\end{equation*}%
give the last two formulas in \eqref{RRR}. \end{proof}

We are now in position to provide the spray coefficients corresponding to
the general $(\alpha ,\beta )$-metric $\tilde{F}_{\eta \tilde{\eta}}.$

\begin{lemma}
\label{Prop5} Let $M$ be an $n$-dimensional manifold, $n>1,$ with the $(\eta
,\tilde{\eta})$-slope metric $\tilde{F}_{\eta \tilde{\eta}},$ having a 
cross-traction coefficient $\eta \in \lbrack 0,1]$ and an along-traction
coefficient $\tilde{\eta}\in \lbrack 0,1]$. Then the relationship between
the spray coefficients $\tilde{\mathcal{G}}_{\eta \tilde{\eta}}^{i}$ of $%
\tilde{F}_{\eta \tilde{\eta}}$ and the spray coefficients \linebreak $%
\mathcal{G}_{\alpha }^{i}=\frac{1}{4}h^{im}\left( 2\frac{\partial h_{jm}}{%
\partial x^{k}}-\frac{\partial h_{jk}}{\partial x^{m}}\right) y^{j}y^{k}$ of 
$\alpha $ is given by 
\begin{equation}
\tilde{\mathcal{G}}_{\eta \tilde{\eta}}^{i}(x,y)=\mathcal{G}_{\alpha
}^{i}(x,y)+\left[ \mathit{\Theta }(r_{00}+2\alpha ^{2}Rr)+\alpha \mathit{%
\Omega }r_{0}\right] \frac{y^{i}}{\alpha }-\left[ \mathit{\Psi }%
(r_{00}+2\alpha ^{2}Rr)+\alpha \mathit{\Pi }r_{0}\right] \frac{w^{i}}{\bar{g}%
}-\alpha ^{2}Rr^{i},  \label{SPRAY}
\end{equation}
$i=1,...,n$, where

\begin{equation}
\begin{array}{l}
r_{00}=-\frac{1}{\bar{g}}w_{i|j}y^{i}y^{j},\text{ \ \ }r_{0}=\frac{1}{\bar{g}%
^{2}}w_{i|j}w^{j}y^{i},\text{ \ \ }r=-\frac{1}{\bar{g}^{3}}w_{i|j}w^{i}w^{j},%
\text{ \ \ }r^{i}=\frac{1}{\bar{g}^{2}}w_{\text{ }|j}^{i}w^{j}, \\ 
~ \\ 
R=\frac{\left( 1-\eta \right) \bar{g}^{2}}{2\alpha ^{4}B}[\left( 1-\tilde{%
\eta}\right) \alpha ^{2}B-\left( \tilde{\eta}-\eta \right) \tilde{F}_{\tilde{%
\eta}}^{2}]\tilde{F}_{\eta \tilde{\eta}}^{2}, \\ 
\\ 
\mathit{\Theta }=\frac{\bar{g}\alpha }{2E\tilde{F}_{\eta \tilde{\eta}}}%
[\alpha ^{6}AB^{2}-\left( \tilde{\eta}-\eta \right) ^{2}\bar{g}\beta \tilde{F%
}_{\eta \tilde{\eta}}^{5}],\text{ \ \ }\mathit{\Psi }=\frac{\bar{g}%
^{2}\alpha ^{2}}{2E}[\alpha ^{4}A^{2}B+\left( \tilde{\eta}-\eta \right) ^{2}%
\tilde{F}_{\eta \tilde{\eta}}^{4}], \\ 
~ \\ 
\mathit{\Omega }=\frac{\left( 1-\eta \right) \bar{g}^{2}}{\alpha ^{2}BE}%
\left\{ [\left( 1-\tilde{\eta}\right) \alpha ^{2}B-\left( \tilde{\eta}-\eta
\right) \tilde{F}_{\eta \tilde{\eta}}^{2}][\alpha ^{6}B^{3}+\left( \tilde{%
\eta}-\eta \right) ^{2}||\mathbf{G}^{T}||_{h}^{2}\tilde{F}_{\eta \tilde{\eta}%
}^{6}]\right. \\ \\
\text{ \ \ \ \ }\left. -\left( \tilde{\eta}-\eta \right) ^{2}\alpha ^{2}\tilde{F}%
_{\eta \tilde{\eta}}^{5}(\bar{g}\beta B+||\mathbf{G}^{T}||_{h}^{2}A\tilde{F}%
_{\eta \tilde{\eta}})\right\} , \\ 
~ \\ 
\mathit{\Pi }=\frac{\left( 1-\eta \right) \bar{g}^{3}}{2\alpha ^{3}BE}%
\left\{ [\left( 1-\tilde{\eta}\right) \alpha ^{2}B-\left( \tilde{\eta}-\eta
\right) \tilde{F}_{\eta \tilde{\eta}}^{2}][2\alpha ^{6}AB^{2}-\left( \tilde{%
\eta}-\eta \right) ^{2}\bar{g}\beta \tilde{F}_{\eta \tilde{\eta}}^{5}]\right.
\\ \\
\text{ \ \ \ \ }\left. +\left( \tilde{\eta}-\eta \right) ^{2}\alpha ^{2}B\tilde{F%
}_{\eta \tilde{\eta}}^{4}[2\alpha ^{2}+\left( 1-\eta \right) \bar{g}\beta 
\tilde{F}_{\eta \tilde{\eta}}]\right\} \tilde{F}_{\eta \tilde{\eta}},%
\end{array}
\label{terms}
\end{equation}%
with 
\begin{equation}
\begin{array}{l}
A=-\frac{1}{\alpha ^{2}}\{\left( 1-\eta \right) \left[ 1-\left( 2-\eta -%
\tilde{\eta}\right) \left( 1-\tilde{\eta}\right) ||\mathbf{G}^{T}||_{h}^{2}%
\right] \tilde{F}_{\eta \tilde{\eta}}^{2}-(2-\eta -\tilde{\eta})^{2}\bar{g}%
\beta \tilde{F}_{\eta \tilde{\eta}}-(2-\eta -\tilde{\eta})\alpha ^{2}\}, \\ 
~ \\ 
B=-\frac{1}{\alpha ^{2}}\{[1-2(1-\eta )(1-\tilde{\eta})||\mathbf{G}%
^{T}||_{h}^{2}]\tilde{F}_{\eta \tilde{\eta}}^{2}-2(2-\eta -\tilde{\eta})\bar{%
g}\beta \tilde{F}_{\eta \tilde{\eta}}-2\alpha ^{2}\}, \\ 
~ \\ 
C=\frac{1}{\alpha \tilde{F}_{\eta \tilde{\eta}}}\left( \alpha ^{2}B+\bar{g}\beta A\tilde{%
F}_{\eta \tilde{\eta}}\right) ,\text{ \ } \\ 
~ \\ 
E=\alpha ^{6}BC^{2}+(||\mathbf{G}^{T}||_{h}^{2}\alpha ^{2}-\bar{g}^{2}\beta
^{2})[\alpha ^{4}A^{2}B+(\eta -\tilde{\eta})^{2}\tilde{F}_{\eta \tilde{\eta}%
}^{4}].%
\end{array}
\label{ABC}
\end{equation}
\end{lemma}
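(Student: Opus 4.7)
The strategy is to apply \cref{Prop2} directly, using that \cref{PropXX} identifies $\tilde{F}_{\eta \tilde{\eta}} = \alpha\tilde{\phi}_{\eta\tilde{\eta}}(b^{2},s)$ as a general $(\alpha,\beta)$-metric, and then to simplify the resulting expressions by exploiting the derivative identities derived in \cref{Lema4,Lema45,Lema5}. First, I would record that because $\mathbf{G}^{T}=-\bar{g}\omega^{\sharp}$ is a (scaled) gradient vector field, the $1$-form $\beta=b_{i}y^{i}$ with $b_{i}=-\frac{1}{\bar{g}}w_{i}$ is closed; hence $s_{ij}=0$ and consequently $s_{i}=s^{i}=s_{0}=s_{0}^{i}=0$. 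This collapses the general spray formula in \cref{Prop2} to
\begin{equation*}
\tilde{\mathcal{G}}_{\eta\tilde{\eta}}^{i}=\mathcal{G}_{\alpha}^{i}+\bigl[\Theta(r_{00}+2\alpha^{2}Rr)+\alpha\Omega r_{0}\bigr]\frac{y^{i}}{\alpha}+\bigl[\Psi(r_{00}+2\alpha^{2}Rr)+\alpha\Pi r_{0}\bigr]b^{i}-\alpha^{2}Rr^{i},
\end{equation*}
and the substitution $b^{i}=-w^{i}/\bar{g}$ produces the negative sign in the middle term of the target formula \eqref{SPRAY}. The quantities $r_{00},r_{0},r,r^{i}$ in \eqref{terms} are then obtained immediately from \eqref{rs} via $r_{ij}=b_{i|j}=-\frac{1}{\bar{g}}w_{i|j}$.

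Next, I would compute $Q,R,\Theta,\Psi,\Omega,\Pi$ by substituting into the formulas of \cref{Prop2} the explicit derivatives of $\tilde{\phi}_{\eta\tilde{\eta}}$ established in the previous lemmas. Specifically, \cref{Lema4} gives $\tilde{\phi}_{\eta\tilde{\eta}2}=\bar{g}A\tilde{\phi}_{\eta\tilde{\eta}}/C$ and $\tilde{\phi}_{\eta\tilde{\eta}}-s\tilde{\phi}_{\eta\tilde{\eta}2}=B/C$, which immediately yield $Q=\bar{g}A\tilde{\phi}_{\eta\tilde{\eta}}/B$ and $R=(1-\eta)\bar{g}^{2}\bigl[(1-\tilde{\eta})B-(\tilde{\eta}-\eta)\tilde{\phi}_{\eta\tilde{\eta}}^{2}\bigr]\tilde{\phi}_{\eta\tilde{\eta}}^{2}/(2BC)$ after using the identity $\tilde{\phi}_{\eta\tilde{\eta}1}$ from \cref{Lema5}. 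The denominator $\phi-s\phi_{2}+(b^{2}-s^{2})\phi_{22}$ appearing in $\Theta,\Psi,\Omega,\Pi$ is computed by plugging in $\tilde{\phi}_{\eta\tilde{\eta}22}$ from \cref{Lema5}: a direct computation shows that it equals $E/(\alpha^{4}BC\tilde{\phi}_{\eta\tilde{\eta}}^{3})$ with the specific $E$ stated in \eqref{ABC}. With this identification, each of $\Theta,\Psi,\Omega,\Pi$ reduces to the claimed expressions after clearing denominators.

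The final bookkeeping step is to convert the $b^{2},s$-variables used in \cref{Lema4,Lema5} into the $\alpha,\beta,\tilde{F}_{\eta\tilde{\eta}}$-variables appearing in \eqref{ABC}. Using $s=\beta/\alpha$, $b^{2}=\|\mathbf{G}^{T}\|_{h}^{2}/\bar{g}^{2}$ and $\tilde{\phi}_{\eta\tilde{\eta}}=\tilde{F}_{\eta\tilde{\eta}}/\alpha$, the quantities $A,B$ in \eqref{3.2} are rescaled (by $-1/\alpha^{2}$ up to multiplication by $\tilde{F}_{\eta\tilde{\eta}}^{2}$) to those in \eqref{ABC}, and $C$ acquires its $\tilde{F}_{\eta\tilde{\eta}}$-dependence correctly. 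The third identity $C\tilde{\phi}_{\eta\tilde{\eta}}=B+\bar{g}sA\tilde{\phi}_{\eta\tilde{\eta}}$ from \cref{Lema4} verifies the definition of $C$ in \eqref{ABC}.

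The main obstacle will be bookkeeping rather than conceptual: the formulas for $\Omega$ and $\Pi$ in \cref{Prop2} contain $\phi_{1},\phi_{12}$ mixed derivatives, and combining the expression $\tilde{\phi}_{\eta\tilde{\eta}12}$ from \cref{Lema5} (itself a sum of four terms) with the denominator $E$ without losing or introducing spurious factors of $\tilde{\phi}_{\eta\tilde{\eta}},\alpha,C$ is delicate. I would handle it by writing $\Pi=\bigl[(\phi-s\phi_{2})\phi_{12}-s\phi_{1}\phi_{22}\bigr]/\bigl[(\phi-s\phi_{2})\cdot(\text{denom})\bigr]$ as a single rational expression in $A,B,C,\tilde{\phi}_{\eta\tilde{\eta}}$, then using the polynomial identity \eqref{3.3} to eliminate quartic terms in $\tilde{\phi}_{\eta\tilde{\eta}}$ whenever they arise, and finally re-expressing the outcome via the rescaling above. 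The same procedure applied to $\Omega=2\phi_{1}/\phi-[s\phi+(b^{2}-s^{2})\phi_{2}]\Pi/\phi$ yields the last formula in \eqref{terms}, completing the proof.
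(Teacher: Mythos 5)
Your proposal follows essentially the same route as the paper: since $\beta$ comes from the scaled gradient of $p$ it is closed, so $s_{ij}=s_{i}=s^{i}=s_{0}=s_{0}^{i}=0$ and the $r$-quantities reduce to expressions in $w_{i|j}$, after which the derivative identities of \cref{Lema4,Lema45,Lema5} are substituted into \cref{Prop2} and the terms are collected. The paper does exactly this, precomputing the combinations $s\tilde{\phi}_{\eta\tilde{\eta}}+(b^{2}-s^{2})\tilde{\phi}_{\eta\tilde{\eta}2}$, $(\tilde{\phi}_{\eta\tilde{\eta}}-s\tilde{\phi}_{\eta\tilde{\eta}2})\tilde{\phi}_{\eta\tilde{\eta}2}-s\tilde{\phi}_{\eta\tilde{\eta}}\tilde{\phi}_{\eta\tilde{\eta}22}$, $\tilde{\phi}_{\eta\tilde{\eta}}-s\tilde{\phi}_{\eta\tilde{\eta}2}+(b^{2}-s^{2})\tilde{\phi}_{\eta\tilde{\eta}22}$ and $(\tilde{\phi}_{\eta\tilde{\eta}}-s\tilde{\phi}_{\eta\tilde{\eta}2})\tilde{\phi}_{\eta\tilde{\eta}12}-s\tilde{\phi}_{\eta\tilde{\eta}1}\tilde{\phi}_{\eta\tilde{\eta}22}$ before invoking \cref{Prop2}. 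Two of your intermediate identifications carry wrong factors, though they do not affect the strategy: one finds $\tilde{\phi}_{\eta\tilde{\eta}}-s\tilde{\phi}_{\eta\tilde{\eta}2}+(b^{2}-s^{2})\tilde{\phi}_{\eta\tilde{\eta}22}=E/(\alpha^{6}C^{3})$, not $E/(\alpha^{4}BC\tilde{\phi}_{\eta\tilde{\eta}}^{3})$, and in $R=\phi_{1}/(\phi-s\phi_{2})$ the factors of $C$ cancel, giving $R=\tfrac{(1-\eta)\bar{g}^{2}}{2B}[(1-\tilde{\eta})B-(\tilde{\eta}-\eta)\tilde{\phi}_{\eta\tilde{\eta}}^{2}]\tilde{\phi}_{\eta\tilde{\eta}}^{2}$ with no $C$ in the denominator, which agrees with \eqref{terms} after the rescaling $\tilde{F}_{\eta\tilde{\eta}}=\alpha\tilde{\phi}_{\eta\tilde{\eta}}$, $\beta=\alpha s$. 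With these corrections your computation goes through exactly as in the paper.
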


\begin{proof}
Having the derivatives $\tilde{\phi}_{\eta \tilde{\eta}1},$ $\tilde{\phi}%
_{\eta \tilde{\eta}2},$ $\tilde{\phi}_{\eta \tilde{\eta}12}$ and $\tilde{\phi%
}_{\eta \tilde{\eta}22}$ given by equations \eqref{333} and \eqref{RRR}, a simple
computation shows that%
\begin{equation}
\begin{array}{l}
s\tilde{\phi}_{\eta \tilde{\eta}}+(b^{2}-s^{2})\tilde{\phi}_{\eta \tilde{\eta%
}2}=\frac{1}{\bar{g}C}(\bar{g}sB+||\mathbf{G}^{T}||_{h}^{2}A\tilde{\phi}%
_{\eta \tilde{\eta}}), \\ 
~ \\ 
(\tilde{\phi}_{\eta \tilde{\eta}}-s\tilde{\phi}_{\eta \tilde{\eta}2})\tilde{%
\phi}_{\eta \tilde{\eta}2}-s\tilde{\phi}_{\eta \tilde{\eta}}\tilde{\phi}%
_{\eta \tilde{\eta}22}=\frac{\bar{g}}{C^{3}}[AB^{2}-(\tilde{\eta}-\eta )^{2}%
\bar{g}s\tilde{\phi}_{\eta \tilde{\eta}}^{5}], \\ 
~ \\ 
\tilde{\phi}_{\eta \tilde{\eta}}-s\tilde{\phi}_{\eta \tilde{\eta}%
2}+(b^{2}-s^{2})\tilde{\phi}_{\eta \tilde{\eta}22}=\frac{1}{C^{3}}%
\{BC^{2}+(||\mathbf{G}^{T}||_{h}^{2}-\bar{g}^{2}s^{2})[A^{2}B+(\tilde{\eta}%
-\eta )^{2}\tilde{\phi}_{\eta \tilde{\eta}}^{4}]\}, \\ 
~ \\ 
(\tilde{\phi}_{\eta \tilde{\eta}}-s\tilde{\phi}_{\eta \tilde{\eta}2})\tilde{%
\phi}_{\eta \tilde{\eta}12}-s\tilde{\phi}_{\eta \tilde{\eta}1}\tilde{\phi}%
_{\eta \tilde{\eta}22}=\frac{\left( 1-\eta \right) \bar{g}^{3}}{2C^{4}}%
\{[\left( 1-\tilde{\eta}\right) B-(\tilde{\eta}-\eta )\tilde{\phi}_{\eta 
\tilde{\eta}}^{2}][2AB^{2}-(\tilde{\eta}-\eta )^{2}\bar{g}s\tilde{\phi}%
_{\eta \tilde{\eta}}^{5}] \\ \\
\text{ \ \ \ \ \ \ \ \ \ \ \ \ \ \ \ \ \ \ \ \ \ \ \ \ \ \ \ \ \ \ \ \ \ \ \
\ \ \ \ \ \ \ \ \ \ \ }+(\tilde{\eta}-\eta )^{2}[2+(1-\eta )\bar{g}s%
\tilde{\phi}_{\eta \tilde{\eta}}]B\tilde{\phi}_{\eta \tilde{\eta}}^{4}\}%
\tilde{\phi}_{\eta \tilde{\eta}}.%
\end{array}
\label{RRRR}
\end{equation}%
Denoting by $w^{i}$ the components of $\mathbf{G}^{T}=-\bar{g}h^{ji}\frac{%
\partial p}{\partial x^{j}}\frac{\partial }{\partial x^{i}}$ and using
the notation $w_{i}=h_{ij}w^{j}$, it follows that $w_{i}=-\bar{g}\frac{%
\partial p}{\partial x^{i}}$ and $\frac{\partial w_{i}}{\partial x^{j}}=%
\frac{\partial w_{j}}{\partial x^{i}}$. Moreover, according to \cite[Lemma 4.3%
]{slippery} we have $s_{ij}=s_{i}=s^{i}=s_{0}^{i}=s_{0}=0$ as well as the
following relations%
\begin{equation}
r_{ij}=-\frac{1}{\bar{g}}w_{i|j},\qquad r_{i}=\frac{1}{\bar{g}^{2}}%
w_{i|j}w^{j},\qquad r^{i}=\frac{1}{\bar{g}^{2}}w_{\text{ }%
|j}^{i}w^{j},\qquad r=-\frac{1}{\bar{g}^{3}}w_{i|j}w^{i}w^{j},  \label{RR}
\end{equation}%
where $w_{i|j}=\frac{\partial w_{i}}{\partial x^{j}}-\Gamma _{ij}^{k}w_{k},$ 
$w_{\text{ }|j}^{i}=h^{ik}w_{k|j}$ and $\Gamma _{ij}^{k}=\frac{1}{2}%
h^{km}\left( \frac{\partial h_{jm}}{\partial x^{i}}+\frac{\partial h_{im}}{%
\partial x^{j}}-\frac{\partial h_{ij}}{\partial x^{m}}\right) $. Collecting
the findings \eqref{RRRR} and \eqref{RR}, one can apply \cref{Prop2} and
thus, our claim yields at once. \end{proof}

We notice that a simplified form of the spray coefficients $\tilde{\mathcal{%
G}}_{\eta \tilde{\eta}}^{i}(x,y)$ occurs when $||\mathbf{G}^{T}||_{h}$ is
constant. Indeed, making use  of \cite[Lemma 4.3]{slippery} again, since $%
\frac{\partial ||\mathbf{G}^{T}||_{h}}{\partial x^{i}}=\frac{2}{\bar{g}^{2}}%
w_{i|j}w^{j}=2r_{i},$ we clearly have that $r_{i}=0$ if and only if $||%
\mathbf{G}^{T}||_{h}$ is constant and furthermore, the statement $r_{i}=0$
implies $r^{i}=r=r_{0}=0$. All these particularities reduce the formula %
\eqref{SPRAY} to 
\begin{equation}
\tilde{\mathcal{G}}_{\eta \tilde{\eta}}^{i}(x,y)=\mathcal{G}_{\alpha
}^{i}(x,y)+r_{00}\left( \Theta \frac{y^{i}}{\alpha }-\Psi \frac{w^{i}}{\bar{g%
}}\right) .
\end{equation}

It remains only to catch the ODE system which provides the time-minimizing 
trajectories $\gamma (t)=(\gamma ^{i}(t)),$ $i=1,...,n$ on the slippery
slope under the influence of the active wind $\mathbf{G}_{\eta \tilde{\eta}%
}.$ Namely, if we substitute the spray coefficients $\tilde{\mathcal{G}}_{\eta \tilde{\eta}}^{i}(\gamma (t),\dot{\gamma}(t))$ from \eqref{SPRAY} into \eqref{geo1}, with $\tilde{F}_{\eta \tilde{\eta}}(\gamma (t),\dot{\gamma}(t))=1$, it turns out the system \eqref{GGG}. This ends the proof of  \cref{Theorem2}.

\begin{remark}
Once we have the spray coefficients $\tilde{\mathcal{G}}_{\eta \tilde{\eta}%
}^{i}(x,y)$ provided by \cref{Prop5}, we can immediately supply the
equations of time geodesics which correspond to $\tilde{F}_{\eta \tilde{%
\eta}}.$ It is worthwhile to mention that the condition $C=\frac{1}{\alpha 
\tilde{F}_{\eta \tilde{\eta}}}\left( \alpha ^{2}B+\bar{g}\beta A\tilde{F}%
_{\eta \tilde{\eta}}\right) $ included in \cref{Prop5} is equivalent to %
\eqref{MAMA_4} with $\tilde{F}$ substituted by $\tilde{F}_{\eta \tilde{\eta}}
$. In particular, under restriction $\tilde{F}_{\eta \tilde{\eta}}=1$, the
condition $\tilde{C}=\frac{1}{\alpha }\left( \alpha ^{2}\tilde{B}+\bar{g}%
\beta \tilde{A}\right) $ included in \eqref{geo_tilde} is equivalent to 
\begin{equation}
\begin{array}{c}
(1-\eta )^{2}||\mathbf{G}^{T}||_{h}^{2}[1-\left( 1-\tilde{\eta}\right) ^{2}||%
\mathbf{G}^{T}||_{h}^{2}]+2(1-\eta )\left[ 1-\left( 2-\eta -\tilde{\eta}%
\right) \left( 1-\tilde{\eta}\right) ||\mathbf{G}^{T}||_{h}^{2}\right] \bar{g%
}\beta  \\ 
~ \\ 
+\{\left[ 1-2(1-\eta )\left( 1-\tilde{\eta}\right) ||\mathbf{G}^{T}||_{h}^{2}%
\right] \alpha ^{2}-\left( 2-\eta -\tilde{\eta}\right) ^{2}\bar{g}^{2}\beta
^{2}\}-2\left( 2-\eta -\tilde{\eta}\right) \bar{g}\alpha ^{2}\beta -\alpha
^{4}=0,%
\end{array}
\label{INDIC}
\end{equation}%
for any\textbf{\ }$(\eta ,\tilde{\eta})\in \mathcal{\tilde{S}},$ which
defines the indicatrix $I_{\tilde{F}_{\eta \tilde{\eta}}}.$
\end{remark}

Finally, we mention some potential generalizations of the navigation
problems $\mathcal{P}_{\eta ,\tilde{\eta}}$ which refer to a non-uniform
slippery slope, i.e. either only one or both traction coefficients
 could depend on the position $%
x\in M$, namely $\eta =\eta (x),$ $\tilde{\eta}=\tilde{\eta}(x)\in \lbrack
0,1]$. Varying one or both traction coefficients, the resultant metrics will
be more extensive than the general $(\alpha ,\beta )$-metrics, namely, $%
\tilde{F}_{\eta \tilde{\eta}}(x,y)=\alpha \tilde{\phi}_{\eta \tilde{\eta}%
}(||G^{T}||_{h}^{2},s,\eta (x))$ or $\tilde{F}_{\eta \tilde{\eta}%
}(x,y)=\alpha \tilde{\phi}_{\eta \tilde{\eta}}(||G^{T}||_{h}^{2},s,\tilde{%
\eta}(x))$ or $\tilde{F}_{\eta \tilde{\eta}}(x,y)=\alpha \tilde{\phi}_{\eta 
\tilde{\eta}}(||G^{T}||_{h}^{2},s,\eta (x),\tilde{\eta}(x))$, because of the
fact that $\tilde{\phi}_{\eta \tilde{\eta}}$ depends in addition on a third
variable or two more variables. Another further extension can occur if
we consider a varying self-speed $\tilde{f}$ of a craft on the slippery slope, i.e. $%
||u||_{h}=\tilde{f}(x)\in (0,1]$. Moreover, both traction coefficients, the self-velocity as well as  the gravitational wind can not only depend on the position, but also vary over time. This would lead to time-dependent Finsler metrics, whose time-minimizing trajectories can be modeled as lightlike geodesics of an associated Lorentz-Finsler metric (see, e.g \cite{JPS2}). 

\section{Examples}

\label{Sec_Examples}

The aim of this section is to provide some examples which support the
applicability of the above obtained results by highlighting the two-dimensional case.
First, we give a brief overview of the general model of the hill slope in dimension 2 which was also described in \cite{slippery,cross,slipperyx}. Then we focus in particular on an inclined plane as well as a triple hill based on the Gaussian bell-shaped surfaces.

Let $(M,h)$ be a surface embedded in $\mathbb{R}^{3}$, representing a model for a slippery slope of a mountain. Since $(M,h)$ is a $2$-dimensional Riemannian manifold, $(x^{1},x^{2})$ denotes a   local coordinate system on a local chart at an arbitrary point $O\in M$. If we parametrize $M$ by $(x^{1},x^{2})\in M \longmapsto (x, y, z)\in \mathbb{R}^{3},$ where  $x=x^{1},y=x^{2},z=f(x^{1},x^{2})$ and $f$ is a smooth function on $M$, then the Riemannian metric $h$ induced on $M$ is defined by $\left(
h_{ij}(x^{1},x^{2})\right) =\left( 
\begin{array}{cc}
1+f_{x^{1}}^{2} & f_{x^{1}}f_{x^{2}} \\ 
f_{x^{1}}f_{x^{2}} & 1+f_{x^{2}}^{2}%
\end{array}%
\right)$, $i,j=1,2.$ We notice that the functions $f_{x^{1}}$ and $%
f_{x^{2}} $ denote the partial derivatives of $f$ with respect to $x^{1}$
and $x^{2}$, respectively. By considering the plane $\pi _{O}$ tangent to $M$
at the point $O\in M$, it is spanned by the vectors $\frac{\partial }{%
\partial x^{1}}=(1,0,f_{x^{1}})$ and $\frac{\partial }{\partial x^{2}}%
=(0,1,f_{x^{2}})$.

Let $\mathbf{G}$ be a gravitational field in $\mathbb{R}^{3}$ which acts on
the mountain slope $M$ perpendicularly to its base, i.e. the horizontal plane $z=0$. Thus, it can be decomposed into two orthogonal components, i.e.  $\mathbf{G}=\mathbf{G}^{T}+\mathbf{G}^{\perp }$, where $\mathbf{G%
}^{T}$ is tangent to $M$ in $O$, pointing in the steepest downhill direction and $\mathbf{G}^{\perp }$ is normal to $M$ in $O$; for clarity's sake, see also \cref{fig_slope_general} in this regard. As the component $\mathbf{G}^{T}$ acts along the negative surface gradient, it stands for the gravitational wind which locally can be written as 
\begin{equation}
\mathbf{G}^{T}=-\frac{\bar{g}}{q+1}(f_{x^{1}},f_{x^{2}},q)=-\frac{\bar{g}}{%
q+1}\left( f_{x^{1}}\frac{\partial }{\partial x^{1}}+f_{x^{2}}\frac{\partial 
}{\partial x^{2}}\right),  
\label{wind}
\end{equation}%
with $||\mathbf{G}^{T}||_{h}=\bar{g}\sqrt{\frac{q}{q+1}},$ where $%
q=f_{x^{1}}^{2}+f_{x^{2}}^{2}.$ We remark that in the critical points of $M$, one has $q=0$ and thus, $\mathbf{G}^{T}$ vanishes.

Similarly to what has been done in \cite{slippery,cross,slipperyx}, having
constructed the rectangular basis $\{e_{1},e_{2}\}$ in the tangent plane $%
\pi _{O}$, where $e_{1}$ has the same direction as $\mathbf{G}^{T}$, the
parametric equations of the indicatrix of the $(\eta ,\tilde{\eta})$-slope
metric $\tilde{F}_{\eta \tilde{\eta}}$ in the coordinates $(X,Y)$, which
correspond to $\{e_{1},e_{2}\}$, can be read as 
\begin{equation}
\left\{ 
\begin{array}{rll}
X & = & [1+(\eta -\tilde{\eta})||\mathbf{G}^{T}||_{h}\cos \theta ]\cos
\theta +(1-\eta )||\mathbf{G}^{T}||_{h} \\ 
~ &  &  \\ 
Y & = & [1+(\eta -\tilde{\eta})||\mathbf{G}^{T}||_{h}\cos \theta ]\sin \theta%
\end{array}%
,\right.  \label{2_indicatrix}
\end{equation}%
for any clockwise direction $\theta \in \lbrack 0,2\pi )$ of the
self-velocity $u$, with $||u||_{h}=1,$ and for any pair $(\eta ,\tilde{\eta})\in 
\mathcal{\tilde{S}}.$ If we eliminate  the
parameter $\theta$ in the system \eqref{2_indicatrix}, then we get an equivalent equation to 
\eqref{2_indicatrix}, namely%
\begin{equation}
\sqrt{\left( X-(1-\eta )||\mathbf{G}^{T}||_{h}\right) ^{2}+Y^{2}}%
=X^{2}+Y^{2}-(2-\eta -\tilde{\eta})X||\mathbf{G}^{T}||_{h}+(1-\eta )(1-%
\tilde{\eta})||\mathbf{G}^{T}||_{h}^{2}.  \label{EE}
\end{equation}%
Moreover, having the basis $\{\frac{\partial }{\partial x^{1}},\frac{%
\partial }{\partial x^{2}}\}$ and $\{e_{1},e_{2}\}$ in $\pi _{O}$, every
tangent vector of $\pi _{O}$ can be expressed as%
\begin{equation*}
y^{1}\frac{\partial }{\partial x^{1}}+y^{2}\frac{\partial }{\partial x^{2}}%
=Xe_{1}+Ye_{2},
\end{equation*}%
with $e_{1}=-\frac{1}{\sqrt{q(q+1)}}(f_{x^{1}},f_{x^{2}},q)$ and $e_{2}=%
\frac{1}{\sqrt{q}}(f_{x^{2}},-f_{x^{1}},0).$ Accordingly, it turns out the
following link between the coordinates $(X,Y)$ and $(y^{1},y^{2})$%
\begin{equation}
X=-\sqrt{\frac{q+1}{q}}\left( y^{1}f_{x^{1}}+y^{2}f_{x^{2}}\right) ,\text{ \
\ \ \ }Y=\frac{1}{\sqrt{q}}\left( y^{1}f_{x^{2}}-y^{2}f_{x^{1}}\right).
\label{EE1}
\end{equation}%
Furthermore, since $y^{1}f_{x^{1}}+y^{2}f_{x^{2}}=-\frac{1}{\bar{g}}h(y,%
\mathbf{G}^{T})$, we obtain 
\begin{equation}
\begin{array}{l}
X^{2}+Y^{2}=(y^{1})^{2}+(y^{2})^{2}+\beta ^{2}=h_{ij}y^{i}y^{j}=\alpha ^{2},
\\ 
~ \\ 
y^{1}f_{x^{1}}+y^{2}f_{x^{2}}=\beta .%
\end{array}
\label{AB}
\end{equation}%
Taking into account the last relations, \eqref{EE} is equivalent to 
\begin{equation}
\sqrt{\alpha ^{2}+2(1-\eta )\bar{g}\beta +(1-\eta )^{2}||\mathbf{G}%
^{T}||_{h}^{2}}=\alpha ^{2}+(2-\eta -\tilde{\eta})\bar{g}\beta +(1-\eta )(1-%
\tilde{\eta})||\mathbf{G}^{T}||_{h}^{2},  \label{III}
\end{equation}%
and by applying Okubo's method, it leads to \eqref{MAMA_general}.

We notice that in the aforementioned study, the $(\eta ,\tilde{\eta})$-slope
metric $\tilde{F}_{\eta \tilde{\eta}}$, which satisfies \eqref{MAMA_general}, is described only at the regular points $O$ of $M$, i.e. where $q(O)\neq 0$. Nevertheless, it is well defined at the critical
points of $M$ as well because it is reduced to the Riemannian metric $h$ in this case.

In the sequel, based on the explicit formulas \eqref{AB} for $\alpha $ and $\beta $ as well as the relations \eqref{S1}~and~\eqref{RR}, by some
computations we get%
\begin{equation}
\begin{array}{rll}
\Gamma _{ij}^{k}&=&\frac{1}{q+1}f_{x^{k}}f_{x^{i}x^{j}},\text{ \ \ \ }\mathcal{%
G}_{\alpha }^{k}(x^{1},x^{2},y^{1},y^{2})=\frac{1}{4}r_{00}f_{x^{k}},\text{
\ }i,j,k=1,2,  \\ 
~ \\ 
r_{00}&=&\frac{1}{q+1}f_{x^{i}x^{j}}y^{i}y^{j}=\frac{1}{q+1}%
[f_{x^{1}x^{1}}(y^{1})^{2}+2f_{x^{1}x^{2}}y^{1}y^{2}+f_{x^{2}x^{2}}(y^{2})^{2}],
\\ 
\\ 
r_{0}&=&\frac{1}{(q+1)^{2}}%
[(f_{x^{1}}f_{x^{1}x^{1}}+f_{x^{2}}f_{x^{1}x^{2}})y^{1}+(f_{x^{1}}f_{x^{1}x^{2}}+f_{x^{2}}f_{x^{2}x^{2}})y^{2}],
\\ 
\\ 
r&=&\frac{1}{(q+1)^{3}}%
(f_{x^{1}}^{2}f_{x^{1}x^{1}}+2f_{x^{1}}f_{x^{2}}f_{x^{1}x^{2}}+f_{x^{2}}^{2}f_{x^{2}x^{2}}),
\\ 
\\ 
r^{1}&=&\frac{1}{(q+1)^{3}}%
[(1+f_{x^{1}}^{2})(f_{x^{1}}f_{x^{1}x^{1}}+f_{x^{2}}f_{x^{1}x^{2}})-f_{x^{1}}f_{x^{2}}(f_{x^{1}}f_{x^{1}x^{2}}+f_{x^{2}}f_{x^{2}x^{2}})],
\\ 
\\ 
r^{2}&=&\frac{1}{(q+1)^{3}}%
[-f_{x^{1}}f_{x^{2}}(f_{x^{1}}f_{x^{1}x^{1}}+f_{x^{2}}f_{x^{1}x^{2}})+(1+f_{x^{2}}^{2})(f_{x^{1}}f_{x^{1}x^{2}}+f_{x^{2}}f_{x^{2}x^{2}})],
\end{array}
\label{ABRR}
\end{equation}%
which are the essential terms in the formulas for the spray coefficients $\tilde{\mathcal{G}}_{\eta \tilde{\eta}}^{i}$ of the metric $\tilde{F}_{\eta \tilde{\eta}}.$

\subsection{Inclined plane}
\label{Sec_6.1}

We start with an inclined plane (a ramp) because this example allows us to show clearly the behavior of the indicatrix of the $(\eta ,\tilde{\eta})$-slope
metrics $\tilde{F}_{\eta \tilde{\eta}},$ for any pair $(\eta ,\tilde{\eta}%
)\in \mathcal{\tilde{S}}$. We consider the planar slope given by $%
z=x/2$, (i.e. $f(x^{1},x^{2})=x/2$, where $x=x^{1},$ $y=x^{2}$), having the
slope angle\footnote{Curiously, the Descending (Ascending) Passage of the Great Pyramid inclines at the slope of about $26.5^{\circ}$ ($26.1^{\circ}$) according to measurements, which corresponds to the rise over run ratio being very close to $0.5$, i.e. $z\approx x/2$.} $26.6^{\circ }$ and taking the regular point $O=(0,0)$ as the center of the indicatrix. In this setting, it turns out that $h=\sqrt{h_{ij}y^{i}y^{j}}$ has $\left(h_{ij}(x^{1},x^{2})\right) =\left( 
\begin{array}{cc}
5/4 & 0 \\ 
0 & 1%
\end{array}
\right), i, j=1, 2$ as well as $q=1/4,$ $\mathbf{G}^{T}=-\frac{2\bar{g}}{5}%
\frac{\partial }{\partial x^{1}}$ and $||\mathbf{G}^{T}||_{h}=\frac{\bar{g}}{%
\sqrt{5}}$. Moreover, it follows that $y^{1}=-2X/\sqrt{5}$ and $y^{2}=-Y$
and \eqref{2_indicatrix} are reduced to 
\begin{equation}
\left\{ 
\begin{array}{rll}
-\frac{\sqrt{5}}{2}y^{1} & = & [1+(\eta -\tilde{\eta})\frac{\bar{g}}{\sqrt{5}%
}\cos \theta ]\cos \theta +(1-\eta )\frac{\bar{g}}{\sqrt{5}} \\ 
~ &  &  \\ 
-y^{2} & = & [1+(\eta -\tilde{\eta})\frac{\bar{g}}{\sqrt{5}}\cos \theta
]\sin \theta%
\end{array}%
,\right.  \label{INCL_IND}
\end{equation}%
for any direction $\theta \in \lbrack 0,2\pi)$ of the velocity $u$. By applying the general theory
presented in the previous sections, the strong convexity condition $||%
\mathbf{G}^{T}||_{h}<\tilde{b}_{0},$ with $\tilde{b}_{0}$ defined in %
\eqref{Strong_C}, which corresponds to the inclined plane is equivalent to $%
\bar{g}<\delta _{1}(\eta ,\tilde{\eta})$, where 
\begin{equation}
\delta _{1}(\eta ,\tilde{\eta})=\left\{ 
\begin{array}{cc}
\frac{\sqrt{5}}{1-\tilde{\eta}}, & \text{if}\quad (\eta ,\tilde{\eta})\in 
\mathcal{D}_{1}\cup \mathcal{D}_{2} \\ 
~ &  \\ 
\frac{\sqrt{5}}{2|\eta -\tilde{\eta}|}, & \text{if}\quad (\eta ,\tilde{\eta}%
)\in \mathcal{D}_{3}\cup \mathcal{D}_{4}%
\end{array}%
.\right.  \label{plane_convexity}
\end{equation}
\begin{figure}[H]
\centering
\includegraphics[width=0.3\textwidth]{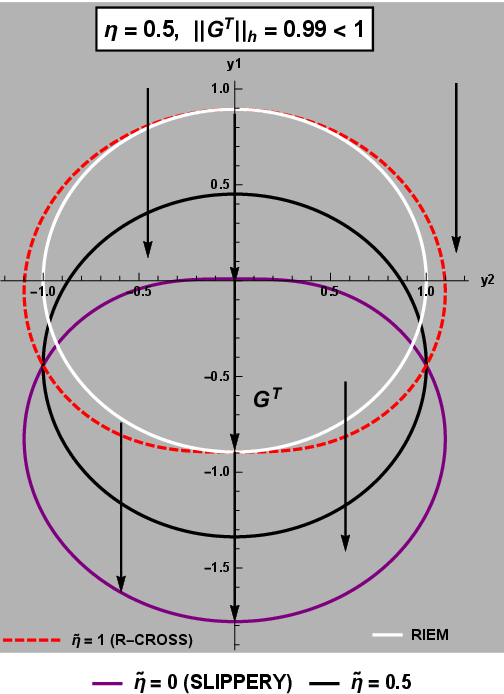}
\includegraphics[width=0.33\textwidth]{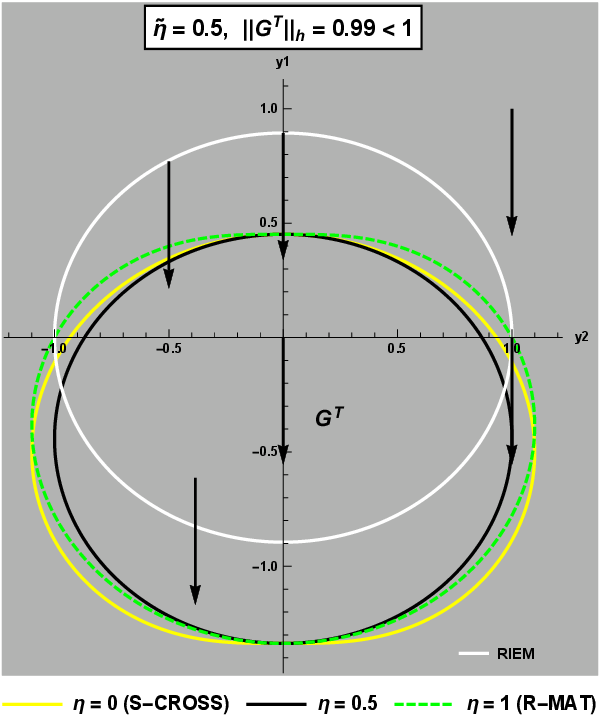}
\includegraphics[width=0.33\textwidth]{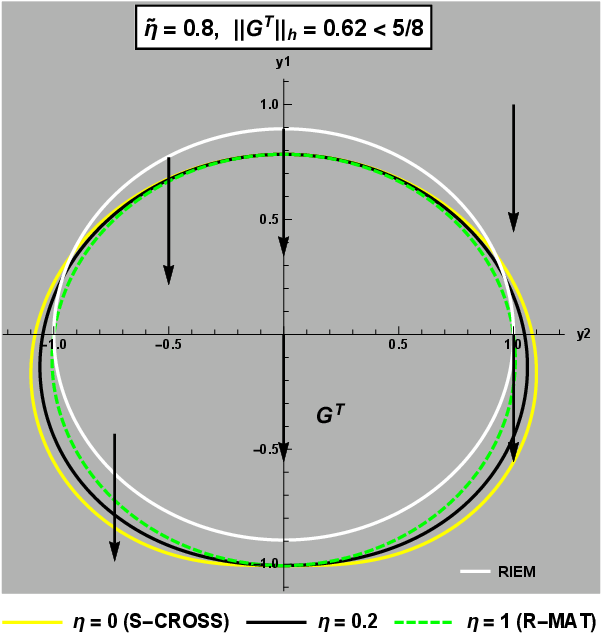}
\caption{The Finslerian indicatrices (the lima\c{c}ons) centered at $(0, 0)$ in the coordinate system $y^1Oy^2$ on the planar slippery $(\eta, \tilde{\eta})$-slope given by $z=x/2$ under almost the strongest allowable gravitational wind $\mathbf{G}^{T}$ (black arrows) in each case and for various combinations of the along- and cross-traction coefficients, i.e. $\eta$ fixed and $\tilde{\eta}$ variable (left) as well as $\tilde{\eta}$ fixed and $\eta$ variable (middle, right);  $t=1$. The steepest downhill direction on the slope is indicated by the negative axis $y^1$. The unperturbed Riemannian indicatrix on the slope is represented by a white ellipse. The supremum of the gravitational wind force $||\mathbf{G}^{T}||_{h}$ due to the strong convexity conditions \eqref{Strong_C} is equal to 1 (left, middle) and 5/8 (right).  
}
\label{fig_plane_indi}
\end{figure}

The indicatrices on the planar slippery $(\eta, \tilde{\eta})$-slope for various combinations of the along- and cross-traction coefficients in the presence of the same wind force are shown 
 in \cref{fig_plane_indi}. The behavior of the indicatrix can be clearly observed while one traction coefficient is fixed and another one is varying. In order to make the differences more visible we took almost the strongest gravitational wind admitted by the conditions for strong convexity, i.e. $||\mathbf{G}^{T}||_{h}=0.99$ (left and middle) and $||\mathbf{G}^{T}||_{h}=0.62$ (right). For comparison, the unperturbed Riemannian indicatrix is presented by a white ellipse. Clearly, all time-minimizing paths ($\tilde{F}_{\eta \tilde{\eta}}$-geodesics) are the Euclidean straight lines in this example because the Finsler metric does not depend on the position on the planar slope.  
We remark that in all presented figures including the indicatrices we applied a bit weaker wind force than its admitted supremum in each most restrictive case so that it is possible to compare all the analyzed indicatrices. 

In what follows, we select five problems determined by the pairs of the fixed (both) traction coefficients, which are shown on the partition of the problem square diagram in \cref{partition2}. For each case $P_1-P_5$ the deformations of the corresponding indicatrices are presented in \cref{fig_plane_indi2}, taking into account the impact of varying gravitational wind forces $||\mathbf{G}^{T}||_{h}$. The Riemannian elliptical $h$-circle, where the influence of gravity vanishes, is marked in white. In particular, observe that, unlike the classical Zermelo (Randers) case\footnote{The strong convexity condition in ZNP reads $\bar{g}<\sqrt{5}$ because $||\mathbf{G}^{T}||_{h}<1$.}, the wind force can exceed $||u||_h=1$, e.g. $||\mathbf{G}^{T}||_{h}<2$ for $(0.5, 0.5)$-slope (top left) or $||\mathbf{G}^{T}||_{h}<1.5$ for $(0.2, 1/3)$-slope (top middle); for clarity, see \eqref{Strong_C} in this regard. All outcomes being the convex lima\c{c}ons are compared on the bottom right in \cref{fig_plane_indi2}.

\begin{figure}[h!]
\centering
\includegraphics[width=0.39\textwidth]{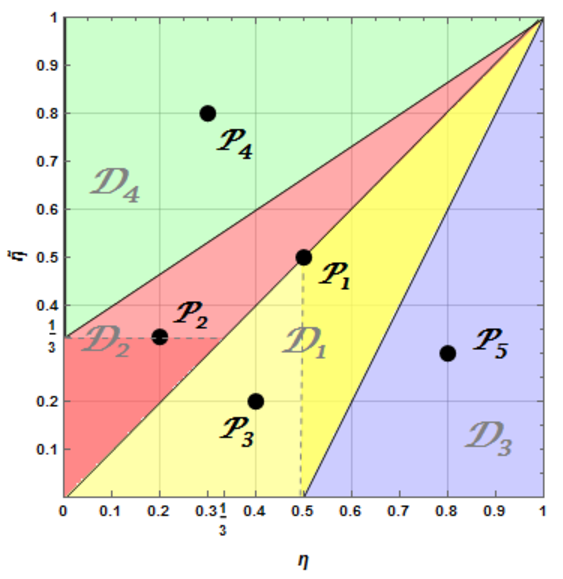}
\caption{The navigation problems related to the $(\eta, \tilde{\eta})$-slopes, i.e. $\mathcal{P}_1=(0.5, 0.5)$, $\mathcal{P}_2=(0.2, 1/3)$, $\mathcal{P}_3=(0.4, 0.2)$, $\mathcal{P}_4=(0.3, 0.8)$, $\mathcal{P}_5=(0.8, 0.3)$, selected from different subareas of the problem square diagram $\mathcal{\tilde{S}}$ (see also \cref{fig_square2}), for which the behavior of the corresponding indicatrices under the action of gravitational wind of the varying force are next compared in \cref{fig_plane_indi2}. In the background, the color-coded partition of $\mathcal{\tilde{S}}$ as in \cref{partition} (left).}
\label{partition2}
\end{figure}

Notice that in general we can arrive at the Riemannian case twofold. First, no matter the gravitational wind force, while both parameters are equal to 1. This means that there is the greatest compensation of the gravitational wind due to traction(s) (i.e. the only non-slippery
slope in our model), so the dead wind is maximal. Second, no matter the traction, while $\mathbf{G}^{T}=\overrightarrow{\mathbf{0}}$. This particular scenario can occur if $\bar{g}=0$ (no gravity), since the slope is not horizontal, so its gradient is not vanished.

\begin{figure}[h!]
\centering
\includegraphics[width=0.316\textwidth]{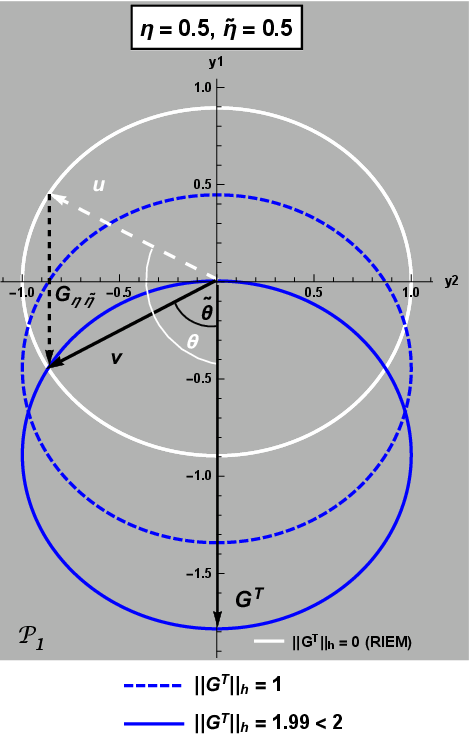} 
\includegraphics[width=0.32\textwidth]{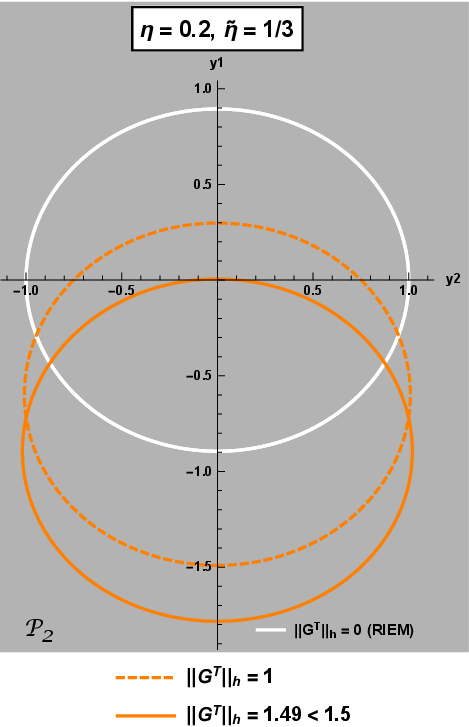} 
\includegraphics[width=0.323\textwidth]{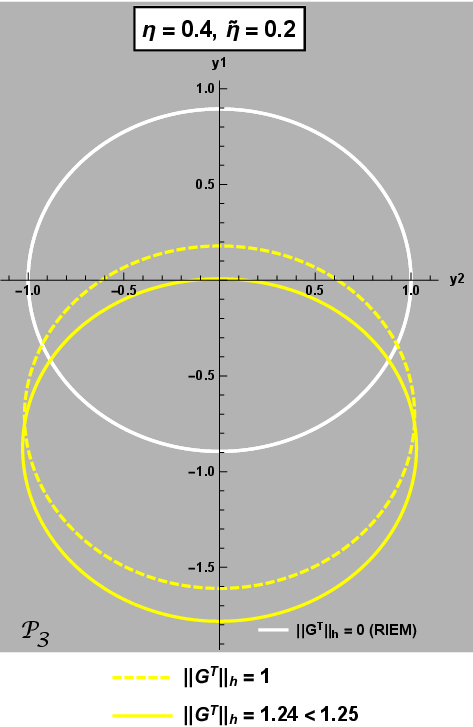}
\\~\\
 ~\includegraphics[width=0.344\textwidth]{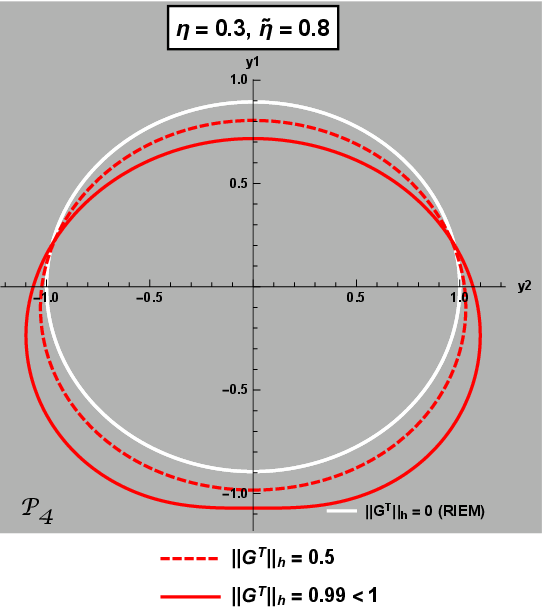} 
\includegraphics[width=0.295\textwidth]{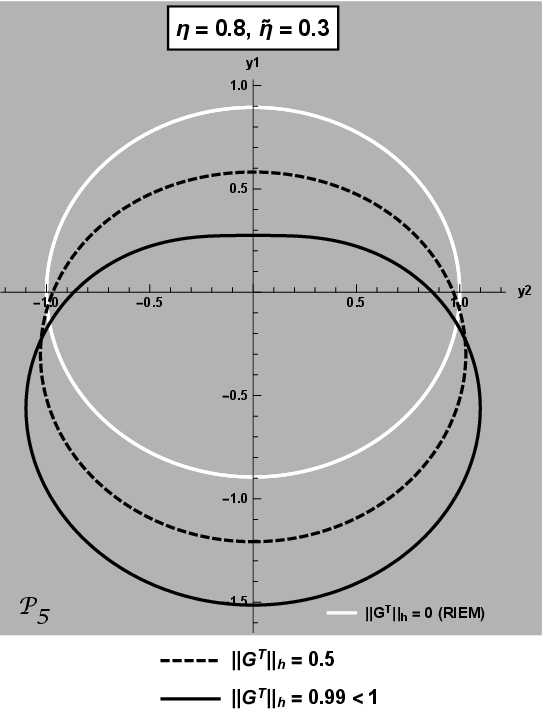} 
\includegraphics[width=0.329\textwidth]{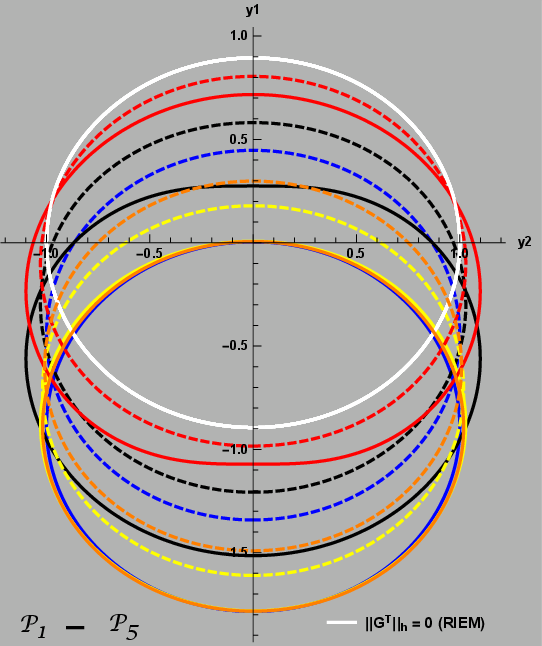}
\caption{The evolution of the Finslerian indicatrices centered at the origin of the coordinate system $y^1Oy^2$ on the planar slippery $(\eta, \tilde{\eta})$-slope given by $z=x/2$, under the action of the gravitational wind of various forces, where the traction coefficients correspond to the problems  $P_1-P_5$ from \cref{partition2}; $t=1$. All the outcomes are compared with each other on the bottom right.  The steepest downhill direction on the slope is indicated by the negative axis $y^1$. The wind force $||\mathbf{G}^{T}||_{h}$ related to the solid colors represent approximately the strongest gravitational wind admitted by the restrictions on strong convexity in each case being considered. On the other end, the elliptical Riemannian indicatrix ($h$-circle) is marked in white in each subfigure.}
\label{fig_plane_indi2}
\end{figure}
 
Finally, we compare all specific types of the Finslerian indicatrices considered in our study in \cref{fig_plane_indirange} (left), i.e. ZNP, MAT, CROSS, RIEM, SLIPPERY, S-CROSS and three new cases coming from the interior of the problem square diagram $\mathcal{\tilde{S}}$, i.e. $(0.25, 0.75)-, (0.2, 0.5)-$ and $(0.5, 0.2)-$slope. The force of the gravitational wind blowing on the planar slope under consideration equals 0.49, which is due to the conditions for strong convexity in the most stringent cases, i.e. MAT and CROSS, where $||\mathbf{G}^{T}||_{h}<0.5$. Therefore, $\bar{g}<\sqrt{5}/2\approx1.118$, since $||\mathbf{G}^{T}||_{h}=\bar{g}/\sqrt{5}$ for the ramp $z=x/2$. Interestingly, the maximum range of an arbitrary $(\eta, \tilde{\eta})$-indicatrix in any direction is created by MAT and CROSS as well as the minimum range by ZNP and RIEM. Namely, all $(\eta, \tilde{\eta})$-indicatrices are located in between those boundaries; for the sake of clarity, see \cref{fig_plane_indirange} (right) in this regard. 

\begin{figure}[h!]
\centering
\includegraphics[width=0.472\textwidth]{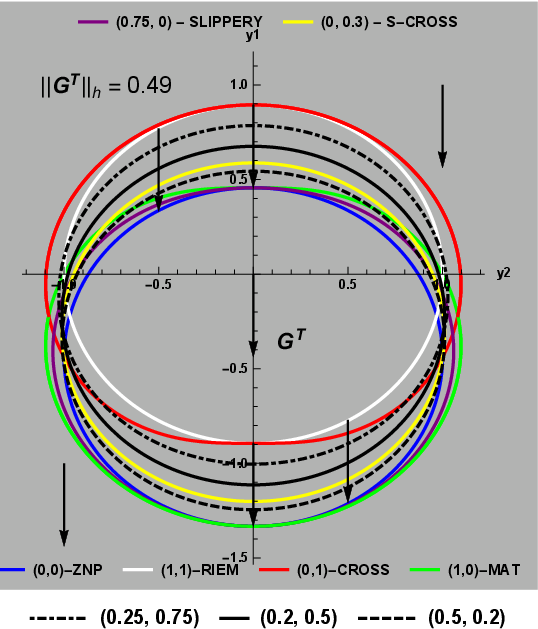}
~\includegraphics[width=0.492\textwidth]{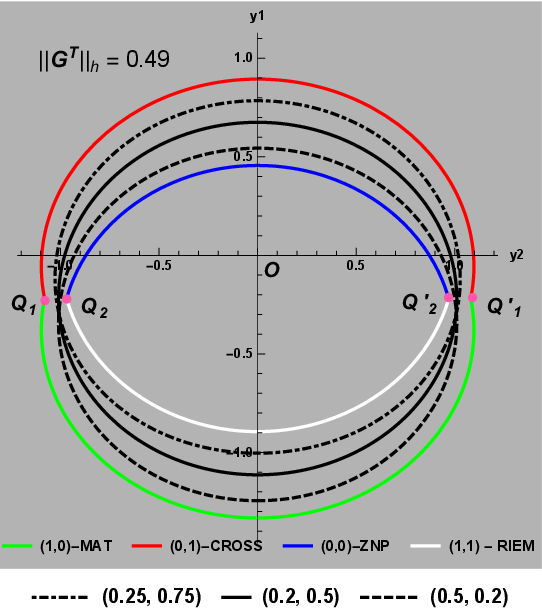}
\caption{Left: A comparison of all specific types of the Finslerian indicatrices (the color-coded lima\c{c}ons) centered at the origin of the coordinate system $y^1Oy^2$ on the planar $(\eta, \tilde{\eta})$-slope given by  $z=x/2$, under the action of the gravitational wind (indicated by black arrows) of constant force $||\mathbf{G}^{T}||_{h}=0.49$; $t=1$. The steepest downhill direction is indicated by the negative axis $y^1$. 
Right (the set-up as on the left): All $(\eta, \tilde{\eta})$-indicatrices (black) are located between the boundaries consisting  of MAT (the lower part, green) and CROSS (the upper part, red), i.e. the maximum (outer) range, as well as ZNP (the upper part, blue) and RIEM (the lower part, white), i.e. the minimum (inner) range. The MAT and CROSS indicatrices intersect each other in the points $Q_1$ and $Q_1'$, which correspond to the directions of the self-velocity $u$: $\theta_{MAT}\in\{77.2^{\circ}, 282.8^{\circ}\}$ and $\theta_{CROSS}\in\{102.8^{\circ}, 257.2^{\circ}\}$ or, equivalently, the directions of the resultant velocity $v_{\eta\tilde{\eta}}$: $\tilde{\theta}\in\{77.2^{\circ}, 282.8^{\circ}\}$, respectively, and $||v_{\eta\tilde{\eta}}||_h\approx1.108$, where $||u||_h=1$. The ZNP and RIEM indicatrices intersect each other in the points $Q_2$ and $Q_2'$, which correspond to the directions of the self-velocity $u$: $\theta_{RIEM}\in\{75.8^{\circ}, 284.2^{\circ}\}$ and $\theta_{ZNP}\in\{104.2^{\circ}, 255.8^{\circ}\}$  or, equivalently, the directions of the resultant velocity $v_{\eta\tilde{\eta}}$: $\tilde{\theta}\in\{75.8^{\circ}, 284.2^{\circ}\}$, respectively, and $||v_{\eta\tilde{\eta}}||_h=1$.}
\label{fig_plane_indirange}
\end{figure}

\subsection{Triple hill}

As the second example we consider a triple Gaussian bell-shaped hill $\mathfrak{G}$ given by
the function%
\begin{equation*}
z=f(x_{1},x_{2})=\frac{1}{4}\sum_{k=1}^{3}(k+1)e^{-\rho _{k}}=\frac{1}{2}%
e^{-\rho _{1}}+\frac{3}{4}e^{-\rho _{2}}+e^{-\rho _{3}},
\end{equation*}%
where for simplicity, we used $x_{1}$ and $x_{2}$ instead of $x^{1}$ and $%
x^{2},$ respectively, and $\rho _{k}=\rho _{k}(x_{1},x_{2})$, $k=1,2,3,$ with 
\begin{equation*}
\rho _{1}=(x_{1}-1)^{2}+(x_{2}+1)^{2},\text{ \ \ }\rho
_{2}=(x_{1}+1)^{2}+(x_{2}+1)^{2},\text{ \ \ }\rho
_{3}=x_{1}^{2}+(x_{2}-1)^{2}.
\end{equation*}
A straightforward computation shows that%
\begin{equation*}
\begin{array}{rll}
f_{x_{1}}&=&-(x_{1}-1)e^{-\rho _{1}}-\frac{3}{2}(x_{1}+1)e^{-\rho
_{2}}-2x_{1}e^{-\rho _{3}}, \\ 
~ \\ 
f_{x_{2}}&=&-(x_{2}+1)e^{-\rho _{1}}-\frac{3}{2}(x_{2}+1)e^{-\rho
_{2}}-2(x_{2}-1)e^{-\rho _{3}}, \\ 
\\ 
f_{x_{1}x_{1}}&=&[2(x_{1}-1)^{2}-1]e^{-\rho _{1}}+\frac{3}{2}%
[2(x_{1}+1)^{2}-1]e^{-\rho _{2}}+2(2x_{1}^{2}-1)e^{-\rho _{3}}, \\ 
\\ 
f_{x_{2}x_{2}}&=&[2(x_{2}+1)^{2}-1]e^{-\rho _{1}}+\frac{3}{2}%
[2(x_{2}+1)^{2}-1]e^{-\rho _{2}}+2[2(x_{2}-1)^{2}-1]e^{-\rho _{3}}, \\ 
\\ 
f_{x_{1}x_{2}}&=&2(x_{1}-1)(x_{2}+1)e^{-\rho _{1}}-3(x_{1}+1)(x_{2}+1)e^{-\rho
_{2}}-4x_{1}(x_{2}-1)e^{-\rho _{3}},%
\end{array}%
\end{equation*}%
the functions $f_{x_{1}x_{1}}$, $f_{x_{2}x_{2}}$ and $f_{x_{1}x_{2}}$
denoting the partial derivatives of $f_{x_{1}}$ and $f_{x_{2}}$ with respect
to $x_{1}$ and $x_{2}$, respectively.

According to \eqref{wind} the gravitational wind acting on $\mathfrak{G}$
is 
\begin{equation}
\mathbf{G}^{T}=-\frac{\bar{g}}{q+1}\left( f_{x_{1}}\frac{\partial }{\partial
x_{1}}+f_{x_{2}}\frac{\partial }{\partial x_{2}}\right) ,  \label{GGGG}
\end{equation}%
with $||\mathbf{G}^{T}||_{h}=\bar{g}\sqrt{\frac{q}{q+1}},$ where%
\begin{eqnarray}
q &=&\frac{1}{4}\sum_{k=1}^{3}(k+1)^{2}\rho _{k}e^{-2\rho _{k}}+\frac{3}{2}%
(\rho _{1}+\rho _{2}-4)e^{-(\rho _{1}+\rho _{2})}  \label{Q} \\
&&+3(\rho _{2}+\rho _{3}-5)e^{-(\rho _{2}+\rho _{3})}+2(\rho _{1}+\rho
_{3}-5)e^{-(\rho _{1}+\rho _{3})}.  \notag
\end{eqnarray}%
Let us denote  the maximum value of the function $\mathcal{A}%
(x_{1},x_{2})=\sqrt{\frac{q}{q+1}}$ by $m$, considering, for example, $x_{1},x_{2}\in
\lbrack -3,3]$ ($m=\underset{x_{1},x_{2}\in \lbrack -3,3]}{\max }\mathcal{A}%
(x_{1},x_{2}))$. Making use of a mathematical software, the approximate value
for $m$ is $0.653$ which is achieved at $(x_{1},x_{2})\approx (0.652,1.272).$
Thus, for $x_{1},x_{2}\in \lbrack -3,3]$, $||\mathbf{G}^{T}||_{h}\leq 
\underset{x_{1},x_{2}\in \lbrack -3,3]}{\max }||\mathbf{G}^{T}||_{h}\approx
0.653\bar{g}$. The rescaled magnitude of the acceleration of gravity $\bar{g}$ needs
handling with greater care to ensure that the geodesics will be indeed
optimal in the sense of time. According to \cref{Theorem1}, the indicatrix
of the $(\eta ,\tilde{\eta})$-slope metric $\tilde{F}_{\eta \tilde{\eta}}$
on the entire triple Gaussian bell-shaped hillside $\mathfrak{G}$, with $%
x_{1},x_{2}\in \lbrack -3,3]$, is strongly convex for any $(\eta ,\tilde{\eta%
})\in \mathcal{S}$ if and only if $\bar{g}<\frac{1}{2\cdot \approx0.653}\approx 0.766$.
Nevertheless, by using the general condition $||\mathbf{G}^{T}||_{h}<\tilde{b%
}_{0},$ where $\tilde{b}_{0}$ is defined in \eqref{Strong_C}, it is immediate
to verify the following results.

\begin{lemma}
\label{lemma_gauss} The indicatrix of the $(\eta ,\tilde{\eta})$-slope
metric $\tilde{F}_{\eta \tilde{\eta}}$ is strongly convex on the entire
surface $\mathfrak{G}$, with $x_{1},x_{2}\in \lbrack -3,3]$, if and only if $%
\ \bar{g}<\ \delta _{2}(\eta ,\tilde{\eta}),$ where%
\begin{equation*}
\ \delta _{2}(\eta ,\tilde{\eta})=\left\{ 
\begin{array}{cc}
\frac{1}{m(1-\tilde{\eta})}, & \text{if}\quad (\eta ,\tilde{\eta})\in 
\mathcal{D}_{1}\cup \mathcal{D}_{2} \\ 
~ &  \\ 
\frac{1}{2m|\eta -\tilde{\eta}|}, & \text{if}\quad (\eta ,\tilde{\eta})\in 
\mathcal{D}_{3}\cup \mathcal{D}_{4}%
\end{array}%
\right.
\end{equation*}%
and $m=$ $\underset{x_{1},x_{2}\in \lbrack -3,3]}{\max }\mathcal{A}%
(x_{1},x_{2}).$
\end{lemma}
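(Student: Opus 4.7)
The plan is to deduce the lemma directly from the strong convexity criterion established in \cref{Theorem1} (or equivalently, \cref{Lema3}), by turning the pointwise condition $\|\mathbf{G}^{T}\|_{h}<\tilde{b}_{0}(\eta,\tilde{\eta})$ into a uniform bound on $\bar{g}$ over the compact coordinate square $[-3,3]^{2}$. Since the indicatrix $I_{\tilde{F}_{\eta\tilde{\eta}}}$ is strongly convex at a given point $(x_{1},x_{2})\in\mathfrak{G}$ if and only if $\|\mathbf{G}^{T}(x_{1},x_{2})\|_{h}<\tilde{b}_{0}(\eta,\tilde{\eta})$ with $\tilde{b}_{0}$ as in \eqref{Strong_C}, requiring strong convexity on all of $\mathfrak{G}$ amounts to demanding this inequality for every $(x_{1},x_{2})\in[-3,3]^{2}$ simultaneously.

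First, I would use the expression of the gravitational wind on $\mathfrak{G}$ provided by \eqref{GGGG} together with \eqref{Q}, which gives $\|\mathbf{G}^{T}(x_{1},x_{2})\|_{h}=\bar{g}\,\mathcal{A}(x_{1},x_{2})$, where $\mathcal{A}(x_{1},x_{2})=\sqrt{q/(q+1)}$ with $q$ expressed in terms of $\rho_{1},\rho_{2},\rho_{3}$. Thus the pointwise condition becomes $\bar{g}\,\mathcal{A}(x_{1},x_{2})<\tilde{b}_{0}(\eta,\tilde{\eta})$ at every $(x_{1},x_{2})$ in the square. Taking the supremum over the compact set $[-3,3]^{2}$, and noting that $\mathcal{A}$ is continuous and so attains its maximum $m$ there (the extremum point having already been located approximately in the discussion preceding the lemma), the uniform condition reduces to $\bar{g}\,m<\tilde{b}_{0}(\eta,\tilde{\eta})$, i.e.\ $\bar{g}<\tilde{b}_{0}(\eta,\tilde{\eta})/m$.

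Finally, I would split according to the region to which $(\eta,\tilde{\eta})$ belongs, substituting the two expressions of $\tilde{b}_{0}$ from \eqref{Strong_C}: on $\mathcal{D}_{1}\cup\mathcal{D}_{2}$ this yields $\bar{g}<\tfrac{1}{m(1-\tilde{\eta})}$, and on $\mathcal{D}_{3}\cup\mathcal{D}_{4}$ it yields $\bar{g}<\tfrac{1}{2m|\eta-\tilde{\eta}|}$, which are precisely the two branches of $\delta_{2}(\eta,\tilde{\eta})$ stated in the lemma. The converse implication is immediate: if $\bar{g}<\delta_{2}(\eta,\tilde{\eta})$, then $\|\mathbf{G}^{T}\|_{h}\leq \bar{g}\,m<\tilde{b}_{0}(\eta,\tilde{\eta})$ everywhere, so \cref{Theorem1} guarantees strong convexity throughout $\mathfrak{G}$.

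There is essentially no serious obstacle here — the argument is a uniformization of the pointwise strong convexity criterion over the compact domain. The only mildly delicate point is verifying that the supremum $m=\max_{[-3,3]^{2}}\mathcal{A}$ is indeed attained and is finite (which follows from continuity of $\mathcal{A}$ on a compact set), and that the maximum occurs at an interior regular point (so that $q>0$ and the bound $\tilde{b}_{0}/m$ is sharp rather than vacuously satisfied). The numerical approximation $m\approx 0.653$ at $(x_{1},x_{2})\approx(0.652,1.272)$ already mentioned in the text supplies this verification and makes the statement effective.
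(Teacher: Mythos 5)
Your proposal is correct and follows essentially the same route as the paper, which treats the lemma as an immediate consequence of the general strong convexity criterion $||\mathbf{G}^{T}||_{h}<\tilde{b}_{0}$ from \eqref{Strong_C}, combined with $||\mathbf{G}^{T}||_{h}=\bar{g}\,\mathcal{A}(x_{1},x_{2})$ and the attained maximum $m$ of $\mathcal{A}$ on the compact square $[-3,3]^{2}$. The uniformization over the compact domain and the case split according to $\mathcal{D}_{1}\cup\mathcal{D}_{2}$ versus $\mathcal{D}_{3}\cup\mathcal{D}_{4}$ are exactly what the paper intends.
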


In the sequel, we can write the $\tilde{F}_{\eta \tilde{\eta}}$%
-geodesic equations which correspond to $\mathfrak{G}$. According to %
\cref{Theorem2} and \cref{Prop5} the time geodesics $\gamma
(t)=(x_{1}(t),x_{2}(t))$ on the slippery $(\eta ,\tilde{\eta})$-slope of the
surface $\mathfrak{G}$ are provided by the solutions of the ODE system 
\begin{equation}
\ddot{x}_{i}+f_{x_{i}}r_{00}+\frac{2\dot{x}_{i}}{\alpha }\left[\mathit{%
\tilde{\Theta}}(r_{00}+2\alpha ^{2}\tilde{R}r)+\alpha \mathit{\tilde{\Omega}}%
r_{0}\right] +\frac{2f_{x_{i}}}{q+1}\left[ \mathit{\tilde{\Psi}}
(r_{00}+2\alpha ^{2}\tilde{R}r)+\alpha \mathit{\tilde{\Pi}}r_{0}\right]
-2\alpha ^{2}\tilde{R}r^{i}=0,  \label{GGG_bell}
\end{equation}%
$i=1,2,$ where $\mathit{\tilde{\Theta}},$ $\tilde{R},$ $\mathit{\tilde{\Omega}},$ $\mathit{\tilde{\Pi}}$ and $\mathit{\tilde{\Psi}}$ are given by \eqref{geo_tilde}, $q$ by \eqref{Q}, $r^{i}$, $r,$ $r_{0}$ and $r_{00}$ by \eqref{ABRR} with $x_{1}$ and $x_{2}$ instead of $x^{1}$ and $x^{2}$,
respectively, and everywhere in \eqref{GGG_bell}, $x_{1}=x_{1}(t)$, $x_{2}=x_{2}(t)$.

For example, we consider time geodesics and time fronts for $t\in\{1, 2\}$ for the case $\eta=0.7$ and $\tilde{\eta}=0.8$ on the surface $\mathfrak{G}$, where $\bar{g}=0.76$ which corresponds to $||\mathbf{G}^{T}||_{h}<0.5$. The graphical outcome is presented in \cref{3hills}. Remark that the strong convexity condition implies $||\mathbf{G}^{T}||_{h}<5$ for the $(0.7, 0.8)$-slope (see \cref{3hills_winds}). Thus, the maximum value of the rescaled gravitational acceleration can be relaxed, that is, $\bar{g}<\frac{5}{\approx0.653}\approx 7.658$. Moreover, having projected the obtained time geodesics and time fronts for the $(0.7, 0.8)$-slope onto the horizontal plane, the above scenario on the color-coded maps including the gravitational wind force $||\mathbf{G}^{T}||_{h}$ (the steepness of the mountain $\mathfrak{G}$ as well)  are shown in \cref{3hills_0}; $\bar{g}=0.76$. 
It is worth mentioning that the results presented in this subsection can also be applied for the analogous models of a sinkhole or a valley, where the gravity effect is reverse, that is, with the uphill action of the vector field along the surface gradient; see, e.g. the right-hand side image in \cref{3hills} in this regard. 

\begin{figure}[h!]
\centering
\includegraphics[width=0.47\textwidth]{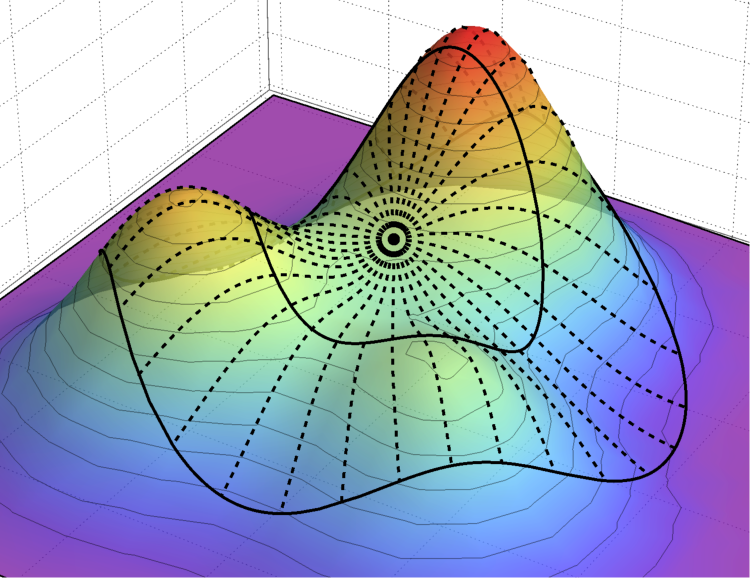}
~\includegraphics[width=0.038\textwidth]{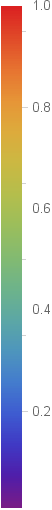}
\includegraphics[width=0.47\textwidth]{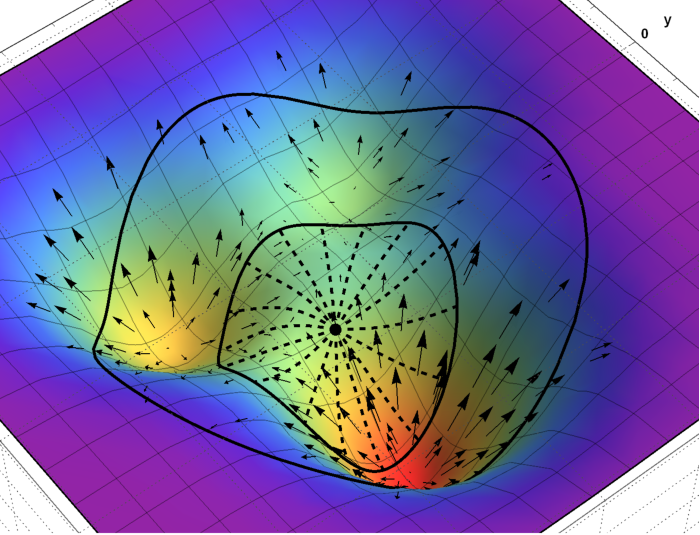}
\caption{Left: The time geodesics (dashed black, with a step of $\Delta\protect\theta=\protect\pi/16$, i.e. 32 optimal paths)  together with the related time fronts for $t=1$ (inner, solid black) and $t=2$ (outer, solid black) centered at $(0, 0)$ on the slippery slope of the mountain $\mathfrak{G}$, where $\eta=0.7$ and $\tilde{\eta}=0.8$, being color-coded with respect to its height measured from the horizontal plane $z=0$. Right: as on the left, in inside and reversed view of the surface $\mathfrak{G}$, where the gravitational wind $\mathbf{G}^{T}$ (black arrows) blows in the steepest ``uphill'' direction; $\bar{g}=0.76$.}
\label{3hills}
\end{figure}

Next, we compare time fronts corresponding to all types of $(\eta, \bar{\eta})$-slopes distinguished in our study, including their projections on the color-coded maps ($z=0$) of both height and gravitational wind force (the steepness of the $\mathfrak{G}$-slopes) on the triple Gaussian bell-shaped hill. This is done in \cref{3hills_2}. The mutual relations between the $(\eta, \bar{\eta})$-isochrones are analogous to the inclined plane and in line with the findings obtained in the previous studies on the generalization of the Matsumoto's slope-of-a-mountain problem (\cite{slippery, cross, slipperyx}). 
 \begin{figure}[h!]
\centering
\includegraphics[width=0.55\textwidth]{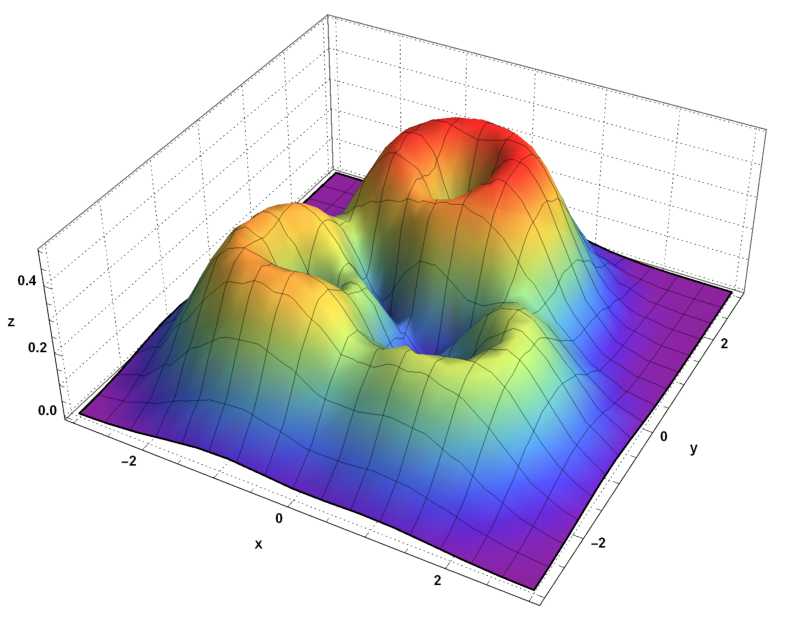}
\includegraphics[width=0.44\textwidth]{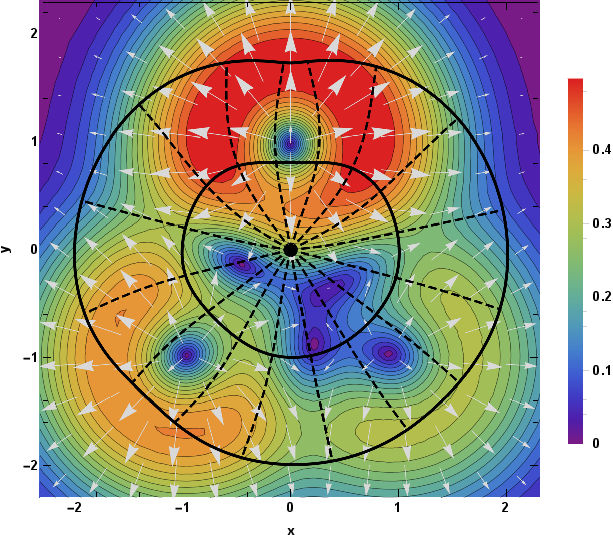}
\caption{Left: The color-coded graph of the gravitational wind force $||\mathbf{G}^{T}||_{h}$, blowing in the steepest downhill directions, i.e. along the negative surface gradient, where $\bar{g}=0.76$. This also describes the steepness of the mountain slopes expressed by the norm of the position-dependent $\mathfrak{G}$-surface gradient.   
Right: The time fronts for $t=1$ (inner, solid black) and $t=2$ (outer, solid black) centered at $(0, 0)$, together with the related time geodesics (dashed black, drawn with a step of $\Delta\protect\theta=\protect\pi/8$, i.e. 16 time-minimizing paths) on the $(0.7, 0.8)$-slope of the mountain. In the background, the flat map which is color-coded with respect to the gravitational wind force (the steepness) on $\mathfrak{G}$ as on the left, including the vector field showing the action of the gravitational wind (indicated by the grey arrows).}
\label{3hills_0}
\end{figure}

Note that in order to compare all types of the slippery slopes on $\mathfrak{G}$ the strong convexity conditions for the most restrictive cases, i.e. CROSS and MAT require that $\bar{g}<~0.766$. Therefore, we applied $\bar{g}=0.76$ in all comparative expositions presented. We remark that it is more stringent for the surface $\mathfrak{G}$ than the inclined plane investigated in the preceding subsection, where $\bar{g}<\sqrt{5}/2\approx1.118$.

\begin{figure}[h!]
\centering
\includegraphics[width=0.50\textwidth]{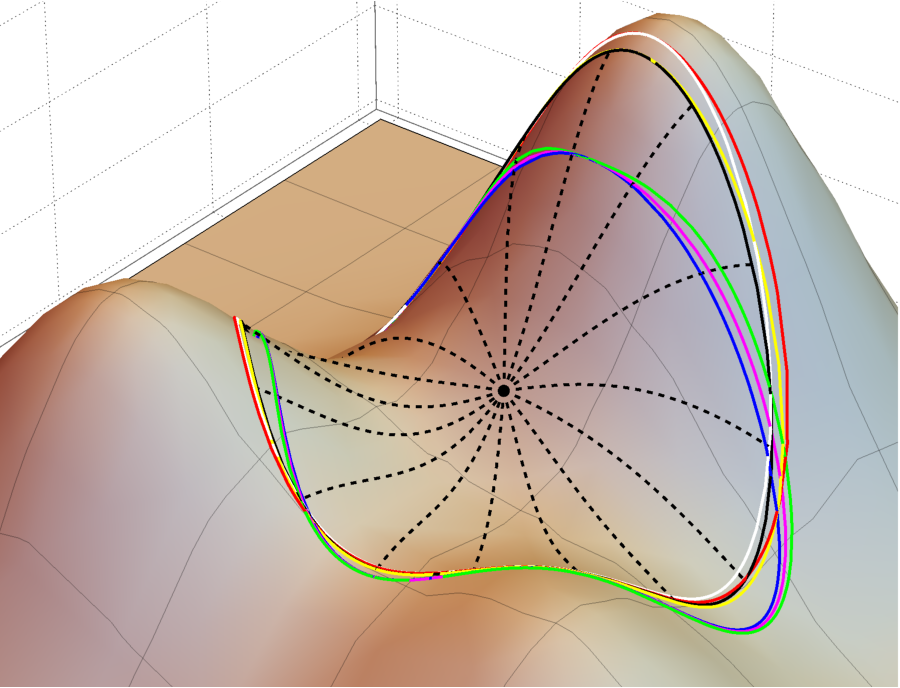}
\includegraphics[width=0.49\textwidth]{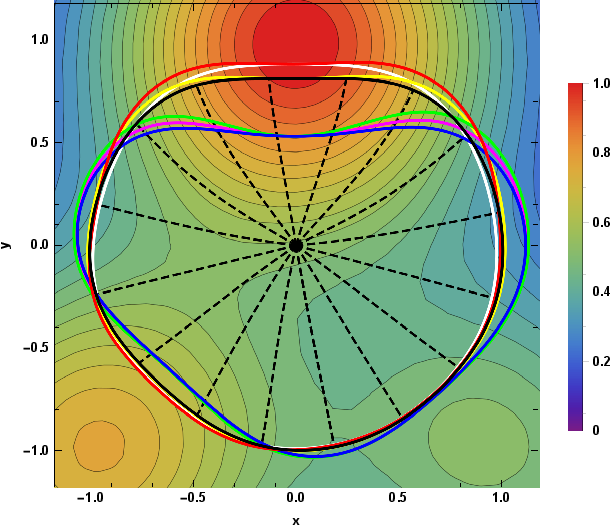}  \\~\\
\includegraphics[width=0.51\textwidth]{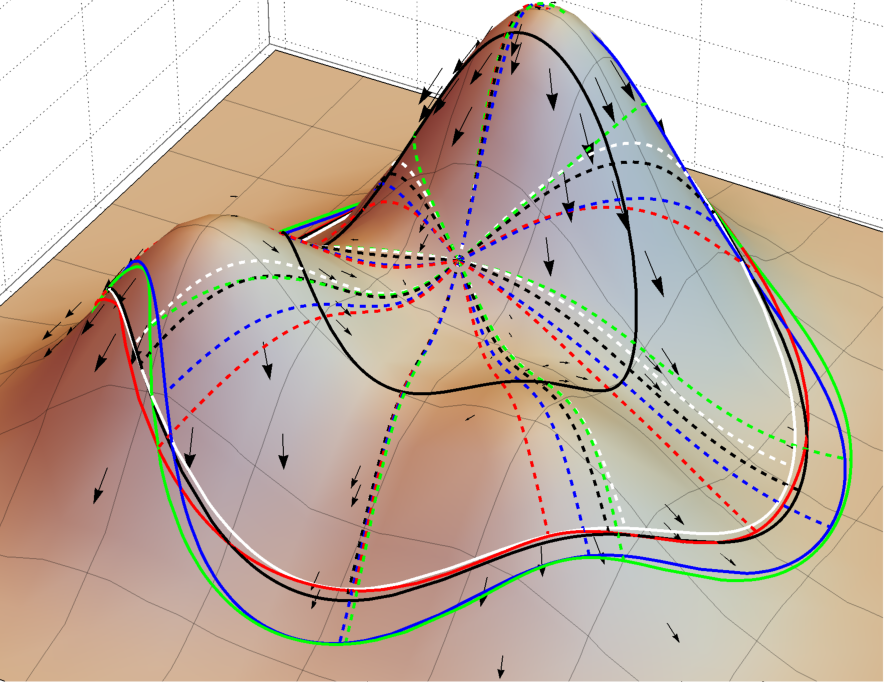}
\includegraphics[width=0.48\textwidth]{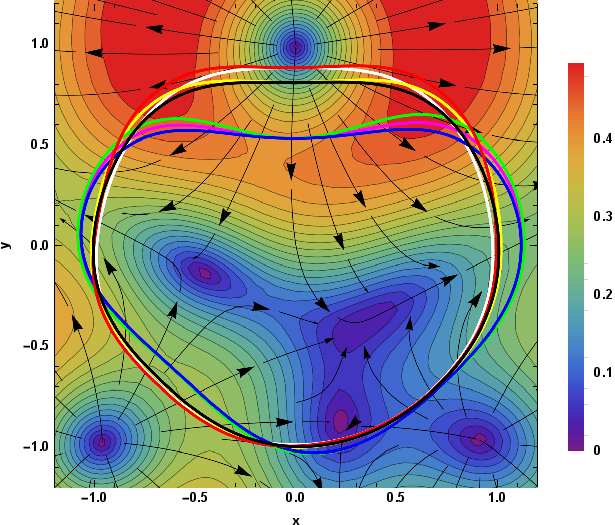}
\caption{The triple Gaussian bell-shaped hill $\mathfrak{G}$ (left) and its color-coded maps of height (top right) and the gravitational wind force (also presenting the steepness of the slope; bottom right), including time fronts centered at $(0, 0)$ for the cases: MAT (green), ZNP (blue), RIEM (white), SLIPPERY (with $\eta=0.7$, magenta), S-CROSS (with $\tilde{\eta}=0.8$, yellow), CROSS (red) and $(0.7, 0.8)$-slope (black), as well as the related time geodesics (dashed colours, respectively), where $t=1$ (top left) and $t=2$ (bottom left; in addition, $(0.7, 0.8)$-case for $t=1$); $\bar{g}=0.76$. The time geodesics are drawn with a step of $\Delta\protect\theta=\protect\pi/8$ (16 paths) for $t=1$ and $\Delta\protect\theta=\protect\pi/4$ (8 paths) for $t=2$. The action of the gravitational wind is indicated by the black arrows. The projections of the time fronts presented on the maps are for $t=1$.}
\label{3hills_2}
\end{figure}

Furthermore, the effect of a variable gravitational wind force by changing the rescaled acceleration of gravity $\bar{g}$ on behavior of the unit time front is pointed out on the slippery slope of $\mathfrak{G}$  in \cref{3hills_winds}, where the initial point is located now on the hillside, i.e. $(1, 0)$ and both traction coefficients are fixed, i.e. $\eta=0.7$ and $\tilde{\eta}=0.8$. The related unit time fronts are presented for $\bar{g}\in\{0.76\  (\textnormal{black}), 3\  (\textnormal{magenta}), 5 \ (\textnormal{orange}), 7.65\  (\textnormal{yellow})\}$. Recall that the condition for strong convexity for the $(0.7, 0.8)$-slope yields $\bar{g}<\approx 7.658$. The deformations are also compared on the related map of height in \cref{3hills_winds} (right), where the action of gravitational wind is marked by grey arrows. As expected, if $\mathbf{G}^{T}$-force increases, then the isochrone on the hill slope is in particular ``shifted'' downhill, roughly speaking.  

\begin{figure}[h!]
\centering
\includegraphics[width=0.55\textwidth]{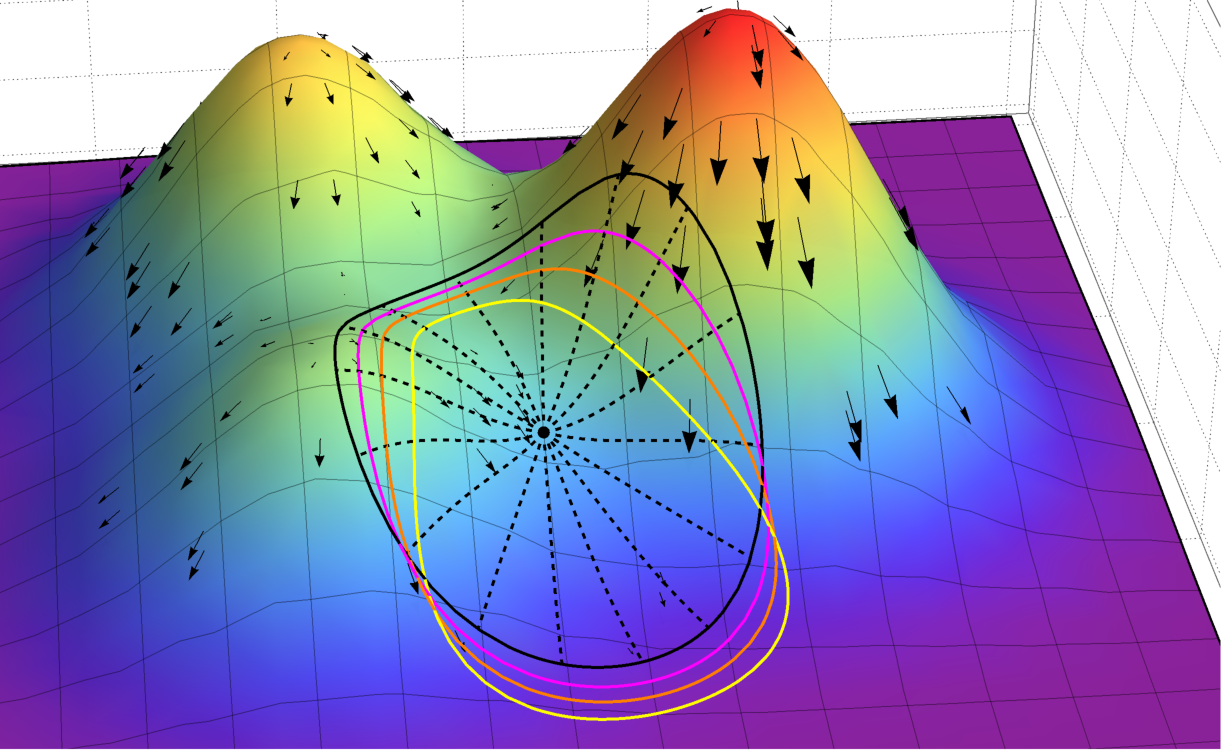}
\includegraphics[width=0.036\textwidth]{map_1_bar.eps}
~\includegraphics[width=0.39\textwidth]{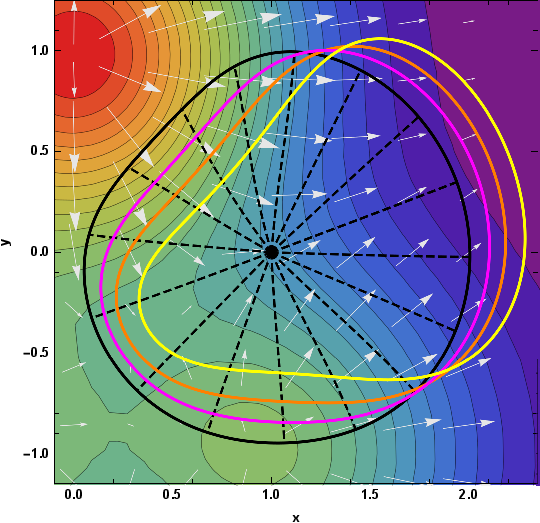}\\~\\
\caption{Left: The evolution of the unit time front on the slippery slope of the surface $\mathfrak{G}$ with respect to the variable force of the gravitational wind (due to changing the rescaled acceleration of gravity $\bar{g}$), where $\bar{g}\in\{0.76\  (\textnormal{black}), 3\  (\textnormal{magenta}), 5 \ (\textnormal{orange}), 7.65\  (\textnormal{yellow})\}$. The initial point is located at $(1, 0)$ and the traction coefficients are fixed, i.e. $\eta=0.7$ and $\tilde{\eta}=0.8$. The corresponding time geodesics in the initial setting ($\bar{g}=0.76$) are presented in dashed black and drawn with a step of $\Delta\protect\theta=\protect\pi/8$ (16 paths). 
Right: The scenario as on the left, presented on the color-coded map of height. The action of gravitational wind is marked by arrows in both subfigures.}
\label{3hills_winds}
\end{figure}

We end this section by comparing the minimum (inner) and maximum (outer) ranges of unit time fronts for any direction of motion on the slope under the gravitational wind ($\bar{g}=0.76$), which are presented on a flat map of the 3-hill in \cref{3hills_range} (left). The former consists of the ZNP (blue) and RIEM (white) parts of the related isochrones as well as the latter of the CROSS (red) and MAT (green) ones; cf. \cref{fig_plane_indirange} (right) for the analogous situation on the inclined plane. This kind of outcomes can be applied in particular to model a range of propagation of some natural phenomena, physical and chemical processes on the hill slopes, which depend on gravity. On the other hand, such analysis also gives information on what type of $(\eta, \tilde{\eta})$-navigation (motion) should be applied in order to minimize or maximize a requested range in arbitrary direction, distinguishing, for instance, the uphill and downhill cases. All $(\eta, \tilde{\eta})$-time fronts on the slippery slope modeled by the triple Gaussian bell-shaped hill $\mathfrak{G}$ are located between the above mentioned boundaries. For the sake of clarity, this is shown by two individuals, where the traction coefficients are equal to $(0.6, 0.4)$ - dashed black and $(0.7, 0.8)$ - solid black in \cref{3hills_range} (right). 

\begin{figure}[H]
\centering
\includegraphics[width=0.49\textwidth]{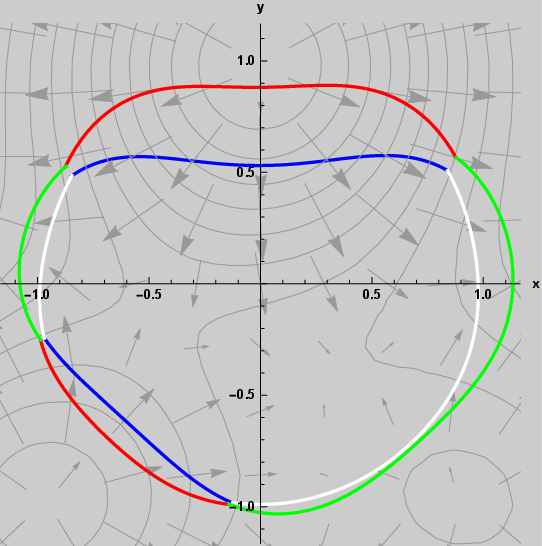}
\includegraphics[width=0.49\textwidth]{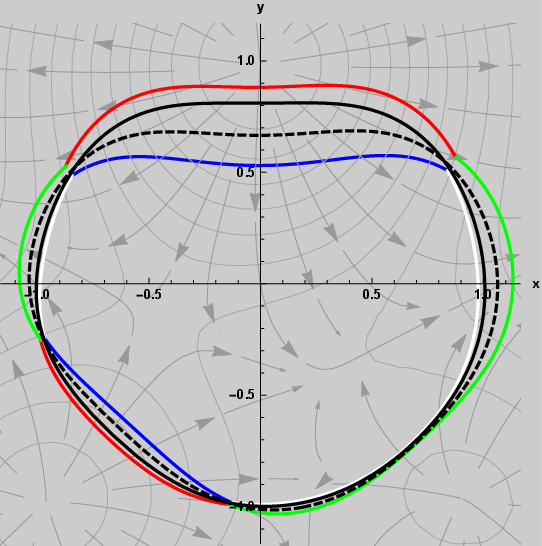}
\caption{Left: A comparison of the minimum (inner) and maximum (outer) ranges (boundaries) in any direction of unit time fronts centered at $(0, 0)$ and related to time geodesics under the gravitational wind, presented on a flat map of the triple Gaussian bell-shaped hill $\mathfrak{G}$, consisting of the ZNP (blue) and RIEM (white) parts as well as the MAT (green) and CROSS (red) parts, respectively; $t=1$. In the background, the vector field (grey arrows) representing the gravitational wind $\mathbf{G}^{T}$ `'blowing'' in the steepest downhill directions on $\mathfrak{G}$ as well as the contours of equal height. 
Right: As on the left, with two individual time fronts in addition, i.e. $(0.6, 0.4)$ - dashed black and $(0.7, 0.8)$ - solid black; $\bar{g}=0.76$. In general, all time fronts are located between the minimum (inner) boundary consisting of the ZNP (blue) and RIEM (white) cases as well as the maximum (outer) boundary consisting of the MAT (green) and CROSS (red) cases. In the background, the stream plot of the gravitational wind, indicating its action in the steepest downhill directions on $\mathfrak{G}$ by grey arrows and the contours of equal height. 
}
\label{3hills_range}
\end{figure}

\section{Conclusions}

The research presented a general model for minimum-time navigation on a slippery mountain slope under the action of gravity, which effectively unifies and extends the previous studies on the navigation problems in Finsler geometry by the same authors. The introduced two-parameter model admits both components of the gravitational wind acting in the steepest downhill direction to vary in the full ranges so that the transverse and longitudinal gravity-additives relative to the direction of motion can be counterbalanced simultaneously by the incorporated cross- and along-traction coefficients. 
The  comprehensive study established the strong convexity conditions 
 under which the resultant velocity defines a new Finsler metric, i.e. $(\eta, \tilde{\eta})$-slope metric, which turned out to be of the general $(\alpha, \beta)$-type, and determined its geodesics, which correspond to the time-minimizing trajectories between two arbitrary points on a slippery mountainside. The main result encompasses as particular cases the solutions to Matsumoto's slope-of-a-mountain problem, inducing a Matsumoto metric, as well as Zermelo's navigation problem on a Riemannian manifold under a gravitational wind, inducing a Randers metric.  
This generalization has substantial physical applicability, since it more precisely models real-world scenarios entailing movement on a slippery slope under gravity.  
Furthermore, the study contains a variety of figures and two-dimensional examples that help to explain the novel model and effectively illustrate the theory's applications to specific scenarios, such as the inclined plane and triple hill cases. This includes the evolution of time fronts and the behavior of  time geodesics in relation to various gravity effects, gravitational wind force and direction of motion, which have been thoroughly discussed. 
 
The results have been encouraging enough to merit further investigation. In particular, the model with some modifications can involve other types of the natural forces (interactions) than gravity in the context of time-optimal navigation. It would also be interesting to see whether new simpler explicit  expressions for the $(\eta, \tilde{\eta})$-slope metric can be obtained for some values of the traction coefficients (i.e. the Finsler metrics different from the classical Randers and Matsumoto types) or they can be fitted to model other natural phenomena, process or related motion effectively, similarly to the Matsumoto and Randers metrics, both of which have been extensively studied in the literature. Taking a step further, the research can be dedicated to geometric properties of the slippery slope metric, although it has been defined implicitly, as well as the extensions of the presented model in which the traction coefficients or/and the self-velocity are position-dependent in addition. Moreover, they as well as the gravitational wind can also vary over time. This is a tantalising area for future research.


\bibliographystyle{plain}

\begin{thebibliography}{99}

\bibitem{slipperyx} Aldea, N., Kopacz, P.: Time geodesics on a slippery cross slope under gravitational wind. Nonlinear Anal. Real World Appl. 81 (2025) 104177.  

\bibitem{cross} Aldea, N., Kopacz, P.: The slope-of-a-mountain problem in a cross gravitational wind. Nonlinear Anal.-Theor. 235 (2023) 113329. 

\bibitem{Nicprw} Aldea, N., Kopacz, P., Wolak, R.: Randers metrics based on
deformations by gradient wind. Period. Math. Hung. 86(3) (2023) 266-280. 

\bibitem{slippery} Aldea, N., Kopacz, P.: Time geodesics on a slippery slope under gravitational wind. Nonlinear Anal.-Theor. 227 (2023) 113160. 

\bibitem{baoroblesricci} Bao, D., Robles, C.:Ricci and flag curvatures in
Finsler geometry, in: \emph{A sampler of Riemann-Finsler geometry} (eds. D.
Bao et al.), Math. Sci. Res. Inst. Publ. 50, Cambridge Univ. Press, 2004, pp. 197-259. 

\bibitem{colleenshen} Bao, D., Robles, C., Shen, Z.: Zermelo navigation on
Riemannian manifolds. J. Diff. Geom. 66(3) (2004) 377-435. 


\bibitem{B-Miron} Buc\u{a}taru, I., Miron, R.:  Finsler-Lagrange geometry.
Applications to dynamical systems. Editura Academiei Rom\^{a}ne, Bucuresti, 2007. 

\bibitem{CJS} Caponio, E., Javaloyes, M.\'{A}., S\'{a}nchez, M.: Wind
Finslerian structures: from Zermelo's navigation to the causality of
spacetimes. Memoirs Amer. Math. Soc 300(1501) (2024). 

\bibitem{Sabau_Tai} Chansri, P., Chansangiam, P., Sab\u{a}u, S.V.: The
geometry on the slope of a mountain. Miskolc Math. Notes {21}(2) (2020) 747-762. 

\bibitem{Sabau_pedal}  Chansri, P., Chansangiam, P., Sab\u{a}u, S.V.: Finslerian indicatrices as
algebraic curves and surfaces. Balk. J. Geom. Appl. {25}(1) (2020) 19-33. 

\bibitem{chern_shen} Chern, S-.S., Shen, Z.: Riemann-Finsler Geometry. Nankai
Tracts in Mathematics. World Scientific, River Edge (N.J.), London,
Singapore, 2005.

\bibitem{Musznay} Hubicska, B., Muzsnay, Z.: Holonomy in the quantum navigation
problem. Quantum Inf. Process. 18(10) (2019) 1-10. 

\bibitem{JS} Javaloyes, M.\'{A}., S\'{a}nchez, M.: On the definition and examples
of Finsler metrics. Ann. Sc. Norm. Super. Pisa Cl. Sci. (5) 13(3) (2014) 813-858. 

\bibitem{JPS} Javaloyes, M.\'{A}., Pend\'{a}s-Recondo, E., S\'{a}nchez, M.: A
general model for wildfire propagation with wind and slope, SIAM J. Appl. Algebra Geom. 7 (2) (2023)  414-439.

\bibitem{JPS2} Javaloyes, M.\'{A}., Pend\'{a}s-Recondo, E., S\'{a}nchez, M.: Applications of cone structures to the anisotropic rheonomic Huygens' principle, Nonlinear Anal.-Theor. 209 (2021) 112337.

\bibitem{Kristaly} Krist\'{a}ly, A., Rudas, I.: Elliptic problems on the ball endowed with Funk-type metrics. Nonlinear Anal.-Theor. 119 (2015) 199-208.  

\bibitem{markvorsen} Markvorsen, S.: A Finsler geodesic spray paradigm for
wildfire spread modelling. Nonlinear Anal. RWA 28 (2016) 208-228. 

\bibitem{matsumoto} Matsumoto, M.: A slope of a mountain is a Finsler surface
with respect to a time measure, J. Math. Kyoto Univ. 29 (1989) 17-25. 

\bibitem{SH} Shen, Z.: Finsler Metrics with $\mathbf{K}=0$ and $\mathbf{S}=0$,
Can. J. Math. 55(1) (2003) 112-132. 

\bibitem{S-Sabau} Shimada, H., Sab\u{a}u, S.V.: Introduction to Matsumoto
metric. Nonlinear Anal.-Theor. 63 (2005) 165-168. 

\bibitem{Yu} Yu, C., Zhu, H.: On a new class of Finsler metrics, Diff. Geom.
Appl. 29(2) (2011) 244-254. 

\bibitem{Y-Sabau} Yoshikawa, R., Sab\u{a}u, S.V.: Kropina metrics and Zermelo
navigation on Riemannian manifolds, Geom. Dedicata 171(1) (2013) 119-148. 

\bibitem{Zer} Zermelo, E.: \"{U}ber das Navigationsproblem bei ruhender oder
ver\"{a}nderlicher Windverteilung. ZAMM-Z. Angew. Math. Me. 11(2) (1931) 114--124. 


\end{thebibliography}

\end{document}